\documentclass{article}
\usepackage[utf8]{inputenc}
\usepackage[margin=1in]{geometry}
\usepackage{amsmath} % assumes amsmath package installed
\usepackage{amsthm}
\usepackage{todonotes}
%Jared Miller's personal macros for acronyms and initialisms
\usepackage[nolist]{acronym}
\usepackage{xcolor}
\usepackage{amsmath}
\usepackage{amssymb}
\usepackage{mathtools}
\usepackage{subcaption}
\usepackage{hyperref}
\hypersetup{colorlinks=true, linkcolor=black}
\usepackage[ruled]{algorithm2e}
\usepackage{mathrsfs}
% \usepackage{tikz}

%urgent (notes for authors)

\definecolor{dkgreen}{rgb}{0,0.4,0}
\definecolor{dkred}{rgb}{0.7,0,0}

%Single Symbols
 
\newcommand{\R}{\mathbb{R}} 
\newcommand{\K}{\mathbb{K}} 
 
\newcommand{\N}{\mathbb{N}}

\newcommand{\Lie}{\mathcal{L}}

\newcommand{\M}{\mathbb{M}}

\newcommand{\A}{\mathcal{A}}

\newcommand{\bm}{\mathbf{m}}

 %corrected by MS

%Paired Commands
\DeclarePairedDelimiter{\abs}{\lvert}{\rvert}
\DeclarePairedDelimiter{\norm}{\lVert}{\rVert}
\DeclarePairedDelimiter{\ceil}{\lceil}{\rceil}
\DeclarePairedDelimiter{\floor}{\lfloor}{\rfloor}

\DeclarePairedDelimiterX{\inp}[2]{\langle}{\rangle}{#1, #2}

\DeclarePairedDelimiter{\Mp}{\mathcal{M}_+(}{)}

%Operators and problems
\DeclareMathOperator*{\argmin}{arg\!\,min}

%theorem environment
% \usepackage{amsthm}
\newtheorem{thm}{Theorem}[section]
\newtheorem{lem}[thm]{Lemma}

\newtheorem{prob}[thm]{Problem}

\newtheorem{defn}{Definition}[section]
\newtheorem{conj}{Conjecture}[section]

\newtheorem{rmk}{Remark} 
%time delay colors
\definecolor{cn}{RGB}{93,147,191}           %0
\definecolor{cz}{RGB}{233,  72, 73}      %1
\definecolor{cp}{RGB}{113, 191, 110}    %2

\newcommand{\hs}{\mathcal{H}}

\newcommand{\bmu}{\bar{\mu}}

\newcommand{\ubar}[1]{\text{\b{$#1$}}} %underbar

\title{\LARGE \bf Peak Estimation of Time Delay Systems \\ using Occupation Measures
}

\renewcommand\footnotemark{}

\author{Jared Miller$^1$, Milan Korda $^{2,3}$, Victor Magron $^{2,4}$,  Mario Sznaier$^1$
\thanks{$^1$J. Miller, and M. Sznaier are with the Robust Systems Lab,  ECE Department, Northeastern University, Boston, MA 02115. (e-mails: miller.jare@northeastern.edu, msznaier@coe.neu.edu).}
\thanks{$^2$Polynomial OPtimization  Team, LAAS-CNRS, Toulouse, 31400, France}
\thanks{$^3$Faculty of Electrical Engineering, Czech Technical University in Prague, Technick\'a 2, CZ-16626 Prague, Czech Republic}
\thanks{$^4$Institut de Math\'{e}matiques de Toulouse, 31062, France}
%Don't know if these are still valid
\thanks{J. Miller and M. Sznaier were partially supported by NSF grants   ECCS--1808381 and CNS--2038493, AFOSR grant FA9550-19-1-0005, and ONR grant N00014-21-1-2431.  
% J. Miller was partially supported by the Chateaubriand Fellowship.
J. Miller was in part supported by the Chateaubriand Fellowship (performed at LAAS-CNRS) of the Office for Science \& Technology of the Embassy of France in the United States, and by the International Student Exchange Program from AFOSR.}
\thanks{The work of  M. Korda and V. Magron was partly supported by the European Union's Horizon 2020 research and innovation programme under the Marie Sklodowska-Curie Actions, grant agreement 813211 (POEMA). The work of M. Korda was also partly supported by the Czech Science Foundation (GACR) under contract No. 20-11626Y, and by the AI Interdisciplinary Institute (ANITI) funding, through the French "Investing for the Future PIA3" program under the Grant  agreement ANR-19-PI3A-0004.
}}

\begin{document}

\maketitle
%Optionally disable page numbers by uncommenting the 'empty' lines:
% \thispagestyle{empty}
% \pagestyle{empty}

%%%%%%%%%%%%%%%%%%%%%%%%%%%%%%%%%%%%%%%%%%%%%%%%%%%%%%%%%%%%%%

\begin{abstract}
This work proposes a method to compute the maximum value obtained by a state function along trajectories of \iac{DDE}. An example of this task is finding the maximum number of infected people in an epidemic model with a nonzero incubation period. The variables of this peak estimation problem include the stopping time and the original history (restricted to a class of admissible histories). The original nonconvex \ac{DDE} peak estimation problem is approximated by an infinite-dimensional \ac{LP} in occupation measures, inspired by existing measure-based methods in peak estimation and optimal control.
This \ac{LP} is approximated from above by a sequence of \acp{SDP} through the moment-\ac{SOS} hierarchy. Effectiveness of this scheme in providing peak estimates for \acp{DDE} is demonstrated with provided examples.
\acresetall
\end{abstract}

% This work proposes a method to compute the maximum value obtained by a state function along trajectories of a Delay Differential Equation (DDE). An example of this task includes finding the maximum number of infected people in an epidemic model with a nonzero incubation period. The variables of this peak estimation problem include the stopping time and the original history (restricted to a class of admissible histories). The original nonconvex DDE peak estimation problem is approximated by an infinite-dimensional Linear Program (LP) in occupation measures, inspired by existing measure-based methods in peak estimation and optimal control. This LP is approximated from above by a sequence of Semidefinite Programs through the moment-Sum-of-Squares hierarchy. Effectiveness of this scheme in providing peak estimates for DDEs is demonstrated with provided examples.

\section{Introduction}
\label{sec:introduction}

% \urg{Introduce the paper}
This paper presents an algorithm to upper bound extreme values of a state function attained along trajectories of a \ac{DDE}. 
 The dynamics of a \ac{DDE} depend on a history of the state, in contrast to an \ac{ODE} in which the dynamics are a function only of the present values of state \cite{hale1971functional,kuang1993delay,bellen2013numerical,fridman2014intro}. This paper will involve analysis of \acp{DDE} in a state space $X \subset \R^n$ over a time horizon $T < \infty$ with a single fixed discrete bounded delay $\tau \in (0, T)$.

% Analysis in this paper will be restricted to continuous-time $n$-state dynamics in $r$ discrete bounded time delays $0 = \tau_0 < \tau_1 < \ldots < \tau < \infty$ with dynamics,
% \begin{equation}
% \label{eq:delay_dynamics}
%     \dot{x}(t) = f(t, x(t), x(t-\tau_1), x(t - \tau_2), \; \ldots, \  x(t - \tau)).
% \end{equation}
% Dynamics in \eqref{eq:delay_dynamics} are an instance of a functional differential equation in which the state evolution $\dot{x}(t)$ at time $t$ also depends on the prior states in times $[t-\tau, t)$.
 Trajectory evolution of \iac{DDE}  depends on an initial history $x_h: [-\tau, 0] \rightarrow X$ rather than simply an initial condition $x_0 \in X$ for a corresponding \ac{ODE}. The  evaluation at time $t$ for a trajectory starting with a history $x_h$ will be denoted as $x(t \mid x_h)$. 
 A function class $\mathcal{H}$ of histories may be defined, allowing for the definition of differential inclusions of \acp{DDE}. 
 % An example of one such class $\hs$ is a set of histories whose graph $\{(t, x_h(t)) \mid t \in [-\tau, 0]\}$ is contained in a given subset of $[-\tau, 0] \times X$.
A peak estimation problem may be defined on a time-delay system to find the maximum value of a state function $p$ along system trajectories given a class of initial histories $\mathcal{H}$ as
% The initial state histories are contained in the region $H_0$.
\begin{subequations}
\label{eq:peak_delay_traj}
    \begin{align}
    P^* = & \sup_{t^* \in [0, T], \; x_h(\cdot)} p(x(t^* \mid x_h)) &\\
    & \dot{x} =  f(t, x(t), x(t-\tau)) & & \forall t \in [0, T] \label{eq:delay_dynamics} \\
    & x(t) = x_h(t) & & \forall t \in [-\tau, 0]\\
     & x_h(\cdot) \in \mathcal{H}.
    \end{align}
\end{subequations}
% \MK{Milan: The constraint $x_h(\cdot) \in \mathcal{H}$ written explicitly  improves readability I believe you don't have to speak about graph constrained everywhere.}

The variables in Problem \eqref{eq:peak_delay_traj} are the stopping time $t^*$ and the initial history $x_h$.
Problem \eqref{eq:peak_delay_traj} is a \ac{DDE} version of the (generically nonconvex) \ac{ODE} peak estimation program studied in \cite{cho2002linear, fantuzzi2020bounding}. The peak estimation task in \eqref{eq:peak_delay_traj} is an instance of \iac{DDE} \ac{OCP} with a free terminal time and a zero running (integrated) cost.

This work uses measure-theoretic methods in order to provide certifiable upper bounds on the peak value $P^*$ from \eqref{eq:peak_delay_traj}. The first application of measure-theoretic methods towards \acp{DDE} was in \cite{warga1974optimal}, in which the control input was relaxed into a Young Measure \cite{young1942generalized} (probability distribution at each point in time) \cite{warga2014optimal}. This Young-Measure-based relaxed control yields the \ac{OCP} optimal value in the case of a single discrete time delay under convexity, regularity, and compactness assumptions. However, the Young Measure control programs may result in a lower bound when there are two or more delays in the system dynamics (there exist Young-Measure solutions that do not correspond to \ac{OCP} solutions) \cite{rosenblueth1991relaxation, rosenblueth1992strongly}. Adding new measures and constraints allows for the construction of tight Young Measure \ac{OCP} approximations 
% \MK{Milan: Same here} 
at the cost of significantly more complicated programs \cite{rosenblueth1992proper}. 
% Further information about Young Measure relaxation procedure for \acp{DDE} and more general functional differential equations is available at \cite{warga2014optimal}.

Occupation measures are nonnegative Borel measures that contain all possible information about trajectory behavior, and are a step beyond Young Measures in terms of abstraction and relaxation. The work of \cite{lewis1980relaxation} proves that a convex infinite-dimensional \ac{LP} in occupation measures for an \ac{ODE} \ac{OCP} has the same optimal value as the original \ac{OCP} under compactness, convexity, and regularity conditions. The problem of estimation of the peak of the expected value of a given state function for stochastic processes may be solved using occupation measures under these same conditions \cite{cho2002linear}. The Moment-\ac{SOS} hierarchy offers a sequence of outer approximations (lower bounds on \ac{OCP}/upper bounds on peak estimates) as found through solving \acp{SDP} of increasing size \cite{lasserre2009moments}. The moment-\ac{SOS} hierarchy has been applied to dynamical problems including barrier functions \cite{prajna2004safety}, \acp{OCP}  \cite{henrion2008nonlinear,papa2009sosdelay}, peak estimation \cite{fantuzzi2020bounding, miller2020recovery}, region of attraction estimation \cite{korda2013inner}, reachable set estimation \cite{magron2017discrete}  and distance estimation \cite{miller2022distance_short}. 

Use of the moment-\ac{SOS} hierarchy towards analysis of \acp{DDE} includes finding stability and safety certificates \cite{papachristodoulou2005tutorial, prajna2005methods, papa2009sosdelay}.
Prior work on using occupation measures for problems in time delays includes \ac{ODE}-PDE models in \cite{marx2018entropy, korda2018momentspde}, a Riesz-frame system in \cite{magron2020optimal}, and a gridded \ac{LP} framework for optimal control given a single history $x_h$ in \cite{barati2012optimal}. Peak estimation has been conducted on specific time-delay systems such as the forced Li\'{e}nard model \cite{suresh2018forced} and compartmental epidemic models \cite{sadeghi2021universal}.

The contributions of this paper are:
\begin{itemize}
    \item A theory of \ac{MV}-solutions to \acp{DDE} with multiple histories (in $\hs$) and free terminal time
    \item A measure \ac{LP} that upper-bounds problem \eqref{eq:peak_delay_traj}
    \item A convergent sequence of \acp{LMI} (and resultant \acp{SDP}) to the measure upper-bound
\end{itemize}

This paper is organized as follows: Section \ref{sec:preliminaries} formalizes notation and summarizes concepts in measure theory, time-delay, occupation measures, and \ac{ODE} peak estimation. Section \ref{sec:peak_lp} defines \ac{MV}-solutions for free-terminal-time \ac{DDE} trajectories to create a primal-dual pair of \acp{LP} to upper-bound \eqref{eq:peak_delay_traj}.
Section \ref{sec:moment} applies the Moment-\ac{SOS} hierarchy towards generating \acp{SDP} to upper-bound the peak-estimation measure \ac{LP}. 
Section \ref{sec:delay_examples} provides examples of \ac{DDE} peak estimation. Section \ref{sec:extensions_delay} extends the \acp{DDE} peak framework by allowing for distance estimation, shaping constraints on histories, and uncertainty. Section \ref{sec:conclusion} concludes the paper. Appendix  \ref{app:delay_structure} extends the \ac{MV}-solution framework towards continuous-time systems with proportional delays and discrete-time systems with long time delays.  Appendix \ref{app:strong_duality_delay} performs the proof of strong duality for the \acp{DDE} peak estimation \acp{LP}. Appendix \ref{app:ocp} finds and analyzes structural properties of \ac{DDE} \acp{OCP} subvalue functionals. Appendix \ref{app:more_accurate} reduces conservatism of \ac{OCP} approximations by performing spatio-temporal partitioning and applying double-integral subvalue functionals. Appendix \ref{app:joint_component} introduces a more conservative but computationally simpler notion of \ac{MV}-solution for \acp{DDE}.

\section{Preliminaries}
\label{sec:preliminaries}
% \subsection{Acronyms/Initialisms}
% acronyms.tex
% below is the list of acronym definitions, place them in alphabetical order
% since they will not be sorted again. 
\begin{acronym}

\acro{BSA}{Basic Semialgebraic}

\acro{DDE}{Delay Differential Equation}

\acro{HJB}{Hamilton-Jacobi-Bellman}

\acro{LMI}{Linear Matrix Inequality}
\acroindefinite{LMI}{an}{a}
\acroplural{LMI}[LMIs]{Linear Matrix Inequalities}

\acro{LP}{Linear Program}
\acroindefinite{LP}{an}{a}
% \acro{OCP}{Optimal Control Problem}

\acro{MV}{Measure-Valued}

\acro{OCP}{Optimal Control Problem}
\acroindefinite{OCP}{an}{a}

\acro{ODE}{Ordinary Differential Equation}
\acroindefinite{ODE}{an}{a}

\acro{PC}{Piecewise Continuous}

% \acro{PDE}{Partial Differential Equation}

\acro{PSD}{Positive Semidefinite}

\acro{SDP}{Semidefinite Program}
\acroindefinite{SDP}{an}{a}
% \acro{SIR}{Susceptible, Infected, Removed}

% \acro{SOCP}{Second-Order Cone Program}

\acro{SOS}{Sum of Squares}
\acroindefinite{SOS}{an}{a}

\end{acronym}

\subsection{Notation}

The $n$-dimensional real Euclidean vector space is $\R^n$. The set of natural numbers is $\N$, and the set of $n$-dimensional multi-indices is $\N^n$. The degree of a multi-index $\alpha \in \N^n$ is $\abs{\alpha} = \sum_{i=1}^n \alpha_i$.
The set of polynomials with real coefficients in an indeterminate $x$ is $\R[x]$. Each polynomial $p(x) \in \R[x]$ has a unique representation in terms of a finite index set $\mathcal{J} \subset \N^n$ and coefficients $\{p_\alpha\}_{\alpha \in \mathcal{J}}$ with $p_\alpha \neq 0$ as $p(x) = \sum_{\alpha \in \mathcal{J}} p_\alpha \left(\prod_{i=1}^n x_i^{\alpha_{i}}\right) = \sum_{\alpha \in \mathcal{J}} p_\alpha x^\alpha$. The degree of a polynomial $\deg p(x)$ is equal to $\max_{\alpha \in \mathcal{J}} \abs{\alpha_j}$. The subset of polynomials with degree at most $d$ is $\R[x]_{\leq d} \subset \R[x]$.

\subsection{Analysis and Measure Theory}
Let $X$ be a topological space.
The set of continuous functions over a space $X$ is $C(X)$, and its subcone of nonnegative functions over $X$ is $C_+(X)$. The subset of once-differentiable functions over $X$ is $C^1(X) \subset C(X)$. A single-variable function $g(t)$ is \ac{PC}  over the domain $[a, b]$ if there exist $B \in \N \backslash \{0\}$ and a finite number of time-breaks $t_0 = a < t_1 < t_2 <  \cdots < t_{B} < b = t_{B+1}$ such that the function $g(t)$ is continuous in each interval $[t_k, t_{k+1})$ for $k = 0..B$.
The class of \ac{PC} functions from the time interval $[-\tau, 0]$ to $X$ is $PC([-\tau, 0], X)$.

The set of nonnegative Borel measures over $X$ is $\Mp{X}$. A pairing exists between functions $p \in C(X)$ and measures $\mu \in \Mp{X}$ by Lebesgue integration with $\inp{p}{\mu} = \int_X p(x) d \mu(x)$. 
This pairing is a duality pairing and defines an inner product 
% \MK{Milan: This is typically not called inner product, just duality pairing. When  you say inner product, a (pre)-Hilbert structure is typically assumed, which we don't have here.} 
between $C_+(X)$ and $\Mp{X}$ when $X$ is compact. The $\mu$-measure of a set $A \subseteq X$ may be defined in terms of $A$'s indicator function ($I_A(x)=1$ if $x \in A$ and $I_A(x)=0$ otherwise) as $\mu(A) = \inp{I_A(x)}{\mu(x)}$. The quantity $\mu(X)$ is called the mass of $\mu$, and $\mu$ is a probability distribution if $\mu(X) = 1$. The support of $\mu$ is the set of all points $x$ such that all open neighborhoods $N_x \ni x$ satisfy $\mu(N_x) > 0$. 
Two special measures are the Dirac delta and the Lebesgue measure. The Dirac delta $\delta_{x}$ with respect to a point $x \in X$ obeys the point-evaluation pairing $\inp{p}{\delta_{x}} = p(x)$ for all $p \in C(X)$. The Lebesgue (volume) distribution has the definition $\inp{p}{\lambda_X} = \int_X p(x) dx$. Further details about measure theory are available in \cite{tao2011introduction}.

Given spaces $X$ and $Y$, the projection $\pi^x: X \times Y \rightarrow X$ is the map $(x, y) \mapsto x$.
For measures $\mu \in \Mp{X}$ and $\nu \in \Mp{Y}$, the product measure $\mu \otimes \nu \in \Mp{X \times Y}$ is the unique measure satisfying $(\mu\otimes \nu)(A \times B) = \mu(A)\nu(B)$ for all subsets $A \subseteq X, \ B \subseteq Y$. For two measures $\mu, \xi \in \Mp{X}$, the measure $\mu$ dominates $\xi$ ($\xi \leq \mu$) if $\xi(A) \leq \mu(A), \ \forall A \subset X$. To every dominated measure $\xi \leq \mu$, there exists a slack measure $\hat{\xi} \in \Mp{X}$ such that 
$\xi + \hat{\xi} = \mu$.

% and observe that $\hat\mu$ is nonnegative since $\xi \le \mu$.
% \MK{Milan: The notation $\xi \ll \mu$ is typically used to denote that $\xi$ is absolutely continuous wrt $\mu$, which is not equivalent to what's written here.} 
% \victor{It seems that this notation is never used.}
% By the Jordan decomposition, there exists a slack measure $\hat{\mu} \in \Mp{X}$ such that $\mu = \hat{\mu} + \xi$. \MK{Milan: The previous statement does not need a Jordan decomposition. Simply define $\hat \mu = \mu - \xi$ and observe that $\hat\mu$ is nonnegative since $\xi \le \mu$. }

The pushforward of a map $Q: X \rightarrow Y$ along a measure $\mu$ is $Q_\# \mu$, with the relation $\inp{z}{Q_\# \mu} = \inp{z \circ Q }{\mu}$ holding for all $z \in C(Y)$. Given $\eta \in \Mp{X \times Y}$, the projection-pushforward $\pi^x_\# \eta$ is the $x$-marginalization of $\eta$. The pairing of $p \in C(X)$ with $\pi^x_\#\eta$ may be equivalently expressed as $\inp{p}{\pi^x_\#\eta} = \inp{p}{\eta}$.
The adjoint of a linear map $\Lie: C(X)\rightarrow C(Y)$ is a mapping $\Lie^\dagger:M(Y)\to M(X)$ satisfying $\inp{\Lie p}{\nu} = \inp{p}{\Lie^\dagger \nu}$ for all $p \in C(X)$ and $\nu \in M(Y)$.

% \urg{Topics in Measures
% \begin{itemize}
%     \item Nonnegative Measure
%     \item Pushforwards and Marginalization
%     \item Absolute Continuity
% \end{itemize}}

\subsection{Time Delay Systems}

A single-variable function $g(t)$ is \ac{PC}  over the domain $[a, b]$ if there exist $B \in \N \backslash \{0\}$ and a finite number of time-breaks $t_0 = a < t_1 < t_2 <  \cdots < t_{B} < b = t_{B+1}$ such that the function $g(t)$ is continuous in each interval $[t_k, t_{k+1})$ for $k = 0..B$.
The class of \ac{PC} functions from the time interval $[-\tau, 0]$ to $X$ is $PC([-\tau, 0], X)$.

% \MK{Milan: Do we need PC in the next statement instead of Borel measurable?} 
Given a \ac{PC} state history $t \mapsto x_h(t), \ t \in [-\tau, 0]$,  a unique forward trajectory $x(t \mid x_h)$ of \eqref{eq:delay_dynamics} exists on  $t \in [0, T]$ if the function $(t,x_0,x_1) \mapsto f(t, x_0, x_1)$ is locally Lipschitz in all variables. 

% Such Lipschitz dynamics satisfy a smoothing property: the order of trajectory time-derivatives that are continuous will increase by 1 every $\tau$ time steps \cite{fridman2014intro}. 
% \MK{Milan: Any reference to the last statement?}

Trajectories of time-delay systems with the form of \eqref{eq:delay_dynamics} with $f$ locally Lipschitz satisfy a smoothing property as shown in Figure \ref{fig:increasing_continuity}. The order of derivatives that are continuous will increase by 1 every $\tau$ time steps \cite{fridman2014intro}. An example of such a time-delay system with increasing continuity  is visualized in Figure \ref{fig:increasing_continuity} with system dynamics
\begin{equation}
    x'(t) = -2x(t)-2x(t-1). \label{eq:time_delay_fig}
\end{equation} 

\begin{figure}[h]
    \centering
    \includegraphics[width=0.7\linewidth]{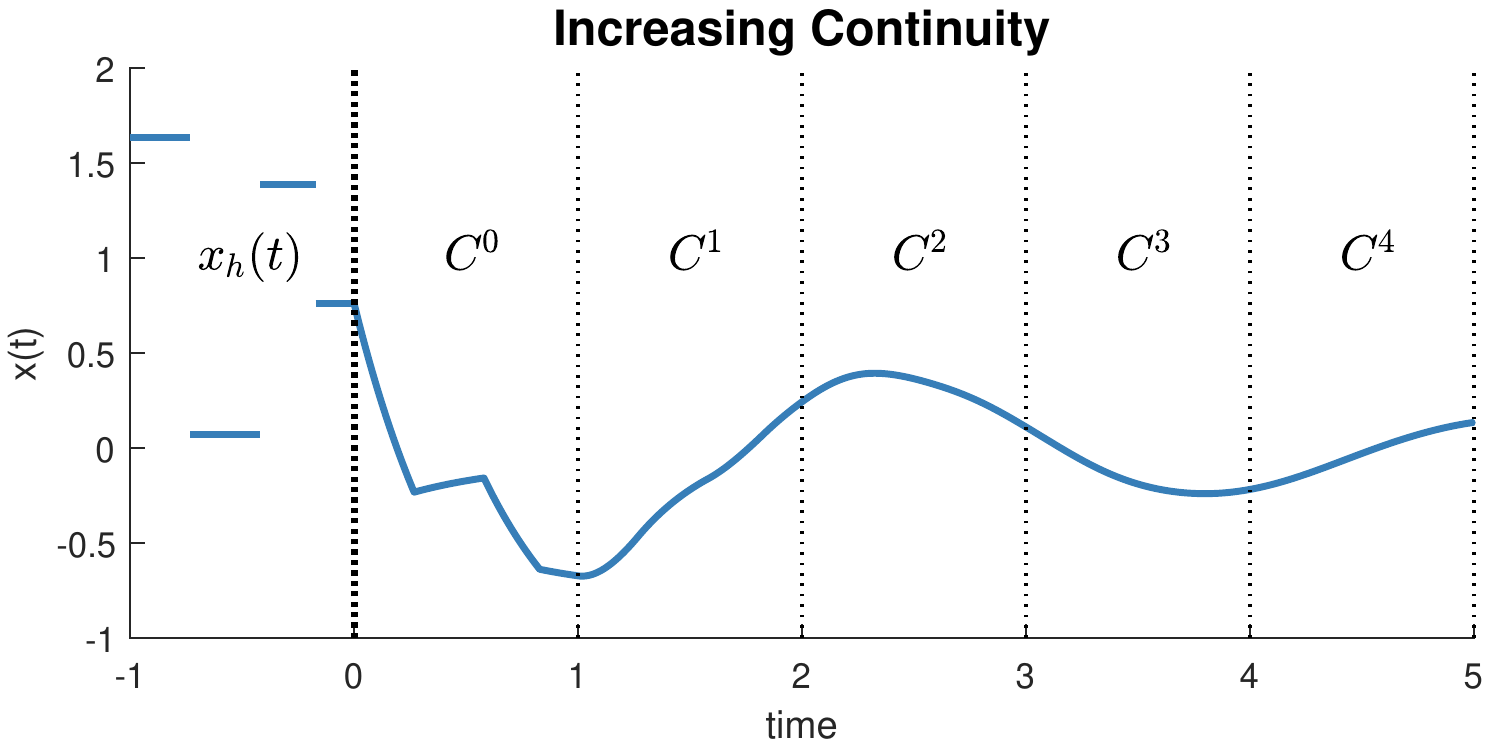}
    \caption{Continuity of \eqref{eq:time_delay_fig} trajectories increases every $\tau=1$ time step}
    \label{fig:increasing_continuity}
\end{figure}

Figure \ref{fig:same_initial} plots multiple trajectories of \eqref{eq:time_delay_fig} whose histories are lines passing through $x_h(0)=1$, but whose evolution after time $t = 0$ is different.

\begin{figure}[h]
    \centering
    \includegraphics[width=0.7\linewidth]{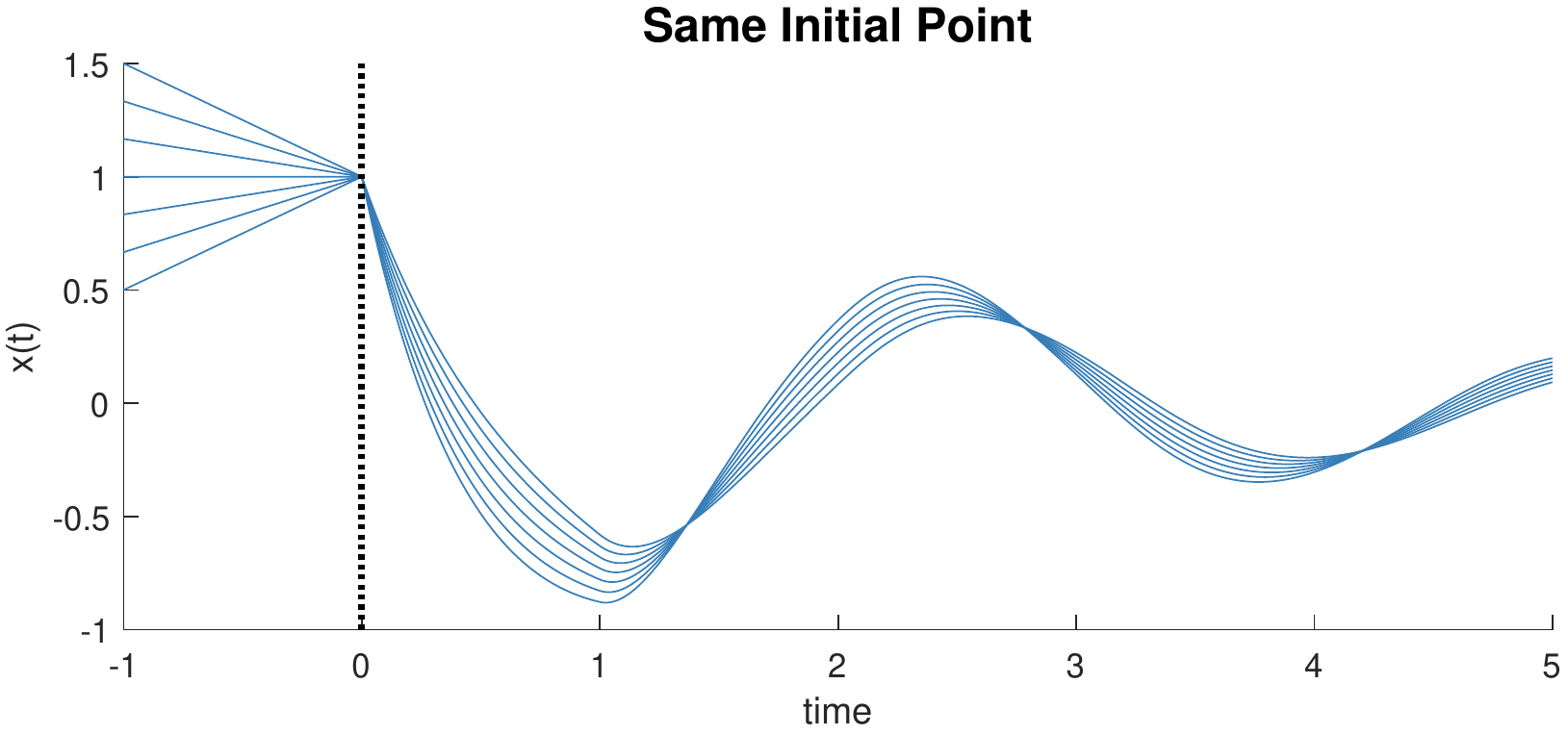}
    \caption{All histories of Figure \eqref{eq:time_delay_fig} pass through $x_h(0)=1$}
    \label{fig:same_initial}
\end{figure}

The behavior of time-delay systems may change and bifurcate as the time delays change. A well-studied example of $\dot{x} = -x(t-\tau)$ 
is plotted in Figure 
\eqref{fig:stability_switch} \cite{fridman2014intro}, 
in which the system is stable (to $x=0$) for all bounded \ac{PC} histories with $\tau \in [0, \pi/2)$, has bounded oscillations for some initial histories at $\tau=\pi/2$ (e.g., constant $x_h$ in time), and is unstable (divergent oscillations to $\pm \infty$) for all similar histories with $\tau > \pi/2$ \cite{fridman2014intro}.

\begin{figure}[h]
    \centering
    \includegraphics[width=0.7\linewidth]{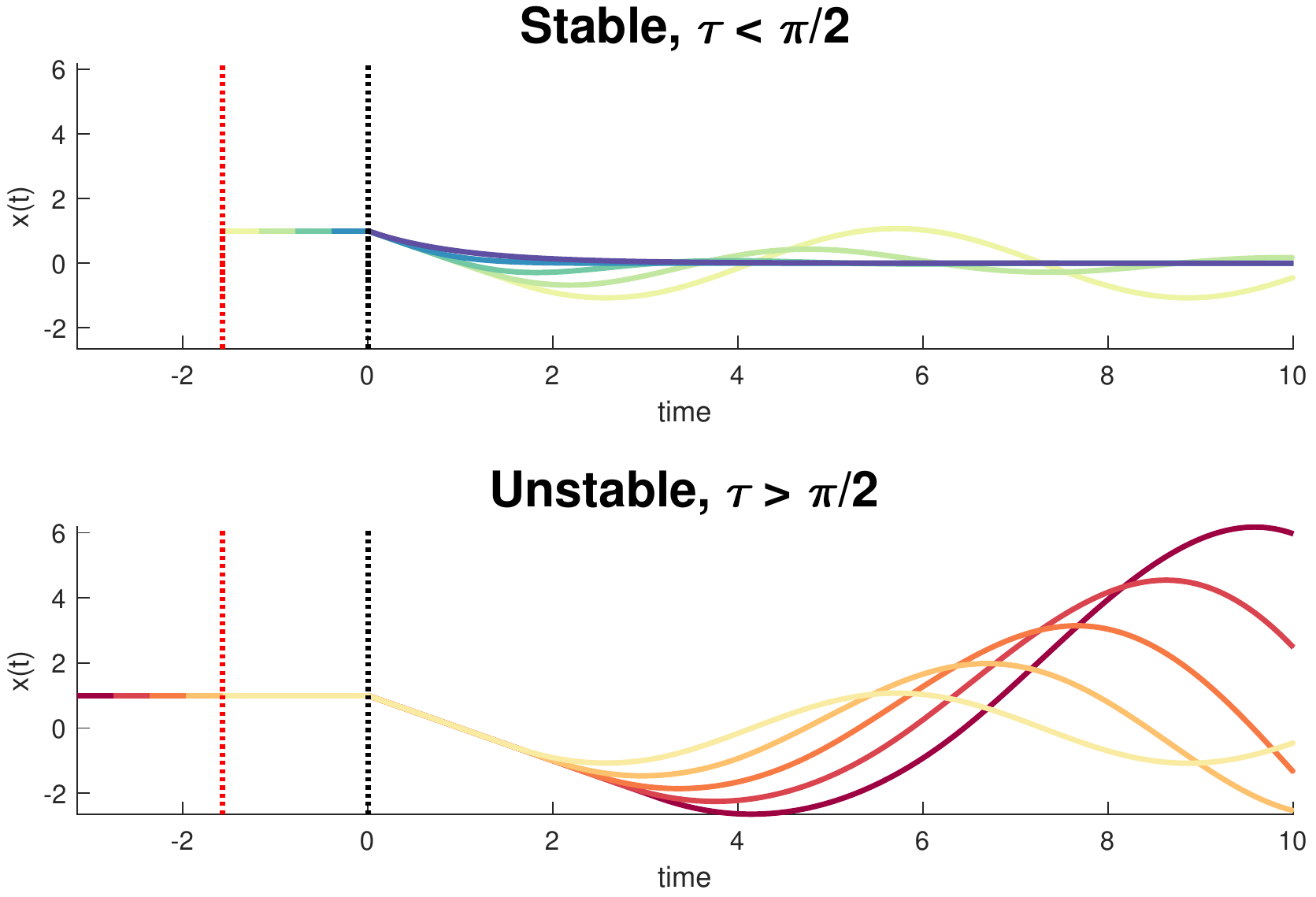}
    \caption{Bifurcation of stability as $\tau$ exceeds $\pi/2$ in $\dot{x}=-x(t-\tau)$}
    \label{fig:stability_switch}
\end{figure}

Problem \eqref{eq:peak_delay_traj} involves a class of histories $\hs$. In this paper, we will impose that $\hs$ is graph-constrained:
\begin{defn}
\label{defn:graph_constrained}
    The history class $\hs$ is \textbf{graph-constrained} if $\hs$ is the set of histories whose graph lies within a given set $H_0 \subseteq [-\tau, 0] \times X$,
\begin{align*}
    \hs = \{x_h \in PC([-\tau, 0], X) \mid (t, x_h(t)) \in H_0 \ \forall t \in [-\tau, 0]\},
\end{align*} 
and there are no other continuity restrictions on histories.
\end{defn}

\subsection{Occupation Measures}

The \textbf{occupation measure} associated with an interval $[a, b]  \subset \R$ and a curve $t \mapsto x(t) \in PC([a, b], X)$ is the pushforward of the Lebesgue distribution (in time) $\lambda_{[a, b]}$ along the curve evaluation. 
Such an occupation measure $\mu_{x(\cdot)} \in \Mp{[a, b] \times X}$ satisfies a relation for all $v \in C([a, b] \times X):$
\begin{align}
    \inp{v}{\mu_{x(\cdot)}} & = \textstyle \int_{a}^b v(t, x(t)) dt.
\end{align}
% \MK{Milan: I would suggest the notation $\mu_{x(\cdot)}$ instead of $\mu_{(t, x(t))}$.}

% \MK{Milan: There is a jump in the thought process. You never speak about control and now it pops up out of the blue.} 

Occupation measures can be extended to controlled dynamics.
Let $U \subset \R^m$ be a set of input-values and define the following controlled dynamics (in which $u(t) \in U$ for all $t \in [0, T]$) :
\begin{align}
    \dot{x}(t) &= f(t, x(t), u(t)). \label{eq:dynamics_u}
\end{align}

The  occupation measure of a trajectory of \eqref{eq:dynamics_u} given a stopping time $t^*$, an initial condition $x_0 \in X_0 \subset X$ and a measurable  control $u(\cdot)$ (such that $u(t)$ is a probability distribution over $U$ for each $t \in [0, t^*]$) for sets $A \subseteq [0, T], \ B \subseteq X, \ C \subseteq U$, is
\begin{align}   
\label{eq:occ_free_single}
    &\mu(A \times B\times C \mid t^*, x_0) = \\
    &\int_{[0, t^*] \times X_0\times U} I_{A \times B \times C}\left((t, x(t \mid x_0,  u(\cdot)), u(t)\right) dt. \nonumber
\end{align}
% \MK{Milan: $\mathcal{U}$ is not properly defined. Also, why is $u$ in a feedback form in \eqref{eq:occ_free_single}? Normally it is required to depend only on time.}

The occupation measure in \eqref{eq:occ_free_single} may be averaged over a distribution of initial conditions  $\mu_0(x_0)$:
\begin{align}
    \label{eq:avg_free_occ}
    &\mu(A \times B\times C \mid t^*) = \\
    &\int_{X_0} \mu(A \times B\times C \mid t^*, x_0) d\mu_0(x_0). \nonumber
\end{align}

% is defined with respect to 
% $\tilde{v} \in C([0, T] \times X \times U):$
% \begin{align}
%  % \label{eq:avg_free_occ}
%     \inp{\tilde{v}}{\mu_{(t, x(t), u(t))}} & = \textstyle \int_{a}^b \tilde{v}(t, x(t), u(t)) dt.
% \end{align}

% A free-terminal-time averaged occupation measure posed over a stopping time $t^* \in [0, T]$, a control $u(\cdot) \in \mathcal{U}$, and a distribution of initial conditions $\mu_0$ is,

% \urg{Is this definition correct? There must be an easier way to phrase this, such as in Section 3.1 of \cite{henrion2013convex}.}

% \MK{Define the Lie derivative $\Lie$ of a function $v \in C^1([-\tau,T]\times \R^n)$  along~\eqref{eq:dynamics_u} by}
A linear operator $\Lie_f$ may be defined for every $v \in C^1([-\tau,T]\times \R^n)$ by
\begin{equation}
\label{eq:lie}
    \Lie_f v(t, x) = \partial_t v(t,x) + f(t,x, u) \cdot \nabla_x v(t,x).
\end{equation}
% \MK{Note that $\Lie_f$ is a linear operator}. \MK{Milan: I probably wouldn't call $\Lie_f$ a Lie derivative. }
A distribution of initial conditions $\mu_0 \in \Mp{X_0}$, free-terminal-time values $\mu_p \in \Mp{[0, T] \times X}$, and occupation measures $\mu \in \Mp{[0, T] \times X \times U}$ from \eqref{eq:avg_free_occ} are connected together by Liouville's equation for all $v \in C^1([0, T] \times X)$:
\begin{subequations}
\label{eq:liou}
\begin{align}
\inp{v}{\mu_p} &= \inp{v(0, x)}{\mu_0(x)} + \inp{\Lie_f v}{\mu} \label{eq:liou_int}\\
\mu_p &= \delta_0 \otimes \mu_0 + \pi^{t x}_\# \Lie_f^\dagger \mu. \label{eq:liou_weak}
\end{align}
\end{subequations}
Equation \eqref{eq:liou_weak} is a shorthand notation for \eqref{eq:liou_int} when applied to all $C^1$  functions $v$. Note that the $\pi^{t x}_\#$ marginalizes out the input $u$ in the occupation measure $\mu$. Any $\mu$ as part of a tuple of measures $(\mu_0, \mu_p, \mu)$ satisfying \eqref{eq:liou}
is referred to as a \textbf{relaxed occupation measure}.

\section{Peak Measure Program}
\label{sec:peak_lp}
This section will formulate a measure-valued \ac{LP} which upper-bounds Problem \eqref{eq:peak_delay_traj} in objective.

\subsection{Assumptions}

The following assumptions will be imposed on the peak estimation Problem \eqref{eq:peak_delay_traj}:
\begin{enumerate}
    \item[A1] The set $[-\tau, T] \times X$ is compact with $\tau<T$;
    \item[A2] The function $f$ is Lipschitz inside $[0, T] \times X^2$;
    \item[A3] Any trajectory $x(\cdot \mid x_h)$ with $x_h \in \hs$ such that $x(t\mid x_h) \not\in X$ for some $t \in [0, T]$ also satisfies $x(t'\mid x_h) \not\in X$ for all $t' \geq t$; 
    \item[A4] The objective $p$ is continuous;
    \item[A5] The history class $\hs$ is graph-constrained by $H_0 \subset [-\tau, 0] \times X$.
        % Trajectories satisfy $x(t \mid x_h) \in X$ for all times  \\$t \in [0, T]$ and histories $x_h \in \mathcal{H}$.
\end{enumerate}

In the case where $\tau > T$, the delayed state $t \mapsto x(t-\tau)$ is fully specified in time $[0, T]$ without requiring dynamics information, and \eqref{eq:peak_delay_traj} reduces to a peak estimation problem over \acp{ODE}.
All tracked histories in $\hs$ are bounded due to assumption A1 (since the range $X$ is compact). The nonreturn assumption A3 ensures that a trajectory cannot leave and then return to $X$ to produce a lower value of $p$, given that the occupation-measure-based techniques used in this paper can only track trajectories while they are in $X$.
% Assumption A3 is a requirement of nonreturn that allows for a trajectory to escape $X$ while still leaving the solution to \eqref{eq:peak_delay_traj} well defined (when restricting to trajectories evolving in $X$). \MK{Milan: I am not sure why we need A3.}

\subsection{Measure-Valued Solution}

The initial set $X_0$ is the $t=0^+$ slice of $H_0$. Equation  \eqref{eq:weak_solution} describes the measures $(\mu_h, \mu_0, \mu_p, \bmu_0, \bmu_1, \nu)$ that will be used to form a free-terminal-time \ac{MV}-solution to the \ac{DDE} \eqref{eq:delay_dynamics} with multiple histories (in $\hs$):
\begin{subequations}
\label{eq:weak_solution}
    \begin{align}
        &\textrm{History}  & \mu_h &\in \Mp{H_0} \label{eq:weak_solution_history} \\
       & \textrm{Initial} & \mu_0 &\in \Mp{X_0} \\
       &\textrm{Peak}  &\mu_p &\in \Mp{[0, T] \times X} \\
        &\textrm{Occupation Start }  & \bmu_0 &\in \Mp{[0, T-\tau] \times X^2} \\        
        &\textrm{Occupation End }  & \bmu_1 &\in \Mp{[T-\tau, T] \times X^2} \\
        &\textrm{Time-Slack} & \nu &\in \Mp{[0, T] \times X} \label{eq:weak_solution_slack}
    \end{align}
\end{subequations}

The joint (relaxed) occupation measure $\bmu \in \Mp{[0, T] \times X^2}$ is constructed from the sum $\bmu = \bmu_0 + \bmu_1$. An \ac{MV} solution to the \ac{DDE} in \eqref{eq:delay_dynamics} is a set of measures from \eqref{eq:weak_solution} that satisfy three types of constraints: History-Validity, Liouville, Consistency.

\subsubsection{History-Validity}
\label{sec:history_validity}
The first History-Validity constraint is that $\mu_0$ should be a probability distribution over the initial state condition (at $t=0$). The second is that the history measure $\mu_h$ should represent an averaged occupation measure of histories that are defined between $[-\tau, 0]$, which implies that the $t$-marginal of $\mu_h$ should be Lebesgue-distributed. The two History-Validity constraints are,
\begin{align}
    \inp{1}{\mu_0} &= 1, &   \pi^{t}_\# \mu_h &= \lambda_{[-\tau, 0]}. \label{eq:History-Validity}
\end{align}

\subsubsection{Liouville}
The true occupation measure $(t,x_0,x_1) \mapsto \bmu(t, x_0, x_1)$ has a time $t$, a current state $x_0 = x(t \mid x_h)$, and an external input $x_1 \in X$ with $x_1(t) = x(t-\tau \mid x_h)$. 
Use of the Liouville equation in \eqref{eq:liou}  applied to the joint occupation measure $\bmu = \bmu_0 + \bmu_1$ leads to
\begin{align}
    & \mu_p = \delta_0 \otimes\mu_0 + \pi^{t x_0}_\# \Lie_f^\dagger (\bmu_0 + \bmu_1). \label{eq:peak_delay_liou}
\end{align}

\subsubsection{Consistency}
The $x_1$ input of $f$ from the Liouville equation \eqref{eq:peak_delay_liou} is not arbitrary; it should be equal to a time-delayed  $x_1(t) = x_0(t-\tau)$. This requirement will be imposed by a Consistency constraint. 
% \begin{lem}
% \label{lem:consistency_fixed}
% Let $x(t)$ be a solution to \eqref{eq:delay_dynamics} for some history $x_h$. The following two integrals are equal for all  $\phi \in C([0, T] \times X)$
% \begin{align}
% \label{eq:change_limits}
%     \int_{0}^T \phi(t, x(t-\tau))dt = \left(\int_{-\tau}^{0} + \int_{0}^{T-\tau} \right)\phi(t+\tau, x(t)) dt.
% \end{align}
% \end{lem}

\begin{lem}
\label{lem:consistency_free}
    % Let $x(t)$ be a solution to \eqref{eq:delay_dynamics} (same setting as Lemma \ref{lem:consistency_fixed}) 
    Let $x(\cdot)$ be a solution to \eqref{eq:delay_dynamics} for some history $x_h$ with an initial time of $0$ and a stopping time of $t^* \in [0, T]$. Then the following two integrals are equal for all  $\phi \in C([0, T] \times X)$:
    \begin{align}
    \label{eq:change_limits_free}
        &\left(\int_{0}^{t^*} + \int_{t^*}^{\min(T, t^*+\tau)}\right) \phi(t, x(t-\tau))dt  \nonumber\\
        &\qquad = \left(\int_{-\tau}^{0} + \int_{0}^{\min(t^*, T-\tau)} \right)\phi(t'+\tau, x(t)) dt'.
    \end{align}
\end{lem}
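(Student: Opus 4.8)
The statement is, at its core, a single change of variables in time, and my plan is to make that explicit rather than manipulate the four integrals separately. First I would observe that on each side the two listed integrals concatenate into one. On the left the intermediate breakpoint $t^*$ satisfies $0 \le t^* \le \min(T, t^*+\tau)$, so the sum equals $\int_0^{\min(T,t^*+\tau)} \phi(t, x(t-\tau))\,dt$. On the right, since $\tau < T$ by A1 and $t^* \ge 0$, the breakpoint $0$ lies in $[-\tau, \min(t^*, T-\tau)]$, so the sum equals $\int_{-\tau}^{\min(t^*, T-\tau)} \phi(t'+\tau, x(t'))\,dt'$.

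With both sides written as single integrals, I would apply the affine substitution $t' = t - \tau$ (equivalently $t = t'+\tau$, $dt = dt'$) to the left-hand integral. The integrand transforms as $\phi(t, x(t-\tau)) = \phi(t'+\tau, x(t'))$, matching the right-hand integrand exactly; the lower limit $t=0$ maps to $t'=-\tau$, and the upper limit $t=\min(T, t^*+\tau)$ maps to $t'=\min(T, t^*+\tau)-\tau$. It then remains only to confirm that this transformed upper limit agrees with the one on the right.

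The one point requiring care — and the only real content beyond bookkeeping — is the identity $\min(T, t^*+\tau)-\tau = \min(T-\tau, t^*)$, i.e. that subtracting a constant commutes with $\min$. I would dispatch it by cases: when $t^*+\tau \le T$ both sides equal $t^*$, and when $t^*+\tau > T$ both sides equal $T-\tau$. This is exactly the reason the endpoints $\min(T,t^*+\tau)$ and $\min(t^*,T-\tau)$ appear paired in the statement, and checking it completes the equality. For well-posedness I would note that for $t \in [0, \min(T,t^*+\tau)]$ the delayed argument $t-\tau$ ranges over $[-\tau, \min(t^*, T-\tau)] \subseteq [-\tau, t^*]$, on which $x(\cdot)$ takes values in $X$ (the history graph lies in $H_0 \subset [-\tau,0]\times X$ by A5, and the trajectory stays in $X$ up to $t^*$), while $t'+\tau \in [0,T]$ throughout; hence $\phi$ is always evaluated on its domain $[0,T]\times X$ and both integrands are continuous. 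Since the substitution is a rigid time shift, nothing beyond the existence of the solution $x(\cdot)$ is needed, and I do not anticipate any genuine obstacle: the lemma is a reindexing statement whose only subtlety lies in correctly tracking the $\min$ in the endpoints.
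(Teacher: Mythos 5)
Your proof is correct and is precisely the paper's argument: the paper dispatches the lemma in one line as ``a change of variable with $t' \leftarrow t - \tau$,'' which is exactly your substitution, and your additional bookkeeping (merging each pair of integrals, the case check that $\min(T, t^*+\tau)-\tau = \min(T-\tau, t^*)$, and the domain verification for $\phi$) simply makes explicit what the paper leaves implicit. No discrepancy to report.
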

\begin{proof}
    This follows from 
a change of variable with $t' \leftarrow t - \tau$.
\end{proof}
% The free-terminal-time expression in \eqref{eq:change_limits_free} adds a new integral symbol in the left-hand-side expression as compared to \eqref{eq:change_limits}. 
Equation \eqref{eq:change_limits_free} inspires a consistency constraint for the free-terminal-time \ac{MV}-solution in \eqref{eq:weak_solution}. The left-hand-side of \eqref{eq:change_limits_free} may be generalized to 
\begin{equation}
\label{eq:consistency_lhs}
    \inp{\phi(t, x_1)}{\bmu_0(t, x_0, x_1) + \bmu_1(t, x_0, x_1)} + \inp{\phi(t, x)}{\nu(t, x)},
\end{equation}
in which $\bmu_0$ is supported in times $[0, \min(t^*, T-\tau)]$,  $\bmu_1$ is supported in times $[T-\tau, t^*]$ if $t^* > T-\tau$, and the slack measure $\nu$ implements the $[t^* , \min(T, t^*+\tau)]$ limits. The right-hand-side of \eqref{eq:change_limits_free} may be interpreted as \begin{equation}
\label{eq:consistency_rhs}
    \inp{\phi(t+\tau, x)}{\mu_h(t, x)} + \inp{\phi(t+\tau, x_0)}{\bmu_0(t, x_0, x_1)}.
\end{equation}

Define $S^\tau$  as the shift operator $S^\tau \phi(t, x) = \phi(t+\tau, x)$. With an abuse of notation, the pushforward operation $S^\tau_\#$ applied to a measure (such as $\mu_h$) will have the expression
\begin{align}
     \inp{\phi}{S^\tau_\#\mu_h} = \inp{S^\tau \phi}{\mu_h} =  \inp{\phi(t+\tau, x)}{\mu_h(t, x)}.
\end{align}
The Consistency constraint inspired by Lemma \ref{lem:consistency_free} is
\begin{equation}
\label{eq:consistency_free}
    \pi^{t x_1}_\# (\bmu_0 + \bmu_1) + \nu = S^\tau_\#(\mu_h + \pi^{t x_0}_\# \bmu_0). 
\end{equation}

\begin{rmk}
    Equation \eqref{eq:consistency_free} may also be written as $\pi^{t x_1}_\# (\bmu_0 + \bmu_1) \leq S^\tau_\#(\mu_h + \pi^{t x_0}_\# \bmu_0)$. The associated slack measure is $\nu$.
\end{rmk}
% The measure $S^\tau_\#(\mu_h + \pi^{t x_0}_\# \bmu_0)$ dominates $\pi^{t x_1}_\# (\bmu_0 + \bmu_1)$, and the slack (orthogonal complement) from its Jordan decomposition is $\nu$. \MK{Milan: Again, this is not really a Jordan decomposition here.}

% Let $\tilde{\mu} \in \Mp{[0, T] \times X}$ be an occupation measure of a trajectory in \eqref{eq:delay_dynamics} between times $[0, T]$. 

% \begin{align*}
%       % \inp{\phi_i(t, x_i)}{\tilde{\mu}} &= \textstyle\int_{0}^{T} \phi_i(t, x(t  - \tau_i\mid x_h)) dt    & \\
%         &= \textstyle\int_{-\tau_i}^{T - \tau_i} \phi_i(t + \tau_i, x(t \mid x_h)) dt  \\
%          &= \textstyle\left(\int_{-\tau_i}^{0} + \int_{0}^{T - \tau_i} \right) \phi_i(t + \tau_i, x(t \mid x_h)) dt.
% \end{align*}

\subsection{Measure Program}

An infinite-dimensional \ac{LP} in terms of the measures from  \eqref{eq:weak_solution} to upper-bound Problem \eqref{eq:peak_delay_traj} is,
\begin{subequations}
\label{eq:peak_delay_meas}
    \begin{align}
        p^* = & \ \sup \quad \inp{p}{\mu_p} \label{eq:peak_delay_obj} \\
    & \inp{1}{\mu_0} = 1 \label{eq:peak_delay_prob}\\    
    & \pi^{t}_\# \mu_h = \lambda_{[-\tau, 0]} \label{eq:peak_delay_hist}\\   
    & \mu_p = \delta_0 \otimes\mu_0 + \pi^{t x_0}_\# \Lie_f^\dagger (\bmu_0 + \bmu_1) \label{eq:peak_delay_flow}\\        
    & \pi^{t x_1}_\# (\bmu_0 + \bmu_1) + \nu = S^\tau_\#(\mu_h + \pi^{t x_0}_\# \bmu_0) \label{eq:peak_delay_cons}\\ 
    & \textrm{Measure Definitions from  \eqref{eq:weak_solution}.} \label{eq:peak_delay_def}
    \end{align}
\end{subequations}
\begin{rmk}
    Membership in the history class $\hs$ is imposed  by the History-Validity constraint \eqref{eq:peak_delay_hist} and through support of $\mu_h$ in \eqref{eq:weak_solution_history}.
\end{rmk}
% \MK{Milan: I would mention that the history is imposed through the support of $\mu_h$.}

\begin{defn} 
\label{defn:mv_solution}
An \ac{MV}-solution to the \ac{DDE} \eqref{eq:delay_dynamics} with free-terminal-time and histories in $\hs$ is a tuple of measures that satisfy \eqref{eq:peak_delay_prob}-\eqref{eq:peak_delay_def} and \eqref{eq:weak_solution_history}-\eqref{eq:weak_solution_slack}.
\end{defn}

\begin{thm}
\label{thm:delay_upper_bound}
Under assumptions A1-A5, \eqref{eq:peak_delay_meas} will upper bound \eqref{eq:peak_delay_traj} with $p^* \geq P^*$ when $\hs$ is graph-constrained.
\end{thm}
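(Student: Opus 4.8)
The plan is to prove the upper bound $p^* \geq P^*$ by the standard relaxation argument: for every feasible point of the trajectory problem \eqref{eq:peak_delay_traj}, I exhibit a tuple of measures feasible for the \ac{LP} \eqref{eq:peak_delay_meas} whose objective equals the trajectory objective. Taking the supremum over all feasible trajectory points then shows that $p^*$ dominates every achievable value of \eqref{eq:peak_delay_traj}, giving the inequality. Concretely, I fix an admissible history $x_h \in \hs$ and a stopping time $t^* \in [0,T]$ with $x(t^*\mid x_h)\in X$; the nonreturn assumption A3 forces the entire trajectory to lie in the compact set $X$ over $[0,t^*]$, and A3 is precisely what guarantees that restricting the supremum in \eqref{eq:peak_delay_traj} to such stopping times leaves $P^*$ unchanged, so $p(x(t^*\mid x_h))$ is the value to be matched.

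Next I would build the six measures as Diracs and occupation measures. Set $\mu_0 = \delta_{x(0\mid x_h)}$ and $\mu_p = \delta_{(t^*,\, x(t^*\mid x_h))}$; the former is a probability measure satisfying \eqref{eq:peak_delay_prob}, and the latter gives $\inp{p}{\mu_p} = p(x(t^*\mid x_h))$, so the objectives agree. Let $\mu_h$ be the pushforward of $\lambda_{[-\tau,0]}$ along $t \mapsto (t, x_h(t))$; its $t$-marginal is Lebesgue by construction (verifying \eqref{eq:peak_delay_hist}), and since $\hs$ is graph-constrained by $H_0$ (assumption A5) its support lies in $H_0$, verifying \eqref{eq:weak_solution_history}. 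Define $\bmu = \bmu_0 + \bmu_1$ as the occupation measure of $t \mapsto (t, x(t\mid x_h), x(t-\tau\mid x_h))$ on $[0,t^*]$, split so that $\bmu_0$ carries the portion on $[0,\min(t^*, T-\tau)]$ and $\bmu_1$ the portion on $[T-\tau, t^*]$, and let $\nu$ be the occupation measure of $t \mapsto (t, x(t-\tau\mid x_h))$ on $[t^*, \min(T, t^*+\tau)]$. I would then check that all delayed arguments $x(t-\tau\mid x_h)$ lie in $X$: for $t-\tau < 0$ this is a history value (in $X$ as the range of $x_h$), and otherwise it is a trajectory value in $X$ by A3, so the support requirements in \eqref{eq:weak_solution} are met.

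The Liouville constraint \eqref{eq:peak_delay_flow} is verified by the fundamental theorem of calculus. Since $f$ is Lipschitz (A2), the trajectory is absolutely continuous, and for any $v \in C^1$ one has $\Lie_f v(t, x(t), x(t-\tau)) = \partial_t v + \dot{x}\cdot\nabla_x v = \frac{d}{dt} v(t, x(t))$ almost everywhere, whence $\inp{\Lie_f v}{\bmu} = v(t^*, x(t^*)) - v(0, x(0))$; adding $\inp{v(0,\cdot)}{\mu_0} = v(0, x(0))$ recovers $\inp{v}{\mu_p}$. The consistency constraint \eqref{eq:peak_delay_cons} is exactly the measure-level restatement of Lemma \ref{lem:consistency_free}: pairing the left side against a test function $\phi$ produces $\int_0^{\min(T,\,t^*+\tau)} \phi(t, x(t-\tau))\,dt$ (the two summands on the left-hand side of \eqref{eq:change_limits_free}, with $\nu$ supplying the $[t^*,\min(T,t^*+\tau)]$ contribution), while pairing the right side produces $\int_{-\tau}^{0}\phi(t+\tau, x_h(t))\,dt + \int_0^{\min(t^*,\,T-\tau)}\phi(t+\tau, x(t))\,dt$ (the right-hand side of \eqref{eq:change_limits_free} after the $t'\leftarrow t-\tau$ change of variables), and these agree by the lemma.

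I expect the main obstacle to be the bookkeeping in the consistency constraint, in particular handling the two regimes $t^* \leq T-\tau$ and $t^* > T-\tau$ so that the split of $\bmu$ into $\bmu_0,\bmu_1$ and the time support of the slack $\nu$ align precisely with the endpoints $\min(t^*, T-\tau)$ and $\min(T, t^*+\tau)$ appearing in Lemma \ref{lem:consistency_free}. Once the supports are aligned, each constraint reduces to the elementary identities above; combined with the objective match, feasibility of the constructed tuple gives $p^* \geq p(x(t^*\mid x_h))$ for every admissible trajectory, and taking the supremum yields $p^* \geq P^*$. I emphasize that this is the relaxation (easy) direction only; the reverse inequality, which would require recovering trajectories from arbitrary \ac{MV}-solutions, is not needed here.
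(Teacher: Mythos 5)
Your proposal is correct and takes essentially the same approach as the paper: the identical construction of $\mu_0$, $\mu_p$, and $\mu_h$ as Diracs and the history occupation measure, the same case split of the joint occupation measure of the delay embedding into $\bmu_0$ on $[0,\min(t^*,T-\tau)]$ and $\bmu_1$ on $[T-\tau,t^*]$, and the same slack $\nu$ on $[t^*,\min(T,t^*+\tau)]$. Your explicit verifications of the Liouville constraint (fundamental theorem of calculus along the absolutely continuous trajectory) and of consistency (via Lemma \ref{lem:consistency_free}) merely spell out steps the paper leaves implicit.
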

\begin{proof}
This proof will proceed by demonstrating that every $(t^*, x_h)$ candidate from \eqref{eq:peak_delay_traj} may be expressed by a unique \ac{MV}-solution from Defn. \ref{defn:mv_solution}. The history measure $\mu_h$ is the $[-\tau, 0]$ occupation measure of $x(t)$, and the initial measure $\mu_0$ is the Dirac-delta $\delta_{x_h(0^+)}$. The peak measure $\mu_p$ is the Dirac-delta $\delta_{t=t^*} \otimes \delta_{x = x(t^* \mid x_h)}$. 
The relaxed occupation measures $(\bmu_0, \bmu_1, \nu)$ will now be considered. For convenience, define $z(t) = (t, x(t \mid x_h), x(t-\tau \mid x_h))$ as the delay embedding of the trajectory $x(t \mid x_h)$. In the case where $t^* \in [0, T-\tau]$, then $\bmu_0$ is the $[0, t^*]$ occupation measure of $z(t)$, $\bmu_1$ is the zero measure, and $\nu$ is the $[t^*, t^*+\tau]$ occupation measure of $(t, x(t-\tau \mid x_h))$. Alternatively when $t^* \in (T-\tau, T]$, $\bmu_0$ is the $[0, T-\tau]$ occupation measure of  $z(t)$, $\bmu_1$ is the $[T-\tau, t^*]$ occupation measure of $z(t)$, and $\nu$ is the $[t^*, T]$ occupation measure of $(t, x(t-\tau \mid x_h))$.
All of the measures in \eqref{eq:weak_solution} have been defined for each input $(t^*, x_h)$, which proves that $p^*\geq P^*$. 
\end{proof}

% \urg{Arxiv extension: strong duality goes here}

% \MK{Milan: To make it into a proper proof, you should argue that the measures constructed in that way will satisfy the constraints of \eqref{eq:peak_delay_meas}, at least the less obvious ones like 16d and 16f. Alternatively, we can leave proof details to a journal paper.}
Appendix \ref{app:delay_structure} uses these methods to form \ac{MV}-solutions to systems with other delay structures (proportional delay, long-delay discrete-time systems). 

\begin{rmk}
    The proof of Theorem  \ref{thm:delay_upper_bound} provides a unique \ac{MV} solution for each \ac{DDE} trajectory. Additionally, each \ac{DDE} trajectory given an initial condition $x_h$ is unique under the Lipschitz assumption A2.

    We note that \ac{MV} solutions are not necessarily unique (for a given terminal time distribution $\pi^t_\# \mu_p$) when the history measure $\mu_h$ is supported on the graph of more than one curve. As an example, Figure \ref{fig:same_occ} shows two sets of curves under the dynamics $\dot{x}(t) = -2x(t) - 3x(t-1)$ in the times $t\in [0, 5]$. The history occupation measure $\mu_h = 0.5 \lambda_{[-1, 0]} \otimes \delta_{x=1} + 0.5 \lambda_{[-1, 0]} \otimes \delta_{x=-1}$ is supported in the set $\mu_h \in \Mp{[-1, 0] \times \{-1, 1\}}$. The superposition of each set of red and blue curves each have the same history measure $\mu_h$, but the switch that takes place on the bottom plot (e.g. blue: $x_h(t) = 1$ for $t \in [-1, -0.5)$, \ $x_h(t) = -1$ for $t \in [0.5, 0]$) yields a different trajectory going forward in time.
    
    % arising from the history occupation measure supported in $H_0 = [-1, 0] \times \{-1, 1\}$ 

\begin{figure}[h]
    \centering
    \includegraphics[width=0.7\linewidth]{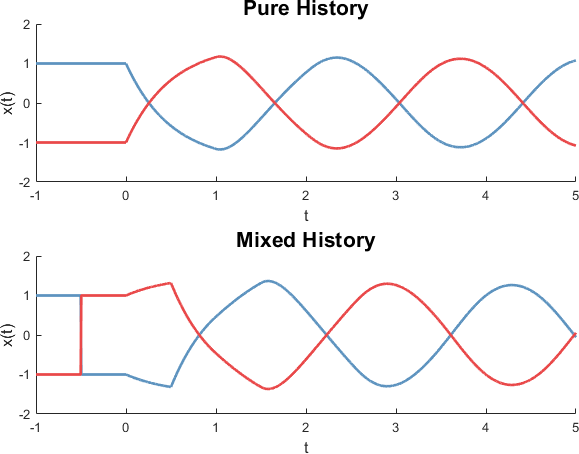}
    \caption{The same $\mu_h$ leads to different trajectories in times $(0, 5]$}
    \label{fig:same_occ}
\end{figure}
    
\end{rmk}

\subsection{Function Program}

The dual program of \eqref{eq:peak_delay_meas} with variables $(\gamma, \xi, v, \phi)$ is
\begin{subequations}
\label{eq:peak_delay_cont}
    \begin{align}
        d^* = & \inf_{\gamma \in \R} \ \gamma + \int_{-\tau}^0 \xi(t) dt \label{eq:peak_delay_cont_obj} \\
        & \xi(t) + \phi(t+\tau, x) \geq 0 & & \forall (t, x) \in H_0 \\ 
        & \gamma \geq v(0, x)  & & \forall x \in X_0 \\ 
        & v(t, x) \geq  p(x)  & & \forall (t, x) \in [0, T] \times X\\ 
        & \phi(t, x) \leq 0 & & \forall (t, x) \in [0, T] \times X\\ 
        & \Lie_f v(t, x_0) + \phi(t, x_1) \leq \phi(t+\tau, x_0) & & \forall (t, x_0, x_1) \in [0, T-\tau] \times X^2 \label{eq:peak_delay_cont_lie0}\\ 
        & \Lie_f v(t, x_0) + \phi(t, x_1) \leq 0 & & \forall (t, x_0, x_1) \in [T-\tau, T] \times X^2 \label{eq:peak_delay_cont_lie1}\\
        & \xi \in C([-\tau, 0]) \\
        & v \in C^1([0, T] \times X) \\
        & \phi \in C([0, T] \times X).
    \end{align}
\end{subequations}

\begin{thm}
    There is no duality gap between \eqref{eq:peak_delay_meas} and \eqref{eq:peak_delay_cont}.
\end{thm}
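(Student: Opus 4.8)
The plan is to recognize \eqref{eq:peak_delay_meas} and \eqref{eq:peak_delay_cont} as a primal--dual pair of conic linear programs over the paired spaces of Borel measures and continuous functions, and then to verify the hypotheses of a standard zero--duality--gap theorem for infinite-dimensional linear programming, of the type already used for the ODE relaxations in \cite{lewis1980relaxation, cho2002linear, lasserre2009moments}. Concretely, I would collect the primal variables $(\mu_h, \mu_0, \mu_p, \bmu_0, \bmu_1, \nu)$ into a single element of the cone $\mathcal{K} = \Mp{H_0}\times\Mp{X_0}\times\Mp{[0,T]\times X}\times\Mp{[0,T-\tau]\times X^2}\times\Mp{[T-\tau,T]\times X^2}\times\Mp{[0,T]\times X}$ and write the equalities \eqref{eq:peak_delay_prob}--\eqref{eq:peak_delay_cons} as $\A\mu = b$ for a bounded linear map $\A$ whose components are the mass functional, time-marginalization, the adjoint Liouville operator $\Lie_f^\dagger$, the shift-pushforward $S^\tau_\#$, and the state-marginalizations. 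The first task is the mechanical adjoint computation: pairing each constraint against its dual variable $(\gamma, \xi, v, \phi)$ and transposing $\A$ must reproduce exactly the dual inequalities of \eqref{eq:peak_delay_cont}, with the cone constraint $\mu\in\mathcal{K}$ dualizing to the sign conditions $v(t,x)\ge p(x)$, $\phi\le 0$, and the Lie inequalities \eqref{eq:peak_delay_cont_lie0}--\eqref{eq:peak_delay_cont_lie1}. This confirms weak duality $p^*\le d^*$ and frames the remaining task as a closedness argument.

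The key analytic step is to show the primal feasible set is weak-$*$ compact, which I would do by bounding every mass. Testing the Liouville constraint \eqref{eq:peak_delay_flow} with $v\equiv 1$ and using $\Lie_f 1=0$ gives $\inp{1}{\mu_p}=\inp{1}{\mu_0}=1$, so $\mu_p$ is a probability measure; testing with $v(t,x)=t$ and $\Lie_f t = 1$ gives $\inp{1}{\bmu_0+\bmu_1}=\inp{t}{\mu_p}\le T$. Constraint \eqref{eq:peak_delay_hist} fixes $\inp{1}{\mu_h}=\tau$, and pairing the consistency constraint \eqref{eq:peak_delay_cons} with the constant $1$ yields $\inp{1}{\bmu_1}+\inp{1}{\nu}=\tau$, which bounds $\nu$. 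Since all supports are the fixed compact sets of \eqref{eq:weak_solution} (using A1 and A5), Banach--Alaoglu places the feasible set inside a weak-$*$ compact set. Nonemptiness is inherited from the explicit trajectory construction in the proof of Theorem \ref{thm:delay_upper_bound}, and finiteness of $p^*$ follows because $\mu_p$ is a probability measure and $p$ is continuous on the compact set $[0,T]\times X$ (A4).

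With compactness in hand, I would verify weak-$*$ closedness of the feasible set: each equality in \eqref{eq:peak_delay_prob}--\eqref{eq:peak_delay_cons} is the vanishing of a weak-$*$ continuous linear functional, since the preadjoints of $\Lie_f^\dagger$, $S^\tau_\#$, and the marginalizations send continuous (respectively $C^1$, where A2 guarantees $\Lie_f v\in C$) test functions to continuous test functions, and $\mathcal{K}$ is weak-$*$ closed. The objective $\mu_p\mapsto\inp{p}{\mu_p}$ is weak-$*$ continuous, so the primal supremum is attained. To conclude $p^*=d^*$, I would invoke the closedness criterion for infinite-dimensional LP duality: the constraint--objective set $\{(\A\mu,\inp{p}{\mu_p}):\mu\in\mathcal{K},\ \inp{1}{\mu}\le M\}$ is the image of a weak-$*$ compact convex set under a weak-$*$ continuous linear map, hence compact, convex, and closed; closedness of this set is precisely the hypothesis that rules out a duality gap. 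One technical point to dispatch is that \eqref{eq:peak_delay_flow} is tested only against $C^1$ functions while it asserts an equality in $\Mp{[0,T]\times X}$; density of $C^1$ in $C$ on the compact domain upgrades the $C^1$-weak equality to a genuine equality of measures.

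The main obstacle is this closedness and continuity bookkeeping rather than any single inequality. One must ensure the program is posed in the correct pairing so that $\Lie_f^\dagger$ (which a priori maps measures into distributions) is handled entirely through its $C^1$ preadjoint $\Lie_f$, and that the mass bounds of the second paragraph hold uniformly over a weak-$*$ neighborhood of the right-hand side $b$, so that the value function $b\mapsto p^*(b)$ is upper semicontinuous. Securing those uniform bounds is exactly what makes the constraint--objective set closed and hence delivers the absence of a duality gap.
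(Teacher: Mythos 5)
Your proposal is correct and takes essentially the same route as the paper's Appendix \ref{app:strong_duality_delay}: the same conic-LP formulation with adjoint operators, the same mass bounds (your test functions $v\equiv 1$, $v=t$, and $\phi\equiv 1$ reproduce Lemma \ref{lem:moment_bound} exactly), and feasibility supplied by the trajectory construction of Theorem \ref{thm:delay_upper_bound}. The only difference is one of packaging: where you inline the Banach--Alaoglu compactness and closed-image argument, the paper verifies the same hypotheses (compact supports via A1, bounded masses, continuity of $\Lie_f v$, nonempty feasible set) and delegates the closedness machinery wholesale to Theorem 2.6 of \cite{tacchi2021thesis}.
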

\begin{proof}
    See Appendix \ref{app:strong_duality_delay}.
\end{proof}

We pose the following conjecture based on \cite{lewis1980relaxation, rosenblueth1991relaxation}:
\begin{conj}
\label{conj:delay}
Assume that A1-A5 hold. Additionally, assume that $T > \tau>0$ and the image-set $f(t, x_0, X)$ is convex for all fixed $t \in [0, T], \ x_0 \in X$. Then there is no relaxation gap between \eqref{eq:peak_delay_traj} and \eqref{eq:peak_delay_meas} ($p^*=P^*$).
\end{conj}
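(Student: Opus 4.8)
The plan is to establish the reverse inequality $p^* \le P^*$, since Theorem~\ref{thm:delay_upper_bound} already supplies $p^* \ge P^*$. The strategy mirrors the \ac{ODE} relaxation result of \cite{lewis1980relaxation} and the delay analysis of \cite{rosenblueth1991relaxation}: starting from an optimal (or near-optimal) \ac{MV}-solution $(\mu_h, \mu_0, \mu_p, \bmu_0, \bmu_1, \nu)$ of \eqref{eq:peak_delay_meas}, I would exhibit it as a superposition (average) of genuine \ac{DDE} trajectories, so that the linear objective $\inp{p}{\mu_p}$ becomes an average of $p(x(t^* \mid x_h))$ over admissible pairs $(t^*, x_h)$ and is therefore bounded above by $P^*$.

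First I would lift the dynamics to the history (functional) state space $\hs \subset PC([-\tau,0],X)$, where the state at time $t$ is the segment $\theta \mapsto x(t+\theta)$, turning \eqref{eq:delay_dynamics} into an evolution with a well-defined solution map. The measure program retains only the marginal $(t, x_0, x_1)$ information of this flow, so the next step is a disintegration/superposition argument: I would disintegrate the joint occupation measure $\bmu = \bmu_0 + \bmu_1$ and the peak measure $\mu_p$ in time, and invoke a superposition principle for the free-terminal-time Liouville equation \eqref{eq:peak_delay_flow} to represent the flow as a probability measure over absolutely continuous curves $t \mapsto x(t)$. The convexity hypothesis on $f(t, x_0, X)$ enters through a measurable-selection (Filippov-type) step: it guarantees that the relaxed velocity encoded by $\bmu$ at each $(t, x_0)$ is attained by some genuine delayed value $x_1 = x(t-\tau) \in X$, so that the superposed curves solve \eqref{eq:delay_dynamics} rather than its convexification. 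The consistency constraint \eqref{eq:peak_delay_cons}, together with the history normalization \eqref{eq:peak_delay_hist} and the role of the slack measure $\nu$ (which absorbs the terminal window $[t^*, \min(T, t^*+\tau)]$ of unused delayed history, cf. Lemma~\ref{lem:consistency_free}), is what should force the delayed argument $x_1$ along each curve to match $x_0$ shifted by $\tau$; assumption A3 ensures no curve re-enters $X$ to spoil the bound.

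The hard part -- and the reason this remains a conjecture -- is that the consistency constraint \eqref{eq:peak_delay_cons} couples only the \emph{marginals} of the current and delayed states, not the pathwise relation $x_1(t) = x_0(t-\tau)$ along each individual trajectory. As the Remark following Theorem~\ref{thm:delay_upper_bound} illustrates (Figure~\ref{fig:same_occ}), distinct families of trajectories can share the same history measure $\mu_h$ and the same aggregate occupation measure while producing genuinely different forward evolutions; an \ac{MV}-solution may therefore be a ``mixture'' that does not disintegrate into honest \ac{DDE} trajectories, so a naive superposition can fail. The crux is to show that the \emph{optimum} of the linear program is nonetheless attained (or approached) by a non-mixed solution -- for instance via an extreme-point argument, proving that maximizing $\inp{p}{\mu_p}$ over the convex feasible set drives the solution to an extreme point corresponding to a single delay-consistent trajectory. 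Whereas convexity of the velocity set is exactly what closes the gap in the memoryless \ac{ODE} setting of \cite{lewis1980relaxation}, here one must simultaneously control the delayed-state consistency across the shift $S^\tau$, and whether convexity of $f(t, x_0, X)$ alone suffices to rule out the mixing phenomenon is precisely the delicate point that keeps the statement at the level of a conjecture.
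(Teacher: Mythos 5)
The statement you were asked to prove is posed in the paper as Conjecture \ref{conj:delay} only: no proof is given, and the paper states explicitly that proving it is the subject of ongoing work. So there is no in-paper argument to compare yours against, and the honest benchmark is whether your proposal closes the question. It does not, and you say so yourself. That said, the plan you sketch is the natural one — it follows the relaxation-tightness template of \cite{lewis1980relaxation} and \cite{rosenblueth1991relaxation}, precisely the works the paper cites when posing the conjecture — and your diagnosis of the obstruction is accurate and matches the paper's own remark after Theorem \ref{thm:delay_upper_bound}: the consistency constraint \eqref{eq:peak_delay_cons} couples only the $(t,x_1)$-marginal of $\bmu_0+\bmu_1$ (plus $\nu$) with the shifted $(t,x_0)$-marginal and $\mu_h$, so distinct trajectory bundles sharing these marginals (Figure \ref{fig:same_occ}) are indistinguishable to the linear program, and an MV-solution need not disintegrate into delay-consistent curves.

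The concrete gaps that keep this a proposal rather than a proof are: (i) the superposition step is invoked, not established — an Ambrosio-type superposition principle applies to the Liouville equation \eqref{eq:peak_delay_flow} alone and would represent $\bmu_0+\bmu_1$ by curves of the \emph{decoupled} inclusion $\dot{x} \in f(t,x,X)$; convexity of $f(t,x_0,X)$ plus a Filippov selection then yields a measurable $x_1(t) \in X$ realizing each velocity, but nothing in that argument forces $x_1(t) = x(t-\tau)$ pathwise, which is the entire content of the conjecture; (ii) the extreme-point route is only gestured at — you would need to show that maximizers of $\inp{p}{\mu_p}$ over the feasible set of \eqref{eq:peak_delay_meas} are attained at points corresponding to a single delay-consistent trajectory, and since the consistency constraint remains marginal-level even at extreme points, this is not a routine Choquet/Bauer argument; (iii) any successful proof must use the single-delay structure in an essential way, since the paper notes via \cite{rosenblueth1992proper} that analogous relaxations are expected to have genuine gaps with two or more delays — your plan is consistent with this through the single shift $S^\tau$ but never exploits it. In short: correct framing, correct identification of the crux, and a candid acknowledgment that the decisive steps are missing — which is exactly the status the paper assigns to the statement.
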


Proving Conjecture \ref{conj:delay} is the subject of ongoing work.

Appendix \ref{app:ocp} contains a further discussion of the continuity and structural aspects of the dual solution in \eqref{eq:peak_delay_cont} as applied to bounding costs on \ac{DDE} \acp{OCP}.
% \old{
% \input{sections/weak_solutions}
% \input{sections/peak}}

% \input{sections/shaping.tex}

\section{Peak Moment Program}
\label{sec:moment}
This section will briefly review the moment-\ac{SOS} hierarchy \cite{lasserre2009moments} in order to approximate-from-above Program \eqref{eq:peak_delay_meas} by a sequence of finite-dimensional \acp{SDP}.
\subsection{Review of Moment-SOS Hierarchy}
% \urg{TODO: fill this in}

Let $\mu\in \Mp{X}$ be a measure, and let $\alpha \in \N^n$ be a multi-index. The $\alpha$-moment of $\mu$ is the pairing $\bm_\alpha = \inp{x^\alpha}{\mu}$. The moment sequence $\bm = \{\bm_\alpha\}_{\alpha \in \N^n}$ is the infinite collection of moments of $\mu$. A unique (Riesz) linear functional $L_\bm$ exists operating on each polynomials $p =\sum_{\alpha \in \mathcal{J}} p_\alpha x^\alpha \in \R[x]$  as $L_\bm(p) = \sum_{\alpha \in \mathcal{J}} p_\alpha \bm_\alpha$ for a finite index set $\mathcal{J} \subset \N^n$.

A set is \ac{BSA} if it is defined by a finite number of polynomial inequality constraints, such as by $\K = \{x \in \R^n \mid g_k(x) \geq 0: \ k = 1 \ldots N_c\} \subseteq \R^n$. The measure $\mu$ is supported on $\K$ if $\mu \in \Mp{\K}$.
Given a polynomial $g = \sum_{\gamma \in \mathcal{J}} g_\gamma x^\gamma$, the localizing matrix $\M[g \bm]$ induced by the  constraint $g(x) \geq 0$ with respect to the moment sequence $\bm$ is the infinite-dimensional matrix indexed by $\alpha, \beta \in \N^n$ as $\M[g \bm]_{\alpha, \beta} = L_{\bm}(x^{\alpha+\beta} g) = \sum_{\gamma \in \R^n} g_{\gamma} \bm_{\alpha+\beta+\gamma}$. The moment matrix $\M[ \bm]$ is the localizing matrix associated with $g = 1$. The matrix $\M[\K \bm]$ is the block-diagonal matrix comprised of $\M[\bm]$ and $\M[g_k \bm]$ for $k=1 \ldots N_c$.

Let $\{\tilde{\bm}_{\alpha}\}_{\alpha \in \N^n}$ be a sequence of real numbers. If there exists some measure $\tilde{\mu} \in \Mp{\K}$ such that $\forall \alpha \in \N^n: \ \inp{x^\alpha}{\mu} = \tilde{\bm}_\alpha$ then $\tilde{\mu}$ is a representing measure for $\tilde{\bm}$, and $\tilde{\bm}$ is a moment-sequence for $\tilde{\mu}$. Such a representing measure (if it exists) could be nonunique. The stronger condition that there is a unique representing measure for $\tilde{\bm}$ is called moment determinacy. 
A necessary condition for $\tilde{\bm}$ to have a representing measure is that the block-diagonal matrix $\M[\tilde \bm]$ is \ac{PSD}. This necessary condition is also sufficient if $\K$ satisfies an \textit{Archimedean} condition (stronger than compactness, equivalent after a ball constraint $R - \norm{x}_2^2 \geq 0$ is added to $\K$ for sufficiently large $R>0$ if $\K$ is compact). 
In general we will call $\tilde \bm$ a \textit{pseudo-moment} sequence.

The order-$d$ truncation of $\M[\K \bm]$ (for $d \in \N$ and expressed as $\M_d[\K \bm]$) keeps entries of degree $\leq 2d$, and preserves the top-corner of each matrix in the block-diagonal. 
The moment matrix $\M_d[\bm]$ is a \ac{PSD} matrix of size $\binom{n+d}{d}$ assuming a monomial basis for $x$ is employed. 
The size of each truncated localizing matrix $\M_d[g_k \bm]$ is $\binom{n+d-\ceil{d_k/2}}{d-\ceil{d_k/2}}$, where $d_k = \deg g_k$. 
The moment-\ac{SOS} hierarchy is the process of increasing the degree $d \rightarrow \infty$ when forming moment programs associated to measure \acp{LP}.

\subsection{Moment Program}

Additional assumptions are required in order to approximate \eqref{eq:peak_delay_meas} using the moment-\ac{SOS} hierarchy:
\begin{itemize}
    \item[A6] The sets $H_0$, $X_0$, and $X$ are Archimedean \ac{BSA} sets.
    \item[A7] Both $p$ and $f$ are polynomials.
\end{itemize}

Let the measures $(\mu_h, \mu_0, \mu_p, \bmu_0, \bmu_1, \nu)$ have associated pseudo-moment sequences $(\bm^h, \bm^0, \bm^p, \bar{\bm}^0, \bar{\bm}^1, \bm^\nu)$ respectively. Let $\alpha \in \N^n$ and $\beta \in \N$ be multi-indices that define monomial test functions $x_0^\alpha t^\beta$. For each multi-index 
tuple $(\alpha, \beta)$, the operator $\textrm{Liou}_{\alpha \beta}(\bm^0, \bm^p, \bar{\bm}^0, \bar{\bm}^1)$  may be derived from the linear relations induced by the Liouville equation \eqref{eq:peak_delay_flow} (in which $\delta_{\beta0}=1$ is a Kronecker delta):  
 % \begin{align}
 % 0 &= L_{\bm^0}(x^\alpha) \delta_{\beta 0} + L_{\bar{\bm}^0}(\Lie( x_0^\alpha t^\beta))+ L_{\bar{\bm}^1}(\Lie( x_0^\alpha t^\beta)) - L_{\bm^\tau}(x^\alpha t^\beta).
 %     \end{align}
 \begin{align}
 0 &= \inp{x^\alpha}{\mu_0} \delta_{\beta 0} + \inp{\Lie( x_0^\alpha t^\beta)}{\bmu^0 + \bmu^1} - \inp{x^\alpha t^\beta}{\mu_\tau}.
     \end{align}

Similarly, the operator     $\textrm{Cons}_{\alpha \beta}(\bm^h, \bm^\nu, \bar{\bm}^0, \bar{\bm}^1,)$ may be derived from the consistency constraint \eqref{eq:peak_delay_cons} by
\begin{align}
    0 = & \inp{x_1^\alpha t^\beta}{\bmu^0 + \bmu^1} + \inp{x^\alpha t^\beta}{\nu} - \inp{x^\alpha(t+\tau)^\beta}{\mu_h} \\
    &  -\inp{x_0^\alpha(t+\tau)^\beta}{\bmu^0}. \nonumber
\end{align}

Given a degree $d \in \N$, the dynamics degree $\tilde{d} \geq d$  may be defined as $\tilde{d}=d + \floor{\deg f /2}.$

\begin{prob}
Program \eqref{eq:peak_delay_meas} is upper-bounded by the following order-$d$ \ac{LMI} in pseudo-moments:
\begin{subequations}
\label{eq:peak_delay_lmi}
\begin{align}
    p^*_{d} = & \max \quad  L_{\bm^p}(p)  \label{eq:peak_delay_lmi_obj} \\
        & \bm^0_0 = 1 \label{eq:peak_delay_lmi_prob}\\
    &  \forall (\alpha, \beta) \in \N^{n+1}_{\leq 2d}: \nonumber \\
        & \qquad \bm^h_{\beta} = \textstyle \int_{-\tau}^0 t^\beta dt = -(-\tau)^{\beta+1}/(\beta+1) \label{eq:peak_delay_lmi_leb}  \\
&\qquad \textrm{Liou}_{\alpha \beta}(\bm^0, \bm^p, \bar{\bm}^0, \bar{\bm}^1) = 0  \label{eq:peak_delay_lmi_flow} \\
    &\qquad \textrm{Cons}_{\alpha \beta}(\bm^h, \bm^\nu, \bar{\bm}^0, \bar{\bm}^1,)= 0 \label{eq:peak_delay_lmi_cons}   \\
    & \M_d((X_0) \bm^0), \ \M_{\tilde{d}}((H_0)\bm^h) \succeq 0   \label{eq:peak_delay_lmi_supp_beg} \\
    &\M_d(([0, T] \times X)\bm^p) \succeq 0 \\
    &\M_{\Tilde{d}}(([0, T-\tau] \times X^2)\bar{\bm}^0) \succeq 0 \\
    &\M_{\Tilde{d}}(([T-\tau, T] \times X^2)\bar{\bm}^1) \succeq 0 \\
    &\M_{\tilde{d}}(([0, T] \times X)\bm^\nu) \succeq 0. \label{eq:peak_delay_lmi_supp_end} 
\end{align}
\end{subequations}
\end{prob}
The objective \eqref{eq:peak_delay_lmi_obj} is the pseudo-moment version of $\inp{p}{\mu_p}$. 
Constraints \eqref{eq:peak_delay_lmi_leb} and \eqref{eq:peak_delay_lmi_prob} are History-Validity constraints from \eqref{eq:History-Validity} when applied to the pseudo-moments $(\bm^\nu, \bm^0)$. Constraints \eqref{eq:peak_delay_lmi_flow} and \eqref{eq:peak_delay_lmi_cons} are the Liouville and Consistency constraints respectively. Constraints \eqref{eq:peak_delay_lmi_supp_beg}-\eqref{eq:peak_delay_lmi_supp_end} are  support constraints necessary for the pseudo-moments to have representing measures.

% The following boundedness result is required to ensureconvergence
Boundedness of all moments of measures in \eqref{eq:weak_solution} is required to obtain convergence of \eqref{eq:peak_delay_lmi} to \eqref{eq:peak_delay_meas} as $d \rightarrow \infty$.
\begin{lem}
\label{lem:moment_bound}
All measures from \eqref{eq:weak_solution}  in an \ac{MV}-solution (Defn. \ref{defn:mv_solution}) are bounded under assumptions A1-A7.
\end{lem}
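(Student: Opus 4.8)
The plan is to show that each measure in the MV-solution has finite mass, and then bootstrap from mass-boundedness to boundedness of all moments by exploiting the compactness assumption A1. Since all the measures are supported on compact sets (by A1 and A6, the sets $H_0$, $X_0$, $X$, and the products $[0,T] \times X$, $[0,T-\tau]\times X^2$, etc., are all compact), every monomial test function is continuous and hence bounded on the relevant support. Consequently, once I establish that a measure $\mu$ has finite mass $\mu(\cdot) = \inp{1}{\mu} < \infty$, each moment $\inp{x^\alpha t^\beta}{\mu}$ is automatically bounded in absolute value by $\left(\sup |x^\alpha t^\beta|\right)\inp{1}{\mu}$, where the supremum is finite by compactness. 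So the entire lemma reduces to bounding the total masses of the six measures $(\mu_h, \mu_0, \mu_p, \bmu_0, \bmu_1, \nu)$.

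\textbf{Step 1: masses fixed directly by the constraints.} The History-Validity constraints \eqref{eq:peak_delay_prob}--\eqref{eq:peak_delay_hist} pin down two masses immediately: $\inp{1}{\mu_0} = 1$, so $\mu_0$ has unit mass, and $\pi^t_\# \mu_h = \lambda_{[-\tau,0]}$ forces $\inp{1}{\mu_h} = \inp{1}{\lambda_{[-\tau,0]}} = \tau < \infty$. These require no further argument.

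\textbf{Step 2: bounding the occupation masses via the Consistency constraint.} Next I would use the Consistency constraint \eqref{eq:peak_delay_cons} to control $\bmu_0$, $\bmu_1$, and $\nu$. Testing \eqref{eq:peak_delay_cons} against the constant function $\phi \equiv 1$ (formally, pairing both sides with $1$) and using that the shift operator and the projection-pushforwards preserve mass, I obtain
\begin{equation*}
\inp{1}{\bmu_0 + \bmu_1} + \inp{1}{\nu} = \inp{1}{\mu_h} + \inp{1}{\bmu_0},
\end{equation*}
which simplifies to $\inp{1}{\bmu_1} + \inp{1}{\nu} = \inp{1}{\mu_h} = \tau$. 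Since all three measures are nonnegative, this yields $\inp{1}{\bmu_1} \leq \tau$ and $\inp{1}{\nu} \leq \tau$. It remains only to bound $\inp{1}{\bmu_0}$, which the above relation does not isolate. For this I would appeal to the time-support structure: $\bmu_0 \in \Mp{[0, T-\tau]\times X^2}$, so its time-marginal $\pi^t_\# \bmu_0$ is a nonnegative measure on $[0, T-\tau]$. Pairing the Liouville equation \eqref{eq:peak_delay_flow} with the test function $v(t,x) = t$ gives $\inp{t}{\mu_p} = \inp{1}{\bmu_0 + \bmu_1}$ (since $\Lie_f t = 1$ and the $\delta_0 \otimes \mu_0$ term vanishes at $t=0$), and because $\mu_p$ lives on $[0,T]\times X$ with the terminal-time marginal being a probability measure (inherited from $\mu_0$ through Liouville), one has $\inp{t}{\mu_p} \leq T$; combined with the bound on $\bmu_1$ this controls $\inp{1}{\bmu_0}$. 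Finally, $\inp{1}{\mu_p}$ is bounded by tracking the mass of $\mu_p$ through the Liouville equation against $\phi \equiv 1$, giving $\inp{1}{\mu_p} = \inp{1}{\mu_0} = 1$ since $\Lie_f 1 = 0$.

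\textbf{The main obstacle} I anticipate is the bound on $\bmu_0$, because the Consistency constraint only directly controls the combination $\bmu_1 + \nu$ and leaves $\bmu_0$ appearing on both sides. The cleanest route is the $v(t,x)=t$ test against Liouville sketched above, which converts the occupation mass into a first-moment-in-time of $\mu_p$ that is capped by the horizon $T$; I would need to verify carefully that the $\pi^{tx_0}_\#$ marginalization in \eqref{eq:peak_delay_flow} does not interfere (it does not, since the test function $t$ depends only on the $(t,x_0)$ variables that survive the projection). Once all six masses are finite, the compactness reduction in the opening paragraph immediately upgrades mass-boundedness to boundedness of every moment, completing the proof under A1--A7.
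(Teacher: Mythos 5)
Your proposal is correct and follows essentially the same route as the paper's proof: reduce moment bounds to mass bounds via compactness (A1), fix $\inp{1}{\mu_0}=1$ and $\inp{1}{\mu_h}=\tau$ from the History-Validity constraints, pair the Liouville equation \eqref{eq:peak_delay_flow} with $v=1$ and $v=t$ to obtain $\inp{1}{\mu_p}=1$ and $\inp{1}{\bmu_0+\bmu_1}=\inp{t}{\mu_p}\leq T$, and pair the Consistency constraint \eqref{eq:peak_delay_cons} with $\phi\equiv 1$ to control $\nu$. The only (immaterial) difference is ordering --- you invoke Consistency before Liouville, which also gives the slightly sharper bound $\inp{1}{\bmu_1}\leq\tau$ --- and your careful check that the $\pi^{tx_0}_\#$ marginalization is harmless for the test function $t$ is a correct observation the paper leaves implicit.
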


\begin{proof}
Boundedness of a measure's mass and support is a  sufficient condition that all of the measure's moments are bounded. Assumption A1 ensures compactness, with the requirement from Defn. \ref{defn:graph_constrained} that $H_0 \subseteq [-\tau, X]$ and $X_0 \subseteq X$. The remainder of this proof will involve finding upper bounds on the masses of all measures in \eqref{eq:weak_solution}.

The initial measure $\mu_0$ has a mass of 1, and the history measure $\mu_h$ has a mass of $\tau$ by the History-Validity constraints \eqref{eq:peak_delay_prob} and \eqref{eq:peak_delay_hist}.
Substitution of the test function $v(t, x) = 1$ in the Liouville \eqref{eq:peak_delay_flow} leads to $\inp{1}{\mu_p} = \inp{1}{\mu_0} = 1$. Since $T$ is finite, the moment $\inp{t}{\mu_p} \leq \inp{1}{\mu_p} \; (\sup_{t \in [0, T]} t) = T$ is also finite. Use of the test function $v(t, x) = t$ into the Liouville \eqref{eq:peak_delay_flow}  yields $\inp{t}{\mu_p} = \inp{1}{\bmu_0 +\bmu_1} \leq T$. 
Because $\bmu_0$ and $\bmu_1$ are both nonnegative Borel measures, it holds that $\inp{1}{\bmu_0} \leq T$ and $\inp{1}{\bmu_1} \leq T$. The final constraint involves substitution of $\phi(t, x) = 1$ into the Consistency \eqref{eq:peak_delay_cons}, resulting in
\begin{align}
\label{eq:consistency_substitute}
\inp{1}{\bmu_0 + \bmu_1} + \inp{1}{\nu} &= \inp{1}{\mu_h} + \inp{1}{\bmu_0} \\
\inp{1}{\nu} &= \inp{1}{\mu_h} - \inp{1}{\bmu_1} = \tau - \inp{1}{\bmu_1}. \nonumber
\end{align}
Given that $\bmu_1$ and $\nu$ are nonnegative Borel measures and cannot have negative masses, the mass $\inp{1}{\nu}$ is constrained within $[0, \tau]$. 
All masses are demonstrated to be finite, thus proving boundedness.
\end{proof}

\begin{rmk}
Neglecting the History-Validity constraint \eqref{eq:peak_delay_hist} allows for $\mu_h$ in \eqref{eq:consistency_substitute} to have infinite mass, violating the boundedness principle.
\end{rmk}

\begin{thm} The optima in \eqref{eq:peak_delay_lmi} will converge as $\lim_{d \rightarrow \infty} p^*_d = p^*$ to \eqref{eq:peak_delay_meas} under assumptions A1-A6.\end{thm}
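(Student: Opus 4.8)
The plan is to prove the two inequalities $\lim_{d\to\infty} p_d^* \geq p^*$ and $\lim_{d\to\infty} p_d^* \leq p^*$ separately, following the standard convergence argument for the moment-\ac{SOS} hierarchy \cite{lasserre2009moments}. First I would dispatch the easy direction together with monotonicity. Any tuple of measures feasible for the measure \ac{LP} \eqref{eq:peak_delay_meas} possesses genuine moment sequences; truncating these to degree $2d$ yields pseudo-moments that satisfy all the linear constraints \eqref{eq:peak_delay_lmi_prob}--\eqref{eq:peak_delay_lmi_cons} (each being a finite linear combination of moments) and all the localizing-matrix constraints \eqref{eq:peak_delay_lmi_supp_beg}--\eqref{eq:peak_delay_lmi_supp_end} (each being a necessary condition satisfied by every representing measure), with matching objective $L_{\bm^p}(p) = \inp{p}{\mu_p}$. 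Hence \eqref{eq:peak_delay_lmi} is a relaxation of \eqref{eq:peak_delay_meas}, so $p_d^* \geq p^*$ for every $d$; since raising the order only adds constraints, $\{p_d^*\}$ is nonincreasing and bounded below by $p^*$, the limit exists, and $\lim_{d\to\infty} p_d^* \geq p^*$.

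The substance is the reverse inequality, obtained by extracting a feasible measure solution from the relaxations. For each $d$ let $\bm^{(d)} = (\bm^h, \bm^0, \bm^p, \bar{\bm}^0, \bar{\bm}^1, \bm^\nu)$ be an optimal pseudo-moment tuple of \eqref{eq:peak_delay_lmi}. The key step is a uniform entrywise bound. Lemma \ref{lem:moment_bound} bounds the mass (the zeroth pseudo-moment) of each measure independently of $d$, and under assumption A6 each support set carries a ball constraint $R - \norm{x}_2^2 \geq 0$. Combining the mass bound with positive-semidefiniteness of the moment and localizing matrices, the standard recursive estimate bounds the diagonal pseudo-moments by (mass)$\cdot R^{\abs{\alpha}}$, and Cauchy--Schwarz applied to the \ac{PSD} moment matrix controls the off-diagonal entries, giving $\abs{\bm^{(d)}_\alpha} \leq C_\alpha$ with $C_\alpha$ depending only on $\alpha$ and not on $d$. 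A diagonal argument then produces a subsequence along which every pseudo-moment converges entrywise to a limit sequence $\bm^{(\infty)}$.

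It remains to show $\bm^{(\infty)}$ is feasible for \eqref{eq:peak_delay_meas} with objective $\lim_{d\to\infty} p_d^*$. Each localizing-matrix inequality is a closed condition preserved under entrywise limits, so the limiting moment and localizing matrices remain \ac{PSD}; since the support sets are Archimedean (A6), Putinar's Positivstellensatz guarantees that each limiting sequence is the genuine moment sequence of a measure supported on the corresponding set \cite{lasserre2009moments}, yielding a tuple $(\mu_h, \mu_0, \mu_p, \bmu_0, \bmu_1, \nu)$ as in \eqref{eq:weak_solution}. Each linear constraint \eqref{eq:peak_delay_lmi_prob}--\eqref{eq:peak_delay_lmi_cons} involves only finitely many moments of fixed degree, hence passes to the limit, recovering the probability, History-Validity, Liouville \eqref{eq:peak_delay_flow}, and Consistency \eqref{eq:peak_delay_cons} constraints; thus the limit is an \ac{MV}-solution in the sense of Defn. \ref{defn:mv_solution}. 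Finally $L_{\bm^p}(p) \to \inp{p}{\mu_p}$ because the objective in \eqref{eq:peak_delay_lmi_obj} is a fixed linear functional of finitely many entries of $\bm^p$. Consequently $\lim_{d\to\infty} p_d^* = \inp{p}{\mu_p} \leq p^*$, which closes the argument.

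I expect the uniform entrywise boundedness of the pseudo-moments to be the main obstacle, since it is precisely what legitimizes the compactness and diagonal extraction. The argument fundamentally requires translating the finite mass bounds of Lemma \ref{lem:moment_bound} into order-independent bounds on individual pseudo-moments, and this is exactly the step where the Archimedean strengthening of compactness in A6 --- rather than mere compactness from A1 --- becomes indispensable. Once this bound is secured, the remaining verifications (closedness of the \ac{PSD} cone, Putinar representability, and stability of the finitely-supported linear constraints under entrywise limits) are routine.
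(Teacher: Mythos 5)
Your proof is correct in substance, but it takes a different route from the paper: the paper's entire proof is a one-line appeal to Corollary 8 of \cite{tacchi2022convergence}, a general convergence theorem for moment-\ac{SOS} hierarchies applied to infinite-dimensional measure \acp{LP}, with Lemma \ref{lem:moment_bound} supplying the mass-boundedness hypothesis that the cited result requires. What you have done is reconstruct, from scratch, essentially the argument that lies inside that citation: truncation shows \eqref{eq:peak_delay_lmi} relaxes \eqref{eq:peak_delay_meas} (giving $p^*_d \geq p^*$ and monotonicity), uniform entrywise pseudo-moment bounds follow from the mass bounds plus the Archimedean ball constraints, a diagonal extraction yields a limit sequence, Putinar representability converts it into a genuine tuple of measures, and the finitely-supported linear constraints pass to the limit. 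The trade-off is clear: the paper's citation is shorter and inherits the bookkeeping from a result engineered for exactly this situation, while your self-contained version makes visible \emph{why} A6 (Archimedeanity, not mere compactness) and Lemma \ref{lem:moment_bound} are the two load-bearing hypotheses --- a point the paper's one-liner obscures. Two small touch-ups you should make explicit. First, Lemma \ref{lem:moment_bound} is stated for measures, not pseudo-moments; you need to observe that its proof uses only the test functions $v=1$, $v=t$, and $\phi=1$, which appear among the monomial constraints of \eqref{eq:peak_delay_lmi} already at order $d=1$, so the same algebra bounds the zeroth pseudo-moments uniformly in $d$. Second, the limit sequence satisfies the Liouville and Consistency constraints only against monomial test functions; to conclude that the representing measures form an \ac{MV}-solution in the sense of Definition \ref{defn:mv_solution} you must invoke density of polynomials in $C^1([0,T]\times X)$ (in the $C^1$ norm, since $\Lie_f v$ involves derivatives of $v$) on the compact supports guaranteed by A1. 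Both are routine, and your identification of the uniform entrywise bound as the crux is accurate; note also that these same bounds give compactness of the order-$d$ feasible set, which justifies your assumption that the \ac{LMI} optimum is attained (otherwise work with $\varepsilon$-optimal tuples).
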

\begin{proof}
    This follows from Corollary 8 of \cite{tacchi2022convergence} under the boundedness condition in Lemma \ref{lem:moment_bound}.
\end{proof}

\begin{rmk}
    Assumption $A6$ can be generalized to cases where the sets ($H_0$, $X_0$, $X$) are the unions of \ac{BSA} sets. As an example, consider $H_0 = H_0^1 \cup H_0^2$ in which $\pi^t H_0^1 = [-\tau, -\tilde{\tau}]$ and $\pi^t H_0^2= [-\tilde{\tau}, 0]$ for some $\tilde{\tau} \in (0, \tau)$. Then the pseudo-moments $\bm^h = \bm^h_1 + \bm^h_2$ can be implicitly constructed from $\M_d((H_0^1)\bm_1^h), \;\M_d((H_0^2)\bm_2^h) \succeq 0$.
\end{rmk}

\subsection{Computational Complexity}

% \urg{Document the sizes of the Gram matrices}

Table \ref{tab:moment_size} lists the size of the  order-$d$ \ac{PSD} moment matrices associated with the pseudo-moment sequences $(\bm^h, \bm^0, \bm^p, \bar{\bm}^0, \bar{\bm}^1, \bm^\nu)$.

\begin{table}[h]
    \centering
    \caption{Size of Moment Matrices in \ac{LMI} \eqref{eq:peak_delay_lmi}}
    \begin{tabular}{rccc}
         Matrix: &$\M_d{(\bm^0)}$ & $\M_{\tilde{d}}{(\bm^p)}$ & $\M_d{(\bm^h)}$  \\
         Size: & $\binom{n+d}{d}$ &$\binom{n+1+d}{d}$ & $\binom{n+1+\tilde{d}}{\tilde{d}}$ \\ \\
         Matrix: &$\M_d{(\bar{\bm}^0)}$ & $\M_{\tilde{d}}{(\bar{\bm}^1)}$ & $\M_d{(\bm^{\nu})}$  \\
         Size: & $\binom{2n+1+\tilde{d}}{\tilde{d}}$ &$\binom{2n+1+\tilde{d}}{\tilde{d}}$ & $\binom{n+1+\tilde{d}}{\tilde{d}}$ \\
    \end{tabular}
    \label{tab:moment_size}
\end{table}
The largest size written in Table \ref{tab:moment_size} is $\binom{2n+1+\tilde{d}}{\tilde{d}}$, which occurs with the pseudo-moment sequences $(\bar{\bm}^0, \bar{\bm}^1)$ associated to the two joint occupation measures $(\bmu_0, \bmu_1)$. Equality constraints between entries of the moment matrices must be added to convert the \ac{LMI} into \iac{SDP} for use in symmetric-cone Interior Point Methods. The per-iteration complexity of solving an \ac{SDP}  derived from an order-$d$ \ac{LMI} involved in the moment-\ac{SOS} hierarchy  scales as $O(n^{6d})$ \cite{lasserre2009moments} with $n$. In the case of \ac{LMI} \eqref{eq:peak_delay_lmi}, the computational complexity of solving \eqref{eq:peak_delay_lmi} will scale approximately as $(2n+1)^{6 \tilde{d}}$  (based on $\bar{\bm}^0, \bar{\bm}^1$).

\subsection{Distance Estimation}

The distance estimation framework of \cite{miller2022distance_short} may also be applied towards \acp{DDE}. The \ac{DDE} distance estimation program with an unsafe set $X_u \subset X$, a metric $c(x, y)$, and point-unsafe-set distance function $c(x; X_u) = \inf_{y \in X_u} c(x, y)$ is
\begin{subequations}
\label{eq:dist_delay_traj}
    \begin{align}
    P^* = & \inf_{t^* \in [0, T], \; x_h(\cdot)} c(x(t^* \mid x_h); X_u) &\\
    & \dot{x} =  f(t, x(t), x(t-\tau)) & & \forall t \in [0, T]  \\
    & x(t) = x_h(t) & & \forall t \in [-\tau, 0]\\
     & x_h(\cdot) \in \mathcal{H}.
    \end{align}
\end{subequations}

Safety in program \eqref{eq:dist_delay_traj} is measured pointwise: a trajectory is safe if $x(t \mid x_h) \not \in X_u$ for every time $t \in [0, T]$. Safety ensuring that the entire history is never contained within $X_u$ ($\exists s \in [-\tau, 0] \mid x(t-s \mid x_h) \not\in X_u \forall t \in [0, T]$) is a more challenging separate problem and will not be considered here.

The \ac{MV}-solution in \eqref{eq:weak_solution} may be applied to \eqref{eq:dist_delay_traj} to create \iac{DDE} distance estimation program by adding a joint probability measure $\eta \in \Mp{X \times X_u}$ (following the procedure from \cite{miller2022distance_short}):
\begin{subequations}
\label{eq:dist_delay_meas}
    \begin{align}
        c^* = & \ \inf \quad \inp{c}{\eta} \label{eq:dist_delay_obj} \\
        & \pi^x_\# \mu^p = \pi^x_\# \eta \label{eq:dist_delay_marg}\\
    & \inp{1}{\mu_0} = 1 \label{eq:dist_delay_prob}\\    
    & \pi^{t}_\# \mu_h = \lambda_{[-\tau, 0]} \label{eq:dist_delay_hist}\\   
    & \mu_p = \delta_0 \otimes\mu_0 + \pi^{t x_0}_\# \Lie_f^\dagger (\bmu_0 + \bmu_1) \label{eq:dist_delay_flow}\\        
    & \pi^{t x_1}_\# (\bmu_0 + \bmu_1) + \nu = S^\tau_\#(\mu_h + \pi^{t x_0}_\# \bmu_0) \label{eq:dist_delay_cons}\\ 
    & \eta \in \Mp{X \times X_u} \\
    & \textrm{Measure Definitions from  \eqref{eq:weak_solution}.} \label{eq:dist_delay_def}
    \end{align}
\end{subequations}

The distance estimation program only affects the cost \eqref{eq:dist_delay_obj}. This change is orthogonal to the modification in dynamics necessary to create \iac{DDE} \ac{MV}-solution from the \ac{ODE} program.

\section{Numerical Examples}
\label{sec:delay_examples}
All experiments were developed in MATLAB 2021a, and code is available at \url{https://github.com/Jarmill/timedelay}. Dependencies include Gloptipoly \cite{henrion2003gloptipoly}, YALMIP \cite{Lofberg2004}, and Mosek \cite{mosek92} in order to formulate and solve moment-\ac{SOS} \acp{LMI} and \acp{SDP}. 

In this section, a notational convention where $(x_1, x_2)$ correspond to coordinates of $x \in X$ will be used. All sampled histories in visualizations are piecewise-constant inside $H_0$ with 10 randomly-spaced jumps between $[-\tau, 0]$.

% \urg{I will need to write a more comprehensive sampler to generate trajectories}

% \subsection{Peak Estimation}

% \section{Examples of Peak Estimation for Time-Delay Systems}

\label{sec:delay_delay_traj_analysis_peak_example}

% \urg{Load in the peak estimation figures from the BrainPOP presentation.}

\subsection{Epidemic Model}
This section provides an example of \iac{MV}-solution and peak estimation given a single history in a compartmental epidemic model.
Many diseases have incubation periods during which there is a delay between initial infection and infectious potential. In the current COVID-19 pandemic, this incubation period appears to be between 2-14 days, with a median of 5 days \cite{lauer2020incubation}.  The epidemic  dynamics with time delays are
\begin{subequations}
\label{eq:sir_delay}
\begin{align}
    S'(t) &= -\beta S(t) I(t) \\
    I'(t) &= \beta S(t-\tau) I(t-\tau) - \gamma I(t) \\
    R'(t) &= \gamma I(t)
\end{align}
\end{subequations}
There exists also exists a `latent' state $L'(t) = \beta S(t) I(t) - \beta  S(t - \tau) I (t - \tau)$ such that $S + I + R + L = 1$. The setting discussed in this section is $\beta = 0.4, \ \gamma = 0.1, \  \ T = 30$. 

Figures \ref{fig:sir_01} and \ref{fig:sir_02} display simulations of this epidemic model as $\tau$ changes under a constant state history with $R=0$. The black curve in Figures \ref{fig:sir_01} and \ref{fig:sir_02} is the plot of $I(t)$ at $\tau=0$. As the incubation period $\tau$, the time $t^*$ at which the peak is achieved is delayed (moves rightwards) in a monotonically increasing manner. The other colored curves in each plot have delays $\tau \in 1..9$.
In Figure \ref{fig:sir_01} with $I_h = 0.1$, the peak infected population decreases as the delay $\tau$ increases. Conversely in Figure \ref{fig:sir_02} with $I_h = 0.2$, the peak infected population increases as the delay increases.

\begin{figure}[ht]
     \centering
     \begin{subfigure}[b]{0.48\linewidth}
         \centering
         \includegraphics[width=\linewidth]{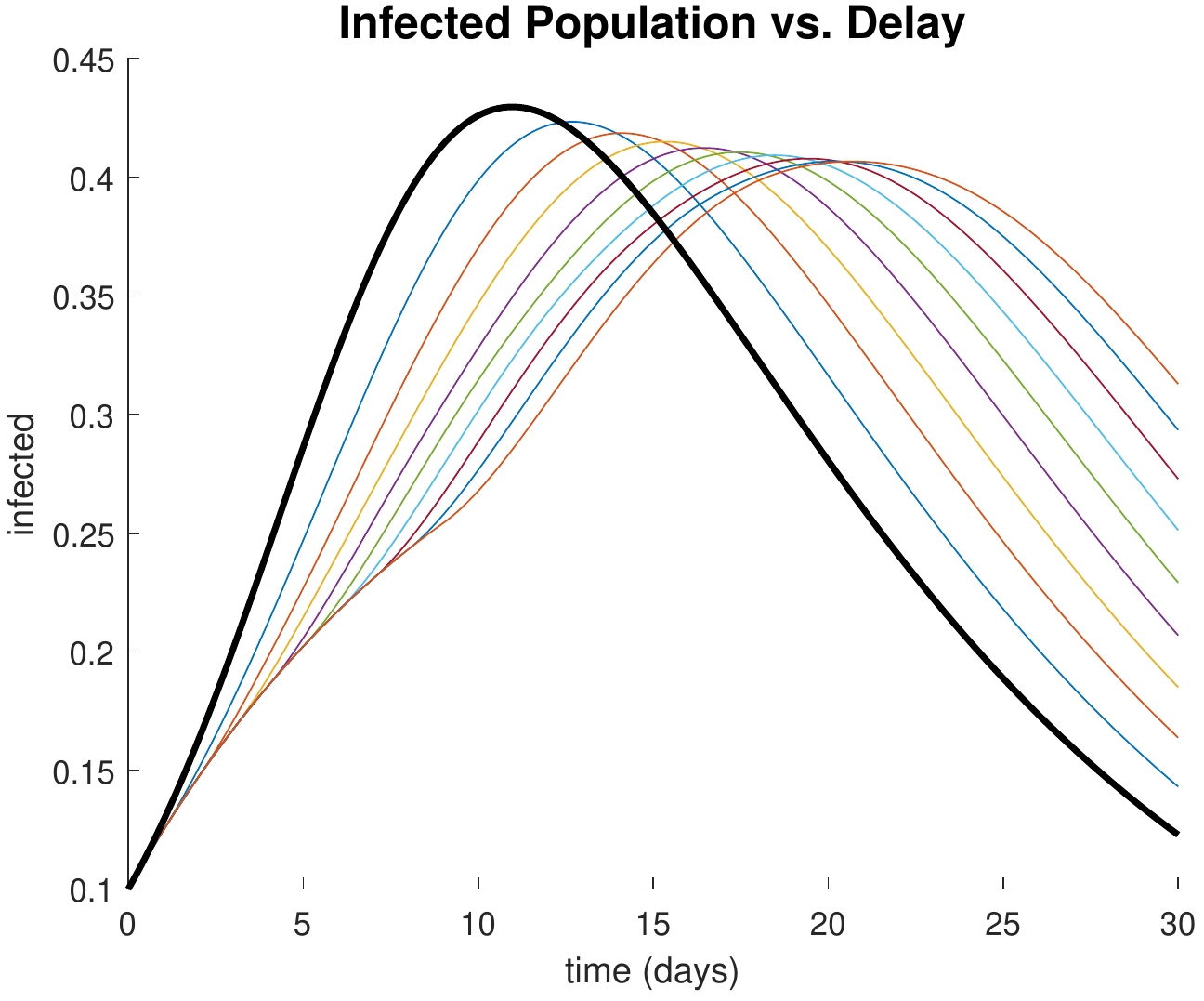}
         \caption{\label{fig:sir_01} $I_h = 0.1$, peak decreases}
         
     \end{subfigure}
     \;
     \begin{subfigure}[b]{0.48\linewidth}
         \centering
         \includegraphics[width=\linewidth]{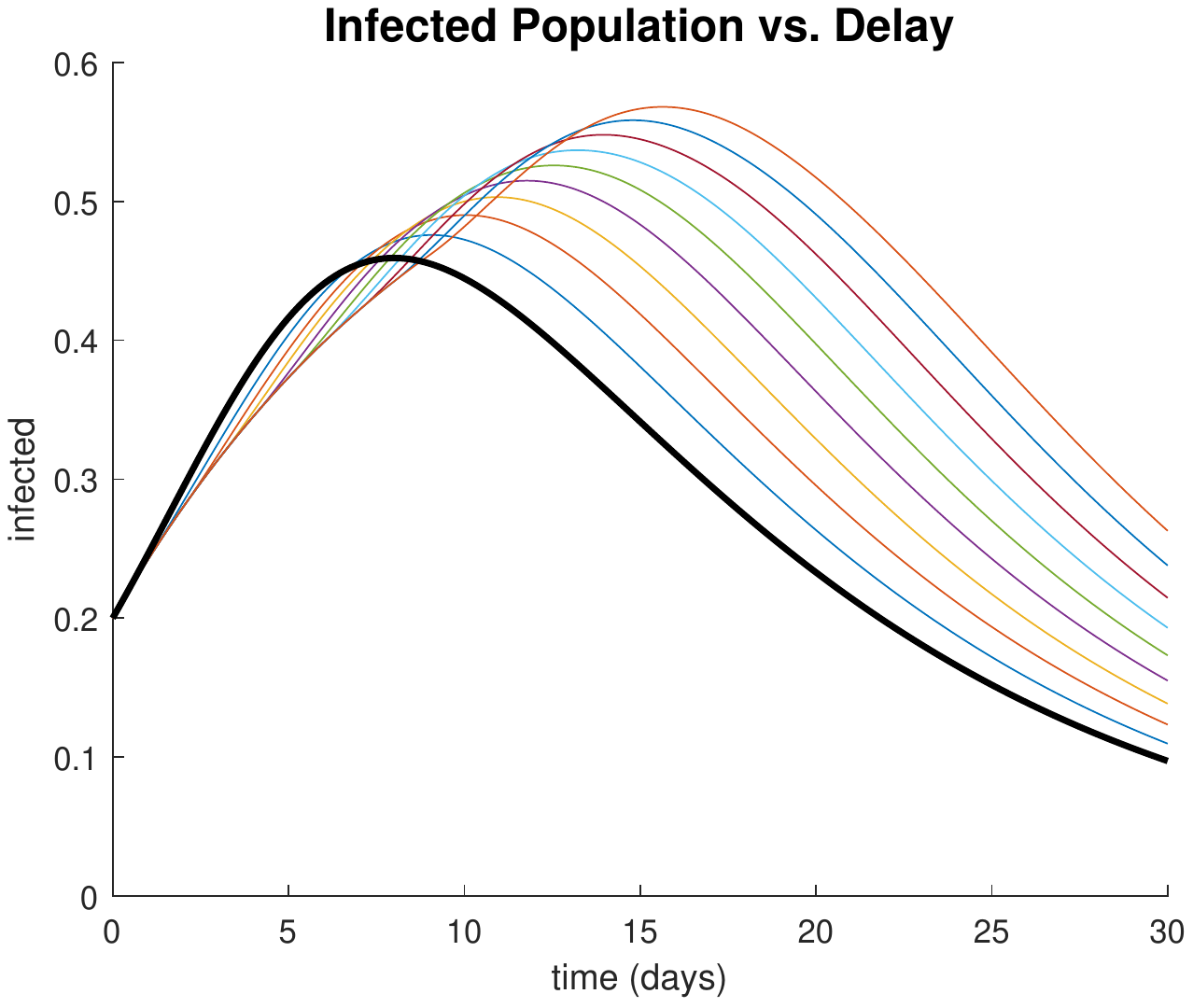}
         \caption{\label{fig:sir_02} $I_h = 0.2$, peak increases}
     \end{subfigure}
      \caption{\label{fig:sir_peak} Peak infected population vs. time delay}
\end{figure}

For the peak estimation example, a  constant history is assumed with  an initial infection rate of $I_h = 0.2$ and an incubation period of $\tau = 9$, forming the initial history $S(t) = 1-I_h, I(t) = I_h, R(t) = 0$ between $t \in [-9, 0]$.

For numerical purposes the dynamics are scaled such that $\tilde{t} \in [0, 1]$ with an effective delay of $\tilde{\tau} = \tau / T = 0.3$.
Only the $x = (S, I)$ subsystem is considered to form the state set $X = \{S \geq 0, I \geq 0, S+I \leq 1\}$, and the joint occupation measures $(\bar{\mu}_0, \bar{\mu}_1)$ have variables  $(t, S_0, I_0, S_1, I_1)$. 

% Given a monomial $t^\beta S^{\alpha_1} I^{\alpha_2}$, the LMI estimated moment is $\hat{\mathbf{m}}_{\alpha \beta}: \inp{t^\beta S^{\alpha_1} I^{\alpha_2}}{\bar{\mu}}$.
% The reference moments $\mathbf{m}_{\alpha \beta} = \int_{\tilde{t}=0}^1 (\tilde{t})^\beta S(T \tilde{t})^{\alpha_1} I(T \tilde{t})^{\alpha_2} d\tilde{t}$ were calculated by Simpson's rule on 1000 equally spaced points in $\tilde{t} \in [0, 1]$.
% Figure \ref{fig:sir_weak} shows errors $\abs{\mathbf{m}_{\alpha \beta} - \hat{\mathbf{m}}_{\alpha \beta}}$ in LMI moment approximations to the trajectory from times $\tilde{t} \in [0, 1]$, indexed as the monomial powers increase $(\alpha, \beta)$ in a graded lexicographical order w.r.t $(\tilde{T}, S, I)$. 
% The largest PSD block at order-3 has size 126, and at order-4 the largest PSD block has size 252.

% \begin{figure}[h]
% \centering
%      \begin{subfigure}[b]{0.48\linewidth}
%          \centering
%          \includegraphics[width=\linewidth]{}
%          \caption{Order-3 moments, 2-norm$= 1.3062\times 10^{-4}$. }
%          \label{fig:sir_weak_3}
%      \end{subfigure}
%      \;
%      \begin{subfigure}[b]{0.48\linewidth}
%          \centering
%          \includegraphics[width=\linewidth]{}
%          \caption{Order-4 moments, 2-norm $= 4.2534 \times 10^{-5}$}
%          \label{fig:sir_weak_4}
%      \end{subfigure}
%       \caption{\label{fig:sir_weak} Error in moment approximation}
% \end{figure}

Peak estimation is employed to bound the maximum infection rate over the course of the epidemic. This peak estimation program maximizes $\inp{I}{\mu_p}$ under the constraint that $(\mu_p, \bar{\mu}, \{\nu_0, \nu_1\}, \hat{\nu}_1)$ is a free-time \ac{MV}-solution of dynamics \eqref{eq:sir_delay}.

Figure \ref{fig:sir_peak_recovery} displays the output of peak estimation, where the order-3 LMI relaxation bounds the maximal infection rate at 56.89\%. The moment matrix $\M_3[y_p]$ is approximately rank-1 (second largest eigenvalue of $\M_3[y_p] = 2.448\times 10^{-5}$), and the extracted optimum from $\M_3[y_p]$ by Algorithm 1 of \cite{miller2020recovery} is $(S^*, I^*) = (0.0561, 0.5689)$ occurring at $t^*=15.636$ days.
\begin{figure}[h]
    \centering
    \includegraphics[width=0.6\linewidth]{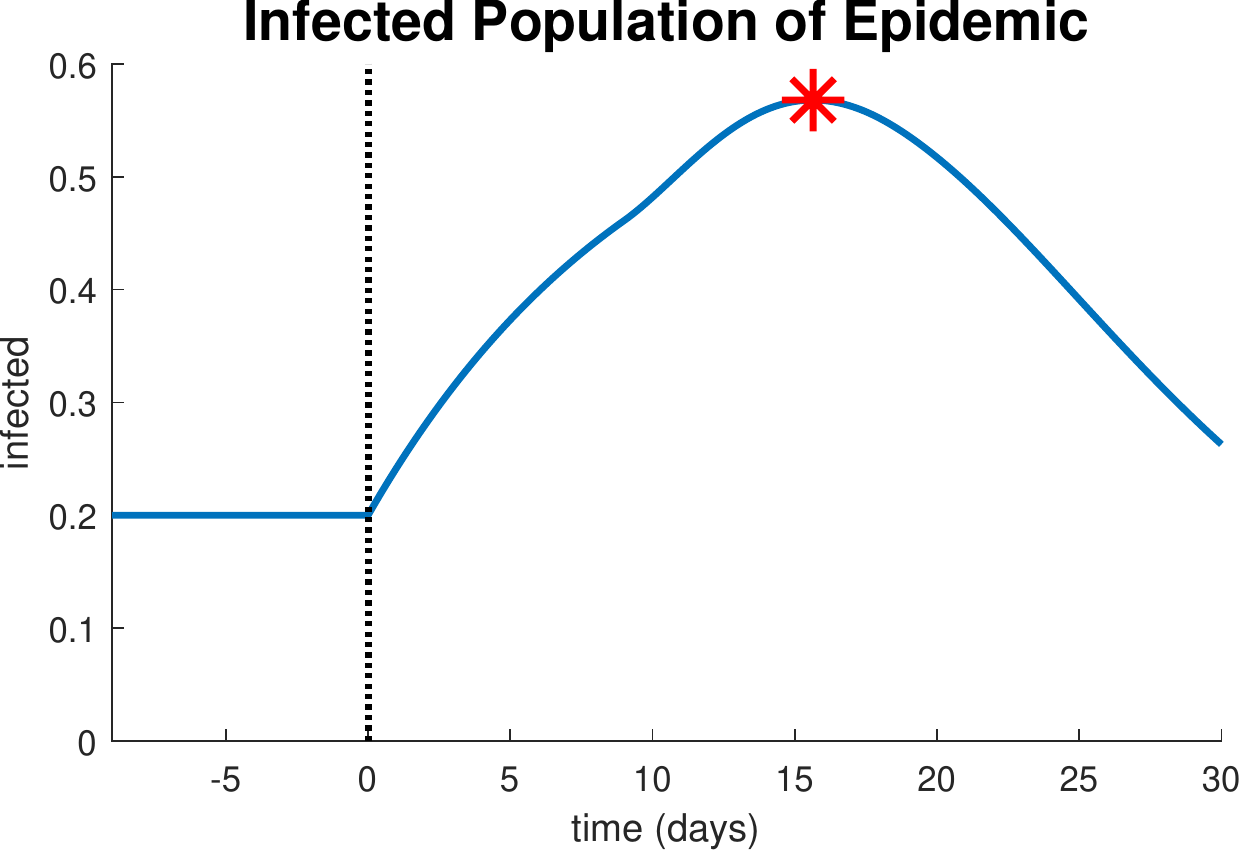}
    \caption{\label{fig:sir_peak_recovery} SIR peak estimation and recovery at order 3}
    
\end{figure}
\subsection{Delayed Flow System}
\label{ex:delay_flow}
A time-delayed version of the Flow system from \cite{prajna2004safety} is
\begin{equation}
\label{eq:flow_delay}
    \dot{x}(t) = \begin{bmatrix}x_2(t) \\ -x_1(t-\tau) - x_2(t) + x_1(t)^3/3
        \end{bmatrix}.
\end{equation}

Figure \ref{fig:flow_delay_comparision} plots the delayed Flow system \eqref{eq:flow_delay} without lag ($\tau=0$ in blue) and with a lag ($\tau=0.75$ in orange) starting from the constant initial history $x_h(t) = (1.5, 0), \ \forall t \in [-\tau, 0]$ (black circle). 
% The system with lag spirals 
    \begin{figure}[ht]
        \centering
        \includegraphics[width=0.5\linewidth]{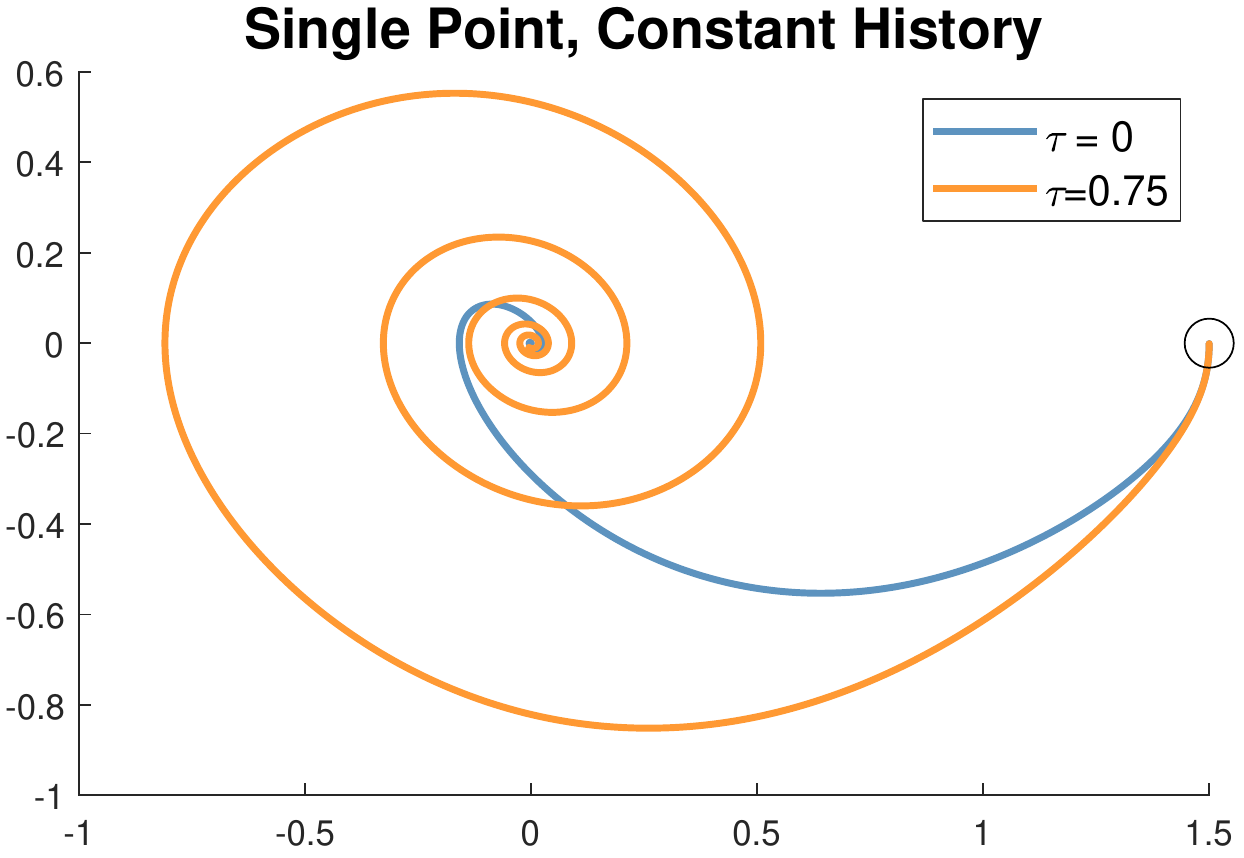}
        \caption{\label{fig:flow_delay_comparision}Comparison of delayed Flow systems \eqref{eq:flow_delay} with lags $\tau=0$ and $\tau= 0.75$ in times $t \in [0, 20]$ }
    \end{figure}

The time-zero set of allowable histories is $X_0 = \{x \in \R^2 \mid  \ (x_1-1.5)^2 + x_2^2 \leq 0.4^2\}$. The history class $\hs$ will be the set of functions $x_h \in PC([-\tau, 0])$ whose graphs $(t, x(t))$ are contained within  the cylinder $H_0 = [-0.75, 0] \times X_0$. No further requirements of continuity are posed on histories in $\hs$.     
The considered peak estimation aims to find the minimum value of $x_2$ (maximize $p(x)=-x_2)$
for trajectories following \eqref{eq:flow_delay} starting from $H_0$, within the state set 
$X = [-1.25, 2.5] \times [-1.25, 1.5]$ and time horizon $T=5$. The first five bounds on the maximum value of $-x_2$ by solving \eqref{eq:peak_delay_lmi} are $p^*_{1:5}=[1.25, 1.2183, 1.1913, 1.1727, 1.1630]$.

Figure \ref{fig:flow_delay_peak} plots trajectories and peak information associated with this example. The black circle is the initial set $X_0$. The initial histories inside $X_0$ are plotted in grey. These sampled histories are piecewise constant with 10 uniformly spaced jumps (moving to a new point uniformly sampled in $X_0$) within $[-0.75, 0]$. The cyan curves are the \ac{DDE} trajectories of \eqref{eq:flow_delay} starting from the grey histories. The red dotted line is the $p^*_5$ bound on the minimum vertical coordinate of a point on any trajectory starting from $\hs$ up to $T=5$.
    \begin{figure}[ht]
        \centering
        \includegraphics[width=0.5\linewidth]{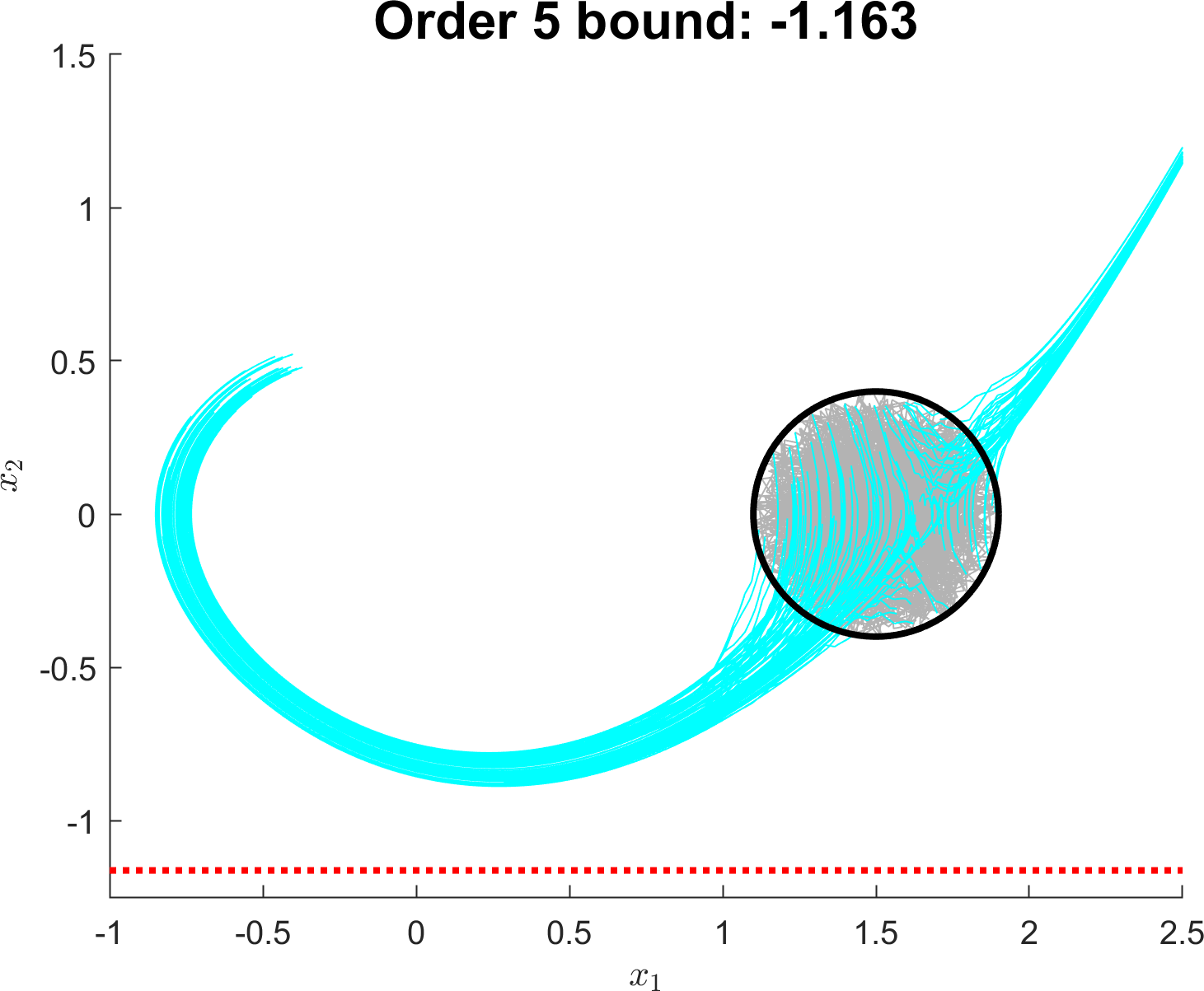}
        \caption{\label{fig:flow_delay_peak} Minimize $x_2$ on the delayed Flow system \eqref{eq:flow_delay}}
    \end{figure}

Distance estimation is performed on the Flow system \ref{eq:flow_delay} with an $L_2$ metric, a time horizon of $T=8$, arbitrarily varying histories in $H_0$, a time horizon of $\tau=0.5$, and a half-circle unsafe set  $X_u = \{x \mid 0.5^2 \geq (x_1+0.5)^2 + (x_2+1)^2, (1.5+x_1+x_2)$. The recovered distance estimates up to degree 4 from \ac{SDP} relaxations of \eqref{eq:dist_delay_meas} are $c^*_{1:4} = [1.1897 \times 10^{-4}, \  4.0420 \times 10^{-4}, \ 0.1572, \ 0.1820]$. 
Figure \ref{fig:flow_delay_dist} plots the set $X_u$ in red along with its $c^*_4 = 0.1820$ certified distance contour.
    \begin{figure}[ht]
        \centering
        \includegraphics[width=0.5\linewidth]{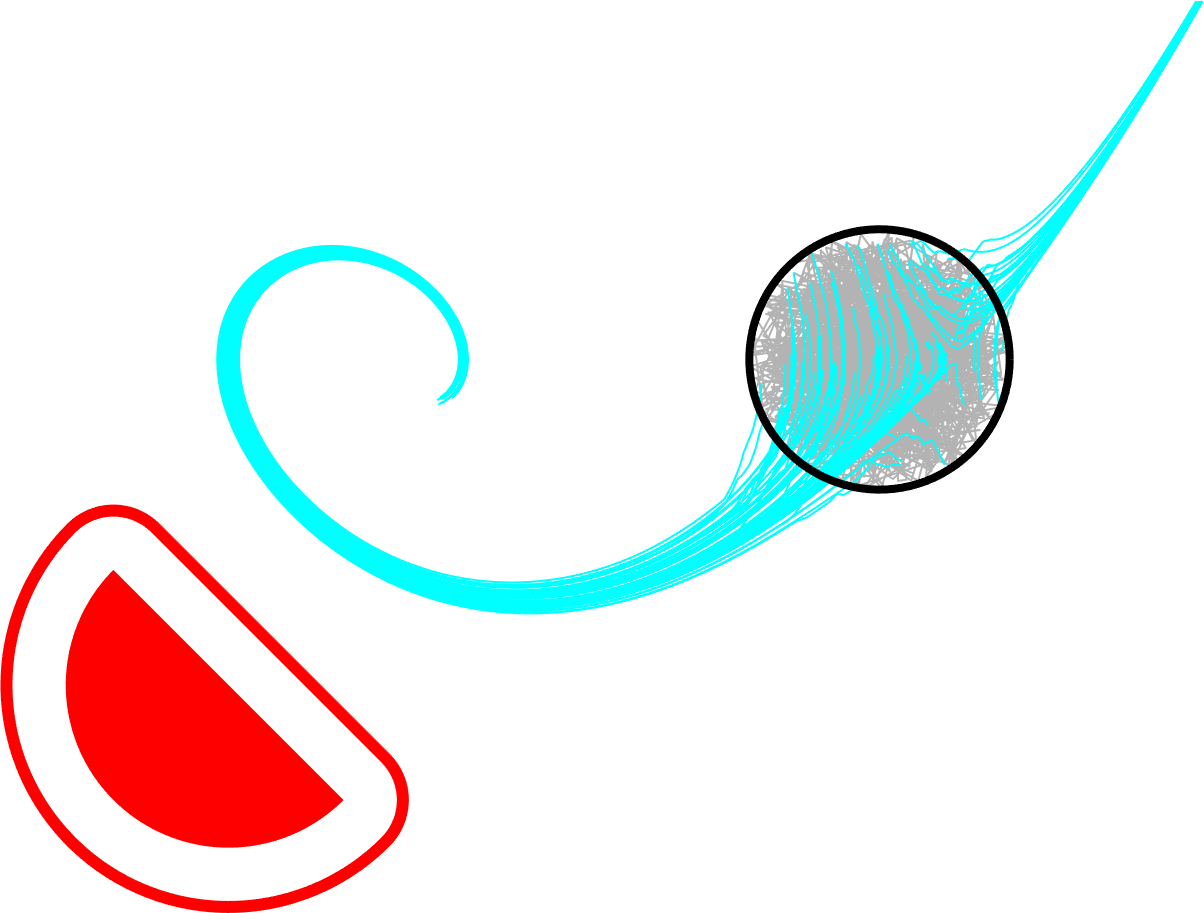}
        \caption{\label{fig:flow_delay_dist} Minimize $c(x; X_u)$ on the delayed Flow system \eqref{eq:flow_delay}}
    \end{figure}

    % Future work will involve more examples of peak estimation in time-delay systems.
    
    \subsection{Delayed Time-Varying System}
    
    This example involves peak estimation of \iac{DDE} version of the time-varying Example 2.1 of \cite{fantuzzi2020bounding} with
    \begin{equation}
\label{eq:time_var_delay}
    \dot{x}(t) = \begin{bmatrix}x_2(t) t - 0.1 x_1(t) - x_1(t-\tau) x_2(t-\tau)  \\ -x_1(t) t - x_2(t) + x_1(t) x_1(t-\tau)
        \end{bmatrix}.
\end{equation}

The considered support parameters are $\tau = 0.75, \ T = 5, $ and $X = [-1.25, 1.25] \times [-0.75, 1.25]$.
The time-zero set is the disk $X_0 = \{x\in \R^2 \mid (x_1+0.75)^2 + x_2^2 \leq 0.3^2\}$. The only restriction on allowable histories $\hs$ is that their graphs are contained in the history set $H_0 = [-0.75, 0] \times X_0$. 

Solving the \ac{SDP} associated with the \ac{LMI} \eqref{eq:peak_delay_lmi} to maximize the peak function $p = x_1$ yields the sequence of five bounds $p^*_{1:5} = [1.25, 1.25, 1.1978, 0.8543, 0.718264618]$. 
Figure \ref{fig:time_var_peak} plots system trajectories and the $p^*_5$ bound on $x_1$ using the same visual convention as Figure \ref{fig:flow_delay_peak} (black circle $X_0$, grey histories $x_h(t)$, cyan trajectories $x(t \mid x_h)$, red dotted line $x_1 = p^*_5$).

    \begin{figure}[ht]
        \centering
        \includegraphics[width=0.5\linewidth]{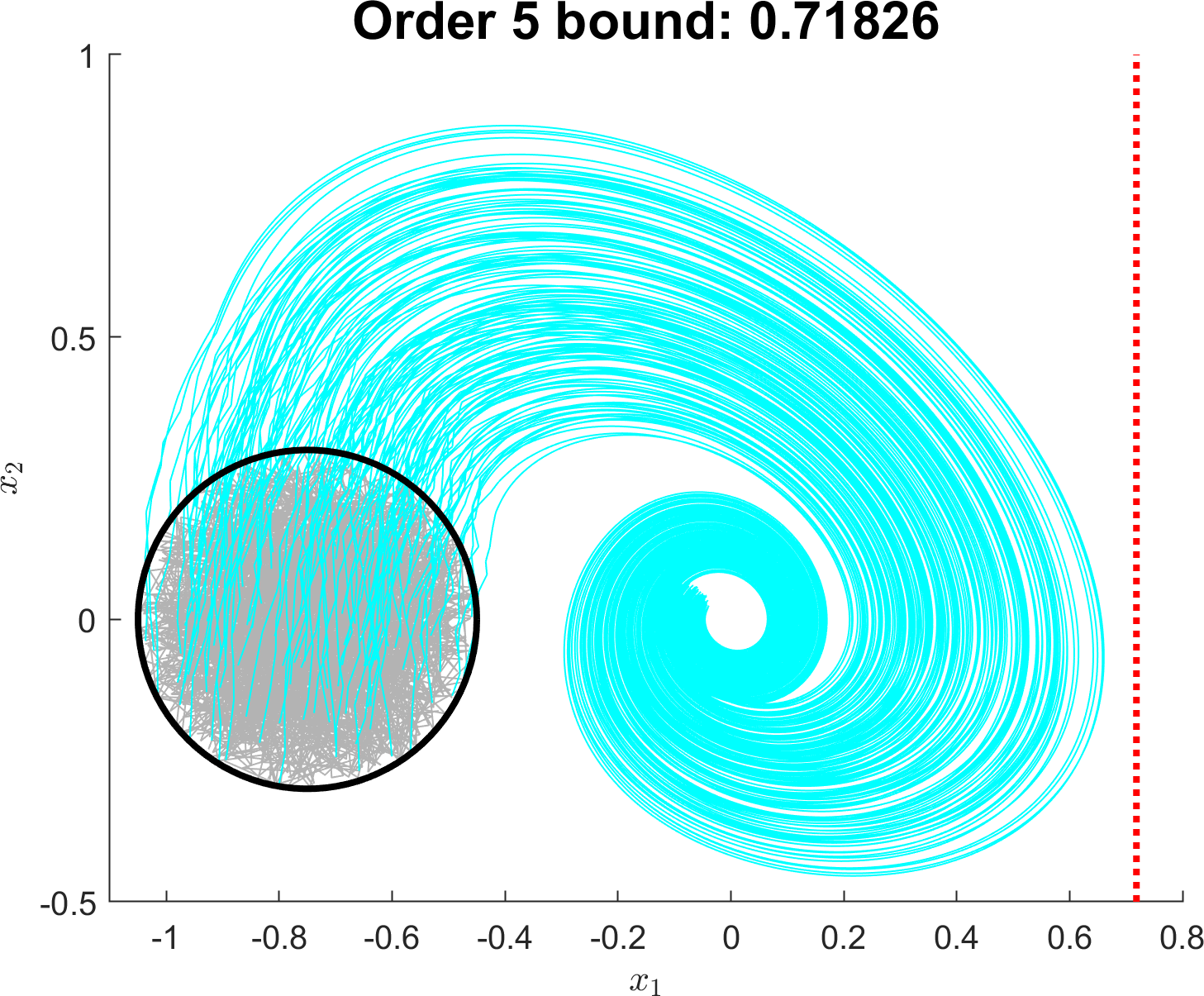}
        \caption{\label{fig:time_var_peak} Maximize $x_1$ on the delayed time-varying \eqref{eq:time_var_delay}}
    \end{figure}

Figure \ref{fig:time_var_peak_3d} plots the corresponding trajectory and bound information in 3d $(t, x_1, x_2)$. The black circles denote the boundary of $H_0$. The history structure inside $H_0$ between times $[-0.75, 1]$ is clearly visible in grey.

        \begin{figure}[ht]
        \centering
        \includegraphics[width=0.6\linewidth]{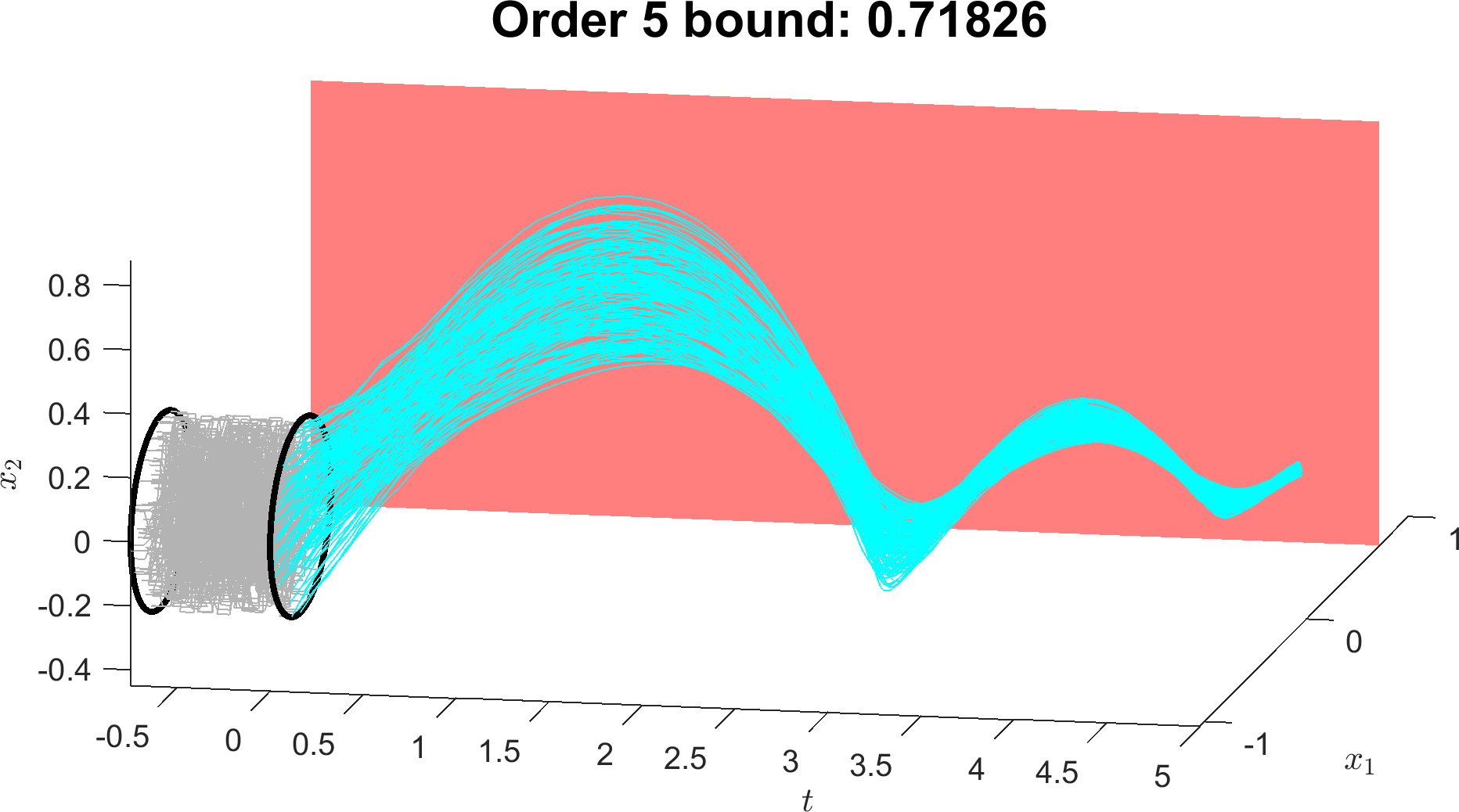}
        \caption{\label{fig:time_var_peak_3d} A 3d plot of \eqref{eq:time_var_delay} and its  $x_1$ bound }
    \end{figure}

The peak estimation of $p = x_2$ under the same system parameters leads to the sequence of five bounds $p^*_{1:5} = [1.25, 1.25, 0.9557, 0.9138, 0.9112].$ 
\section{Extensions}
\label{sec:extensions_delay}

This section discusses several extensions to the \ac{DDE} peak estimation framework.

\subsection{Shaping Constraints}

% \subsection{Shaping Constraints}
\label{sec:delay_shaping}
% \urg{possibly put this on Arxiv}.
Assumption A5 imposes that the class $\mathcal{H}$ is graph-constrained. 
% Structure may 
Some applications involve further structure in the function class $\mathcal{H}$, such as requiring that the histories in $\mathcal{H}$ are constant in time between $t \in [-\tau_r, 0]$. Examples of these constant histories for the system in \eqref{eq:time_delay_fig} staring within the black box ($H_0$) are plotted in Figure \ref{fig:delay_shape_const}. 

\begin{figure}[ht]
    \centering
    \includegraphics[width=0.5\linewidth]{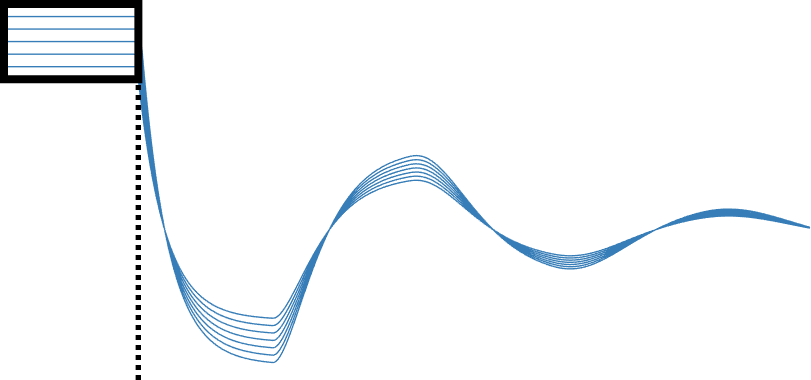}
    \caption{Constant histories in the black box}
    \label{fig:delay_shape_const}
\end{figure}

These types of structure in histories may be realized by adding constraints to $\mu_h$. A method to ensure that the histories in $\mu_h$ are constant in time between $t \in [-\tau, 0]$ is by requiring $\mu_h$ to be the occupation measure of the system $\dot{x}=0$ through a Liouville equation
\begin{align}
    \inp{v(0, x)}{\mu_0} &= \inp{\partial_t v(t, x)}{\mu_h} +\inp{v(-\tau, x)}{\mu_0}& \forall v \in C([-\tau_r, 0]).
    \label{eq:weak_shaping_open}
\end{align}

\subsection{Multiple Time Delays}

\Iac{DDE} with multiple time-delays $0 < \tau_1 < \tau_2 < \ldots < \tau_r$ for $(r, \tau_r)$ finite and a history $x_h \in PC([-\tau_r, 0], X)$ is
\begin{align}
    \dot{x}(t) &= f(t, x(t), x(t-\tau_1), \ldots, x(t - \tau_r)) \label{eq:delay_multiple}\\
    x(t) &= x_h(t), \quad \forall t \in [-\tau_r, 0]. \nonumber
\end{align}

A peak estimation problem for \eqref{eq:delay_multiple} with history class $\mathcal{H}$ and objective $p(x)$ is
\begin{subequations}
\label{eq:peak_delay_multi_traj}
    \begin{align}
    P^* = & \sup_{t^* \in [0, T], \; x_h(\cdot)} p(x(t^* \mid x_h)) &\\
    & \dot{x} =  f(t, x(t), x(t-\tau_1), \ldots x(t-\tau_r)) & & \forall t \in [0, T] \label{eq:delay_dynamics_multi} \\
    & x(t) = x_h(t) & & \forall t \in [-\tau_r, 0]\\
     & x_h(\cdot) \in \mathcal{H}.
    \end{align}
\end{subequations}

A multiple-time-delay \ac{MV}-solution for the peak estimation problem \eqref{eq:peak_delay_multi_traj} is (for  all $i=1..r$):
\begin{subequations}
\label{eq:weak_solution_multi}
    \begin{align}
        &\textrm{History}  & \mu_{h i} &\in \Mp{H_0 \cap ([-\tau_{i}, -\tau_{i-1}] \times X)}\label{eq:weak_solution_multi_history} \\
       & \textrm{Initial} & \mu_0 &\in \Mp{X_0} \\
       &\textrm{Peak}  &\mu_p &\in \Mp{[0, T] \times X} \\
       &\textrm{Time-Slack} & \nu_i &\in \Mp{[0, T] \times X} \label{eq:weak_solution_multi_slack} \\
        &\textrm{Occupation Start }  & \bmu_0 &\in \Mp{[0, T-\tau] \times X^2} \\        
        &\textrm{Occupation End }  & \bmu_i &\in \Mp{[T-\tau_{i}, T-\tau_{i-1}] \times X^2}.
    \end{align}
\end{subequations}

The Lie derivative operator $\Lie_f$ with respect to \eqref{eq:delay_multiple} for $v \in C^1([0, T] \times X)$ is
\begin{equation}
    \Lie_f v(t, x_0) = \partial_t v(t, x_0) + f(t, x_0, x_1, \ldots, x_r) \cdot \nabla_{x_0} v(t, x_0).
\end{equation}

The multiple-time-delay peak estimation \ac{LP} for \eqref{eq:peak_delay_multi_traj} problem of $p(x)$ is

\begin{subequations}
\label{eq:peak_delay_multi_meas}
    \begin{align}
        p^* = & \ \sup \quad \inp{p}{\mu_p} \label{eq:peak_delay_multi_obj} \\
    & \inp{1}{\mu_0} = 1 \label{eq:peak_delay_multi_prob}\\    
    & \pi^{t}_\# \mu_{hi} = \lambda_{[-\tau_{i}, -\tau_{i-1}]} & & \forall i =1..r \label{eq:peak_delay_multi_hist}\\   
    & \mu_p = \delta_0 \otimes\mu_0 + \pi^{t x_0}_\# \Lie_f^\dagger (\bmu_0 + \textstyle \sum_{i=1}^r \bmu_i) \label{eq:peak_delay_multi_flow}\\        
    & \pi^{t x_1}_\# (\bmu_0 + \textstyle \sum_{i=1}^r \bmu_i) + \nu_i = S^{\tau_i}_\#(\sum_{j=1}^i \mu_{h j} + \pi^{t x_0}_\# (\bmu_0 + \sum_{j=1}^{i-1} \bmu_{i} ))  & & \forall i = 1..r\label{eq:peak_delay_multi_cons}\\ 
    & \textrm{Measure Definitions from  \eqref{eq:weak_solution_multi}.} \label{eq:peak_delay_multi_def}
    \end{align}
\end{subequations}

Theorem \ref{thm:delay_upper_bound} can be extended to the multiple-time-delay case to prove that $P^* \leq p^*$ between  \eqref{eq:peak_delay_multi_traj} and \eqref{eq:peak_delay_multi_meas}. Even if Conjecture \ref{conj:delay} holds in the single-delay case, it is unlikely the conjecture is satisfied in the multiple-delay case due to findings in \cite{rosenblueth1992proper}.

\subsection{Uncertainty}

This extension subsection will discuss three types of uncertainty that can affect \acp{DDE} dynamics: time-independent, time-dependent, and unknown-delay.

\subsubsection{Time-independent Uncertainty}

Time-independent uncertainty $\theta \in \Theta$ for a set $\Theta$ can be added to dynamics by adjoining the state $\theta$ following $\dot{\theta}=0$ to \eqref{eq:delay_dynamics}. This same process occurs in \cite{miller2021uncertain} for the \ac{ODE} case.

\subsubsection{Time-dependent Uncertainty}

Time-dependent may be implemented by a Young Measure approach. Given dynamics $\dot{x}(t) = f(t, x(t), x(t-\tau), w(t))$ for $w(t) \in W$, the joint occupation measures representing trajectories are $\bmu_0 \in \Mp{[0, T-\tau] \times X^2 \times W}$ and  $\bmu_1 \in \Mp{[T-\tau, T] \times X^2 \times W}$. No substantial changes are required to the Liouville nor consistency constraints.

Input delays may also be introduced into dynamics with $\dot{x}(t) = f(t, x(t), x(t-\tau), w(t), w(t-\tau))$ under the state history $x_h$ and input history $w_h$ (defined in times $t \in [-\tau, 0]$). The associated joint occupation measures are now $\bmu_0 \in \Mp{[0, T-\tau] \times X^2 \times W^2}$ and   $\bmu_1 \in \Mp{[T-\tau, T] \times X^2 \times W^2}$, each involving variables $(t, x_0, x_1, w_0, w_1)$. The state-input history class is $\hs \in PC([-\tau, 0], X \times W)$, and its history occupation measure now includes an input component $\mu_h \in \Mp{[-\tau, 0] \times X \times W}$.

While the Liouville equation stays the same as \eqref{eq:peak_delay_liou}, the consistency constraint ensures that the $w_1$ coordinate contains a delayed copy of $w_0$,
\begin{align}
    \pi^{t x_1 w_1}_\#(\bmu_0 + \bmu_1) = S^\tau_\#(\mu_h + \pi^{t x_0 w_0} \bmu_0).
\end{align}

\subsubsection{Unknown Delays}

This extension focuses on dynamics where the time-independent (constant) delay is unknown but fixed in the finite range of $\tau \in [\ubar{\tau}, \bar{\tau}]$. The unknown delay $\tau$ must be treated as an additional state with $\dot{\tau} = 0$.

The following sets may be defined:
\begin{subequations}
\begin{align}
    \Omega_{h} &= \{(\tau, t, x) \mid \tau \in [\ubar{\tau}, \bar{\tau}], \ (t, x) \in H_0\mid_\tau \}  \\
    \Omega_{0} &= \{(\tau, t, x_0, x_1) \mid \tau \in [\ubar{\tau}, \bar{\tau}], \ t \in [0, T-\tau], (x_0, x_1) \in X^2 \}\\
    \Omega_{1} &= \{(\tau, t, x_0, x_1) \mid \tau \in [\ubar{\tau}, \bar{\tau}], \ t \in [T-\tau, T], (x_0, x_1) \in X^2 \}.
\end{align}
\end{subequations}

\Iac{MV}-solution in the unknown-delay case has the form:
\begin{subequations}
\label{eq:weak_solution_unknown}
    \begin{align}
        &\textrm{History}  & \mu_h &\in \Mp{\Omega_{h}} \label{eq:weak_solution_unknown_history} \\
        &\textrm{History Slack}  &\bar{\mu}_h &\in \Mp{\Omega_{h}} \label{eq:weak_solution_unknown_history_slack} \\
       & \textrm{Initial} & \mu_0 &\in \Mp{X_0 \times [\ubar{\tau}, \bar{\tau}]} \\
       &\textrm{Peak}  &\mu_p &\in \Mp{[0, T] \times X\times [\ubar{\tau}, \bar{\tau}]} \\
       &\textrm{Time-Slack} & \nu &\in \Mp{[0, T] \times X\times [\ubar{\tau}, \bar{\tau}]} \label{eq:weak_solution_unknown_slack} \\
        &\textrm{Occupation Start }  & \bmu_0 &\in \Mp{\Omega_0} \\        
        &\textrm{Occupation End }  & \bmu_1 &\in \Mp{\Omega_1}.
    \end{align}
\end{subequations}

The Liouville and Consistency constraints in the unknown-delay case are unchanged as compared to the known-delay system (with the new state $\dot{\tau}=0$).

However, the history-validity constraints have the following form,
\begin{subequations}
\label{eq:hist_valid_unknown}
\begin{align}
& \inp{1}{\mu_0} = 1 \label{eq:hist_valid_unknown_mass}\\
&\delta_{t=0} \otimes (\pi^\tau_\# \mu_0) = \delta_{t=-\bar{\tau}} \otimes (\pi^{\tau}_\# \mu_0) + (\partial_t)_\# (\pi^{t \tau}_\# (\mu_h + \hat{\mu}_h)) \label{eq:hist_valid_unknown_const}\\
& \pi^{t}(\mu_h + \hat{\mu}_h) = \lambda_{[-\bar{\tau}, 0]}. \label{eq:hist_valid_unknown_lambda}
\end{align}
\end{subequations}
Constraint \eqref{eq:hist_valid_unknown_mass} ensures that the initial distribution $\mu_0$ is a probability measure. Constraint \eqref{eq:hist_valid_unknown_const} imposes that $\tau(t)$ is constant in time between $t=[-\bar{\tau}, 0]$. Constraint \eqref{eq:hist_valid_unknown_lambda} is a domination term that requires the history $x_h$ to be defined in times $[-\bar{\tau}, 0]$.

It is an open problem to extend consistency constraints and \ac{MV}-solutions towards cases where the delay $\tau(t)$ is time-dependent  (such as $\dot{\tau(t)} \in [-B, B]$).

\section{Conclusion}
% \urg{Unite this with the rest of the paper}
\label{sec:conclusion}

This paper presented a formulation of 
\ac{MV}-solutions for free-terminal-time \acp{DDE} with multiple histories (Definition \ref{defn:mv_solution}). These \ac{MV}-solutions are formed by the conjunction of Validity, Liouville and Consistency constraints.
These \ac{MV}-solutions may be used to provide upper bounds on peak estimation problems over \acp{DDE} by Program \eqref{eq:peak_delay_meas}.

A vital area for future work is determining the conditions under which $P^* = p^*$ between \eqref{eq:peak_delay_traj} and \eqref{eq:peak_delay_meas} (Conjecture \ref{conj:delay}).
Other areas for future work include applying \ac{MV}-solutions to other  problems (such as reachable set and positive-invariant set estimation) and formulating delay-dependent \ac{MV}-solutions.

\section*{Acknowledgements}

The authors thank Corbinian Schlosser, Matteo Tacchi, Didier Henrion, Leonid Mirkin, Alexandre Seuret, and Sabine Mondi\'{e} for their discussions about \acp{DDE} and occupation measures.

\bibliographystyle{IEEEtran}
\bibliography{references.bib}

\appendix

\section{Delay Structures}
\label{app:delay_structure}
This chapter has focused on supremizing $p(x)$ in \eqref{eq:peak_delay_traj} over continuous-time systems with a discrete delay $x(t-\tau)$.
This subsection will discuss peak estimation of $p(x)$ with respect to other types of dynamics and delay structures.

\subsection{Proportional Time-Delays}

A system with a proportional delay is defined with respect to a scaling term $\kappa \in [0, 1)$:
\begin{align}
    \dot{x}(t) = f(t, x(t), x(\kappa t)). \label{eq:dynamics_prop}
\end{align}

Proportional time delays are observed in the current collection of a pantograph on a streetcar \cite{ockendon1971dynamics}. References on functional differential equation with proportional time delay include \cite{carr19762, iserles1991generalized, iserles1994nonlinear, liu1996asymptotic}.
% This framework may be applied to neural networks with proportional delays \cite{zhou2015global}.
MATLAB uses the command \texttt{ddesd} to solve DDEs with time-dependent delays by an RK4 algorithm \cite{shampine2005solving}. 
Other numerical algorithms specifically for proportional delays include \cite{ali2009spectral, sedaghat2012numerical, bahcsi2015numerical}. 

The peak estimation problem over \eqref{eq:dynamics_prop} is,
\begin{subequations}
\label{eq:peak_delay_prop_traj}
    \begin{align}
    P^* = & \sup_{t^* \in [0, T], \; x_0 \in X_0} p(x(t^* \mid x_0)) &\\
    & \dot{x} =  f(t, x(t), x(\kappa t)) & & \forall t \in [0, T] \label{eq:delay_dynamics_prop}.
    \end{align}
\end{subequations}
\Iac{MV}-solution for proportional time delays is
\begin{subequations}
\label{eq:weak_solution_prop}
    \begin{align}
       & \textrm{Initial} & \mu_0 &\in \Mp{X_0} \\
       &\textrm{Peak}  &\mu_p &\in \Mp{[0, T] \times X} \\
       &\textrm{Time-Slack} & \nu &\in \Mp{[0, T] \times X} \label{eq:weak_solution_slack_prop} \\
        &\textrm{Occupation Start }  & \bmu_0 &\in \Mp{[0, \kappa T] \times X^2} \label{eq:weak_solution_start_prop}
         \\        
        &\textrm{Occupation End }  & \bmu_1 &\in \Mp{[\kappa T, T] \times X^2}. \label{eq:weak_solution_end_prop}
    \end{align}
\end{subequations}

Note how the \ac{MV}-solution \eqref{eq:weak_solution_prop} lacks a history measure $\mu_h$ as compared with \eqref{eq:weak_solution}, and also how the limits on \eqref{eq:weak_solution_start_prop}-\eqref{eq:weak_solution_end_prop} are $[0, \kappa T]$ and $[\kappa T, T]$ respectively.

The Lie derivative operator $\Lie$ with $\Lie v= (\partial_t + f(t, x_0, x_1) \cdot \nabla_{x_0}) v(t, x_0)$ is the same as in the discrete-delay case \eqref{eq:peak_delay_liou}, but under dynamics \eqref{eq:dynamics_prop}.

The consistency constraint follows from a modification of Lemma \ref{lem:consistency_free}:
\begin{lem}
\label{lem:consistency_free_prop}
    % Let $x(t)$ be a solution to \eqref{eq:delay_dynamics} (same setting as Lemma \ref{lem:consistency_fixed}) 
    Let $x(\cdot)$ be a solution to \eqref{eq:delay_dynamics_prop} with an initial condition of $x_0 \in X_0$ and a stopping time of $t^* \in [0, T]$. The following pairs of integral are equal for all  $\phi \in C([0, T] \times X)$:
    \begin{align}
    \label{eq:change_limits_free_prop}
        &\int_{0}^{ t^*} \phi(t, x(\kappa t))dt = \frac{1}{\kappa} \int_{0}^{\min(t^*/\kappa, T)} \phi(t'/\kappa, x(t)) dt'.
    \end{align}
\end{lem}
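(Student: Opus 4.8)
The plan is to reproduce, essentially verbatim, the one-line argument behind Lemma \ref{lem:consistency_free}: the identity \eqref{eq:change_limits_free_prop} is just a linear change of variables in the left-hand integral, with the dilation $t \mapsto \kappa t$ now playing the role that the shift $t \mapsto t-\tau$ played in \eqref{eq:change_limits_free}. Concretely, I would substitute $t' = \kappa t$ in $\int_0^{t^*}\phi(t, x(\kappa t))\,dt$. Since $\kappa$ is a fixed constant, $t \mapsto \kappa t$ is an affine map with constant derivative $\kappa$, so $dt = dt'/\kappa$ and the delayed argument collapses to $x(\kappa t) = x(t')$. Continuity of $\phi$ (a hypothesis) together with continuity of the trajectory $x(\cdot)$ (guaranteed by the solution assumptions underlying \eqref{eq:delay_dynamics_prop}) makes both integrands continuous, so the substitution is valid as an ordinary Riemann-integral identity and no measure-theoretic subtlety arises.

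After the substitution, the integrand $\phi(t, x(\kappa t))$ becomes $\phi(t'/\kappa, x(t'))$ and the factor $dt = dt'/\kappa$ supplies the $1/\kappa$ prefactor on the right-hand side of \eqref{eq:change_limits_free_prop}; the remaining work is purely to track endpoints. The lower endpoint $t=0$ maps to $t'=0$, and the upper endpoint $t=t^*$ maps to $t' = \kappa t^*$. Because $\kappa \in (0,1)$ and $t^* \le T$, we have $\kappa t^* \le t^* \le T$, so the contracted trajectory $t \mapsto x(\kappa t)$ samples $x$ only on $[0, \kappa t^*] \subseteq [0, t^*]$ and never reaches back before $t=0$. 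This is precisely the structural reason the proportional-delay \ac{MV}-solution \eqref{eq:weak_solution_prop} carries no history measure $\mu_h$, unlike \eqref{eq:weak_solution}: the dilation keeps the delayed argument inside the forward horizon, so no initial history is accessed.

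I expect the only real obstacle to be the endpoint bookkeeping together with the admissible range of $\kappa$. The $1/\kappa$ weight forces $\kappa \neq 0$, so the statement implicitly restricts to $\kappa \in (0,1)$; the degenerate case $\kappa = 0$ freezes the argument to $x(0)$ and turns \eqref{eq:dynamics_prop} into an \ac{ODE}, which must be excluded. Regarding the upper limit, I would emphasize the contrast with the discrete case: in \eqref{eq:change_limits_free} the additive shift can push $t^*+\tau$ beyond $T$, genuinely requiring a $\min(\,\cdot\,,T)$ clip and a slack term, whereas here the contraction $t \mapsto \kappa t$ can never leave $[0,T]$, so any such clip is inactive and the natural upper limit is simply $\kappa t^*$. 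With the endpoints so resolved, the two sides of \eqref{eq:change_limits_free_prop} agree termwise and the proof is complete.
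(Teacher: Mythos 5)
Your core move---the substitution $t' = \kappa t$ with $dt = dt'/\kappa$---is exactly the paper's one-line proof of Lemma~\ref{lem:consistency_free_prop}. The genuine problem is your endpoint reconciliation. The substitution sends $t = t^*$ to $t' = \kappa t^*$, so what you have actually proved is $\int_0^{t^*}\phi(t, x(\kappa t))\,dt = \tfrac{1}{\kappa}\int_0^{\kappa t^*}\phi(t'/\kappa, x(t'))\,dt'$. You then assert that the stated upper limit $\min(t^*/\kappa, T)$ is an inactive clip equal to $\kappa t^*$. It is not: since $\kappa \in (0,1)$ and $t^* \leq T$, we have both $t^*/\kappa \geq t^* \geq \kappa t^*$ and $T \geq t^* \geq \kappa t^*$, so $\min(t^*/\kappa, T) \geq \kappa t^*$ with strict inequality whenever $t^* > 0$ (take $\kappa = 1/2$, $t^* = T$: the lemma's limit is $\min(2T, T) = T$, while your substitution gives $T/2$). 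You appear to have conflated $\min(t^*/\kappa, T)$ with $\min(\kappa t^*, T)$---a dilation by $1/\kappa$ \emph{expands} the time axis, it does not contract it.

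The deeper miss is your structural conclusion that the proportional case needs no clip or slack. That contradicts the paper's own construction: the \ac{MV}-solution \eqref{eq:weak_solution_prop} retains the slack measure $\nu$, and the consistency constraint \eqref{eq:consistency_prop} carries it, precisely because the delayed samples $x(\kappa t)$ for $t \in [0, t^*]$ cover only $[0, \kappa t^*]$, while the right-hand pairing against $\bmu_0$ covers the trajectory up to $\min(t^*, \kappa T)$; the deficit $[\kappa t^*, \min(t^*, \kappa T)]$ is what $\nu$ fills. The identity that both matches \eqref{eq:consistency_prop} and mirrors Lemma~\ref{lem:consistency_free} is the slack-extended one, $\bigl(\int_0^{t^*} + \int_{t^*}^{\min(t^*/\kappa, T)}\bigr)\phi(t, x(\kappa t))\,dt = \tfrac{1}{\kappa}\int_0^{\min(t^*, \kappa T)}\phi(t'/\kappa, x(t'))\,dt'$, which follows from your substitution because $\kappa \min(t^*/\kappa, T) = \min(t^*, \kappa T)$; as literally printed, the lemma's right-hand limit is a slip for $\min(t^*, \kappa T)$ (equivalently $\kappa t^*$ if the left side stops at $t^*$). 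Your side observations do stand---the contracted argument never reaches before $t=0$, which correctly explains the absence of $\mu_h$ in \eqref{eq:weak_solution_prop}, and $\kappa = 0$ must indeed be excluded for the $1/\kappa$ factor---but as written your proof establishes a different (correct) identity and then bridges to the stated one via a false equality of endpoints.
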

\begin{proof}
   This relation is due to a change of variable with $t' \leftarrow \kappa t$.
\end{proof}

The resultant consistency constraint w.r.t. the measures in \eqref{eq:weak_solution_prop} is
\begin{align}
\label{eq:consistency_prop}
\inp{\phi(t, x_1)}{\bmu_0 + \bmu_1} + \inp{\phi(t, x)}{\nu} &= \inp{\phi(t/\kappa, x_0) / \kappa}{\bmu_0}.
\end{align}

Expressing the linear expansion operator $E_\kappa$ as $E_\kappa \phi(t, x) = \phi(t/\kappa, x_0)\kappa$, the measure \ac{LP} for problem \eqref{eq:peak_delay_prop_traj} is,
\begin{subequations}
\label{eq:peak_delay_prop_meas}
    \begin{align}
    p^* = & \ \sup \quad \inp{p}{\mu_p} \label{eq:peak_delay_prop_obj} \\    
    & \inp{1}{\mu_0} = 1 \label{eq:peak_delay_prop_prob}\\    
    & \mu_p = \delta_0 \otimes\mu_0 + \pi^{t x_0}_\# \Lie_f^\dagger (\bmu_0 + \bmu_1) \label{eq:peak_delay_prop_flow}\\     
    & \pi^{t x_1}_\# (\bmu_0 + \bmu_1) + \nu = E^\kappa_\#(\pi^{t x_0}_\# \bmu_0) \label{eq:peak_delay_prop_cons}\\ 
    & \textrm{Measure Definitions from  \eqref{eq:weak_solution_prop}.} \label{eq:peak_delay_prop_def}
    \end{align}
\end{subequations}

Problem \eqref{eq:peak_delay_prop_meas} upper-bounds \eqref{eq:peak_delay_prop_traj} by following the reasoning from Theorem \ref{thm:delay_upper_bound} for the proportional-delay case.

\begin{rmk}
Proportional and discrete delays can be applied together to form dynamics,
\begin{align}
\label{eq:prop_combined}
    \dot{x}(t) = f(t, x(t), x(\kappa t - \tau).
\end{align}
Causalness of \eqref{eq:prop_combined} requires that $\kappa \in [0, 1)$ and $\tau \geq 0$. A consistency constraint may be posed using an integral relation
\begin{align}
    \int_{0}^T \phi(t, x(\kappa t - \tau)) dt = \int_{-\tau}^{\kappa T - \tau} \phi((t+\tau)/\kappa, x(t))/\kappa dt,
\end{align}
used as a step towards forming Lemmas \ref{lem:consistency_free} and \ref{lem:consistency_free_prop}.
\end{rmk}

\subsection{Discrete-Time Systems}

This subsection will concentrate on a discrete-time system with a long time delay. The discrete-time system $x[t]$ is defined w.r.t. a delay $\tau \in \N$, and a time horizon $T \in \N$ under the assumption that $\tau < T$.

The peak estimation program for a system with discrete-time dynamics and one time delay $\tau$ is:
\begin{subequations}
\label{eq:peak_delay_disc_traj}
    \begin{align}
    P^* = & \sup_{t^* \in 0..T, \; x_h[\cdot]} p(x[t \mid x_h]) &\\
    & \dot{x} =  f(t, x[t], x[t-\tau]) & & \forall t \in 1..T \label{eq:delay_dynamics_disc} \\
    & x[t] = x_h[t] & & \forall t \in [-\tau, 0]\\
     & x_h[\cdot] \in \mathcal{H}.
    \end{align}
\end{subequations}

Delayed dynamics \eqref{eq:delay_dynamics_disc} may be implemented as a non-delayed discrete system by state inflation in terms of $x[t-(0..\tau)]$ \cite{fridman2014intro}. Such state augmentation could lead to a large number of variables in systems analysis and result in intractably large computational problems.

This subsection will define \iac{MV}-solution using the variables from \eqref{eq:weak_solution}, in which the measures with the maximum number of variables are $(\bmu_0(t, x_0, x_1), \bmu_1(t, x_0, x_1))$.

\subsubsection{History-Validity}

The history-validity constraint for discrete-time systems will separate the history $x_h[t]$ into a time-zero component ($\mu_0$) and a history component $t \in -\tau..-1$ ($\mu_h$). The time-zero component is $\mu_0 \in X_0$, as in Section \ref{sec:history_validity}.

The history measure $\mu_h$ should represent a history $x_h[t]$ defined between $t \in -\tau..-1$. This may be imposed by setting the $t$-marginal of $\mu_h$ to a train of Dirac-deltas supported at sample times $-\tau..-1$,
\begin{align}
    \pi^{t}_\# \mu_h &= \textstyle \sum_{t=-\tau}^1 \delta_{t}.
\end{align}

\subsubsection{Liouville}

The discrete-time Liouville equation \cite{magron2017discrete} applied to the dynamics \eqref{eq:delay_dynamics_disc} for all test functions $v \in C([0, T+1] \times X)$ is,
\begin{align}
    \inp{v(t, x)}{\mu_p} &= \inp{v(0, x)}{\mu_0} + \inp{v(t+1, f(t, x_0, x_1) - v(t, x_0, x_1)}{\bmu_0 + \bmu_1}. \label{eq:delay_disc_push_int}
    \intertext{The Liouville constraint in \eqref{eq:delay_disc_push_int} will be abbreviated (using the identity operator $Id(x)=x$),}
    \mu_p &= \delta_0 \otimes \mu_0 + \pi^{t x_0}_\# ((t+1, f)_\# - Id_\#)(\bmu_0 + \bmu_1).\label{eq:delay_disc_push}
\end{align}

% The Liouville equation 
\subsubsection{Consistency}

The consistency constraint for dynamics \eqref{eq:delay_dynamics_disc} may be derived from the following Lemma,
\begin{lem}
\label{lem:consistency_free_disc}
    % Let $x(t)$ be a solution to \eqref{eq:delay_dynamics} (same setting as Lemma \ref{lem:consistency_fixed}) 
    Let $x[\cdot]$ be a trajectory of \eqref{eq:delay_dynamics_disc} given an initial history $x_h$ and a stopping time of $t^* \in 0..T$. It follows that the below pair of summations are equal for all  $\phi \in C([0, T] \times X)$:
    \begin{align}
    \label{eq:change_limits_free_discrete}
        &\left(\sum_{t=0}^{t^*} + \sum_{t=t^*}^{\min (T, t^* + \tau)} \right) \phi(t, x[t-\tau])dt = \sum_{t'=-\tau}^{\min(T-\tau, t^*)} \phi(t'+\tau, x[t]).
    \end{align}
\end{lem}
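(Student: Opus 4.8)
The plan is to mirror the proof of the continuous-time Lemma~\ref{lem:consistency_free}: the identity is a pure reindexing of the summation variable under the discrete shift $t' \leftarrow t - \tau$ (equivalently $t = t' + \tau$), which is an exact bijection here precisely because $\tau \in \N$, so the shift carries integer time-steps to integer time-steps. First I would combine the two summations on the left-hand side into a single sum over the index range $t = 0, 1, \ldots, \min(T, t^* + \tau)$, so that the left-hand side reads $\sum_{t=0}^{\min(T, t^*+\tau)} \phi(t, x[t-\tau])$. Here the first sum $\sum_{t=0}^{t^*}$ contributes the genuine trajectory steps, while the second sum $\sum_{t=t^*}^{\min(T, t^*+\tau)}$ contributes the trailing ``slack'' steps that are later absorbed by the slack measure $\nu$ in the consistency constraint.

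Next I would apply the substitution $t = t' + \tau$. Under this map the summand transforms exactly as $\phi(t, x[t-\tau]) = \phi(t'+\tau, x[t'])$, and the endpoints transform as $t = 0 \mapsto t' = -\tau$ and $t = \min(T, t^*+\tau) \mapsto t' = \min(T, t^*+\tau) - \tau = \min(T-\tau, t^*)$, where the last equality is a direct case-split on whether $T \leq t^* + \tau$ or $T > t^* + \tau$. Since shifting a block of consecutive integers by the fixed integer $-\tau$ is a bijection onto another block of consecutive integers, this yields $\sum_{t'=-\tau}^{\min(T-\tau, t^*)} \phi(t'+\tau, x[t'])$, which is exactly the right-hand side of \eqref{eq:change_limits_free_discrete}. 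No dynamics information is used beyond the fact that $x[\cdot]$ is a well-defined sequence on the relevant index range; the identity is purely combinatorial.

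The step requiring the most care is the bookkeeping at the shared index $t = t^*$. Unlike the continuous case, where the common endpoint of $[0,t^*]$ and $[t^*, \min(T, t^*+\tau)]$ has Lebesgue measure zero and is therefore irrelevant, in the discrete setting the two sums as written both include the term at $t = t^*$, so a literal addition double-counts it. I would resolve this by adopting the convention that the second sum begins at $t^* + 1$ (so that the two index sets tile $\{0, \ldots, \min(T, t^*+\tau)\}$ without overlap), or equivalently by subtracting the duplicated boundary term before reindexing. Once the index sets are made to partition $\{0, \ldots, \min(T, t^*+\tau)\}$, the remaining argument is the verbatim discrete analog of the change-of-variable used in Lemma~\ref{lem:consistency_free}, and the equality follows.
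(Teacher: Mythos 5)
Your proof takes essentially the same route as the paper's, whose entire argument is the one-line statement that the index of summation is exchanged as $t' \rightarrow t - \tau$; your reindexing with endpoint bookkeeping ($t = \min(T, t^*+\tau) \mapsto t' = \min(T-\tau, t^*)$) is exactly that argument made explicit. Your added observation about the shared index $t = t^*$ — that the two left-hand sums as literally written would double-count it and must be taken to tile $\{0, \ldots, \min(T, t^*+\tau)\}$, a subtlety with no continuous-time analog since there the common endpoint has Lebesgue measure zero — is a correct refinement of a point the paper's proof leaves implicit.
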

\begin{proof}
   The index of summation is exchanged as $t' \rightarrow t-\tau$.
\end{proof}

The resultant consistency constraint from Lemma \ref{lem:consistency_free_disc} has an identical form as  \eqref{eq:consistency_free} with
\begin{align}
    \pi^{t x_1}_\#(\bmu_0 + \bmu_1) + \nu = S^\tau_\#(\mu_h + \pi^{t x_0} \bmu_0).
\end{align}

\subsubsection{Measure Program}

The peak estimation measure \ac{LP} that upper-bounds \eqref{eq:peak_delay_disc_traj} is,
\begin{subequations}
\label{eq:peak_delay_disc_meas}
    \begin{align}
        p^* = & \ \sup \quad \inp{p}{\mu_p} \label{eq:peak_delay_disc_obj} \\
    & \inp{1}{\mu_0} = 1 \label{eq:peak_delay_disc_prob}\\    
    & \pi^{t}_\# \mu_h = \textstyle \sum_{t'=-\tau}^{-1} \delta_{t=t'} \label{eq:peak_delay_disc_hist}\\   
    & \mu_p = \delta_0 \otimes \mu_0 + \pi^{t x_0}_\# ((t+1, f, x_1)_\# - Id_\#)(\bmu_0 + \bmu_1)\label{eq:peak_delay_disc_flow}\\        
    & \pi^{t x_1}_\# (\bmu_0 + \bmu_1) + \nu = S^\tau_\#(\mu_h + \pi^{t x_0}_\# \bmu_0) \label{eq:peak_delay_disc_cons}\\ 
    & \textrm{Measure Definitions from  \eqref{eq:weak_solution}.} \label{eq:peak_delay_disc_def}
    \end{align}
\end{subequations}

This upper-bound also follows from constructing measures from trajectories as in Theorem 
\ref{thm:delay_upper_bound}.
\section{Proof of Strong Duality}
\label{app:strong_duality_delay}

This appendix will prove strong duality between programs \eqref{eq:peak_delay_meas} and \eqref{eq:peak_delay_cont} for peak estimation. The general pattern for cone conventions and affine maps of \cite{tacchi2021thesis} will be followed.

% \urg{Use Theorem 2.6 of \cite{tacchi2021thesis}, as is standard practice.}

The signed measure spaces of \eqref{eq:weak_solution} are
\begin{align}
     \mathcal{X} = &\mathcal{M}(H_0) \times \mathcal{M}(X_0) \times \mathcal{M}([0, T]\times X)^2 \nonumber \\
     &\times \mathcal{M}([0, T-\tau] \times X^2) \times \mathcal{M}([T-\tau, T] \times X^2) \label{eq:dual_spaces_delay} \\
     \mathcal{X}' = &C(H_0) \times C(X_0) \times C([0, T]\times X)^2 \nonumber \\
     &\times C([0, T-\tau] \times X^2) \times C([T-\tau, T] \times X^2). \nonumber
\end{align}

Their nonnegative subcones (with \eqref{eq:weak_solution} membership) are topological duals under A1 and have definitions
\begin{align}
     \mathcal{X}_+ = &\mathcal{M}_+(H_0) \times \mathcal{M}_+(X_0) \times \mathcal{M}_+([0, T]\times X)^2 \nonumber  \\
     & \times \mathcal{M}_+([0, T-\tau] \times X^2) \times \mathcal{M}_+([T-\tau, T] \times X^2) \label{eq:dual_cones_delay} \\
     \mathcal{X}'_+= &C_+(H_0) \times C_+(X_0) \times C_+([0, T]\times X)^2  \nonumber \\
    &\times C_+([0, T-\tau] \times X^2) 
      \times C_+([T-\tau, T] \times X^2). \nonumber
\end{align}

The collection of measures in \eqref{eq:weak_solution} will be denoted as $\boldsymbol{\mu} = (\mu_h, \mu_0, \mu_p, \nu, \bmu_0, \bmu_1)$ and is a member of $\mathcal{X}_+$. 

The constraint spaces of \eqref{eq:peak_delay_prob}-\eqref{eq:peak_delay_cons} are
\begin{align}
    \mathcal{Y} &= \R \times C([-\tau, 0]) \times  C^1([0, T] \times X) \times C([0, T] \times X) \\
    \mathcal{Y}' &= 0 \times \mathcal{M}([-\tau, 0])  \times C^1([0, T] \times X)' \times  \mathcal{M}([0, T] \times X).
\end{align}
The space $\mathcal{X}$ has the weak-* topology and 
$\mathcal{Y}$ has a sup-norm bounded weak topology.
Because there are no affine-inequality constraints present in \eqref{eq:peak_delay_prob}-\eqref{eq:peak_delay_cons}, we write $\mathcal{Y}_+ = \mathcal{Y}$ and $\mathcal{Y}_+' = \mathcal{Y}'$ to match the notation used in \cite{tacchi2021thesis}. 

The variables of \eqref{eq:peak_delay_cont} with $\boldsymbol{\ell} = (\gamma, \xi,  v, \phi)$ satisfy $\boldsymbol{\ell} \in \mathcal{Y}_+'$.

A pair of adjoint linear operators $\A: \mathcal{X}_+ \rightarrow \mathcal{Y}_+$ and $\A': \mathcal{Y}_+' \rightarrow \mathcal{X}_+'$  induced from \eqref{eq:peak_delay_prob}-\eqref{eq:peak_delay_cons} are,
\begin{align}
    \A(\boldsymbol{\mu}) &=\begin{bmatrix}\inp{1}{\mu_0}\\ \pi^t_\# \mu_h \\ \mu_p- \delta_0 \otimes\mu_0 -\Lie_f^\dagger \mu  \\ S^\tau_\#(\mu_h + \pi^{t x_0}_\# \bmu_0)- \pi^{t x_1}_\# (\bmu_0 + \bmu_1) - \nu  \end{bmatrix}\\ 
    \A'(\boldsymbol{\ell}) &=\begin{bmatrix}\xi(t) + \phi(t+\tau, x)\\ \gamma - v(0, x) \\ v(t, x) \\ -\phi(t, x) \\ -\Lie_f v(t, x_0) - \phi(t, x_1) + \phi(t+\tau, x_0) \\ -\Lie_f v(t, x_0) - \phi(t, x_1) \end{bmatrix}.\nonumber
\end{align}
The cost and answer vectors are
\begin{align}
    \mathbf{c} &= [0, 0, p, 0, 0, 0] \\
    \mathbf{b} &= [1, \lambda_{[-\tau, 0]}, 0, 0].
\end{align}
Problem \ref{eq:peak_delay_meas} may be expressed as the standard-form \ac{LP}:
\begin{align}
    p^* =& \sup_{\boldsymbol{\mu} \in \mathcal{X}_+}\inp{\mathbf{c}}{\boldsymbol{\mu}} = \inp{p}{\mu_p}, & & \mathbf{b} - \A(\boldsymbol{\mu}) \in \mathcal{Y}_+. \label{eq:peak_delay_meas_std}\\
\intertext{The standard-form dualization of \eqref{eq:peak_delay_meas_std}  is }
    d^* = &\inf_{\boldsymbol{\ell} \in \mathcal{Y}'_+} \inp{\boldsymbol{\ell}}{\mathbf{b}} = \gamma + \textstyle \int_{t=-\tau}^0 \xi(t) dt,
    & &\A'(\boldsymbol{\ell}) - \mathbf{c} \in \mathcal{X}_+. \label{eq:peak_delay_cont_std}
\end{align}
The standard-form \eqref{eq:peak_delay_cont_std} may be expanded into \eqref{eq:peak_delay_cont}.

Given that all sets are compact (A1), measures in $\boldsymbol{\mu}$ are bounded (Lemma \ref{lem:moment_bound}), functions in $(c, \mathcal{A})$ are continuous (A2, A4, $v\in C^1([0, T] \times X) \implies \Lie_f v \in C([0, T] \times X)$ ), and there exists a feasible measure solution (Theorem \ref{thm:delay_upper_bound}); it holds that strong duality between \eqref{eq:peak_delay_meas} and \eqref{eq:peak_delay_cont} is achieved by Theorem 2.6 of \cite{tacchi2021thesis}.

% \old{
% The sufficient conditions for strong duality and attainment of optimality between \eqref{eq:dist_meas_std} and  \eqref{eq:dist_cont_std} as outlined in Theorem 2.6 of \cite{tacchi2021thesis} are that:
% \begin{enumerate}
%     \item[R1] All support sets are compact (A1)
%     \item[R2] All measure solutions have bounded mass (Lemma \ref{lem:bounded})
%     \item[R3] All functions involved in the definitions of  $c$ and $\A$ are continuous (A2, A3)
%     \item[R4] There exists a $\rev{\boldsymbol{\mu}}_{\textrm{feas}} \in \rev{\mathcal{X}}_+$ with $\rev{\mathbf{b}} - \A(\rev{\boldsymbol{\mu}}_\textrm{feas}) \in \rev{\mathcal{Y}}_+$ 
% \end{enumerate}

% The requirements R1 and R2 hold by Assumption A1 and Lemma \ref{lem:bounded} respectively. R3 is valid given that $c(x, y)$ is $C^0$ (A3), the projection map $\pi^{x}$ is continuous, and the mapping $(t,x) \mapsto \Lie_f v(t,x)$ is $C^0$ for $v\in C^1$ and $f$ Lipschitz (continuous) (A2). A feasible measure $\rev{\boldsymbol{\mu}}_\textrm{feas}$ may be constructed from the process in Theorem \ref{thm:meas_lower} from a tuple $\mathcal{T}$, therefore satisfying R4.

% Strong duality between \eqref{eq:dist_meas} and \eqref{eq:dist_cont} is therefore proven after satisfaction of all four requirements.

% Therefore, strong duality holds between \eqref{eq:dist_meas} and \eqref{eq:dist_cont}.

% }
\section{Subvalue Functionals and DDE Control}
\label{app:ocp}

This appendix analyzes \iac{DDE} \ac{OCP} when posed over a given history $x_h(\cdot)$. The function $J(t, x_0, x_1, u)$ is a running cost evaluated on the trajectory starting from $x_h$, and $J_T(x)$ is a terminal cost at time $T$. The final point $x(T)$ must reside in the terminal set $X_T \subseteq X$. The controller $u(\cdot)$ must reside inside the compact set $U \subset \R^m$ at each time. The  \ac{DDE} \ac{OCP} under these constraints is
\begin{subequations}
    \label{eq:u_traj}
    \begin{align}
    P^* = & \inf_{u(t)}  \int_{t=0}^T{J(t, x(t), x(t-\tau), u(t)) dt} + J_T(x(T))\\
    & \dot{x} =  f(t, x(t), x(t -\tau), u(t)) & & \forall t \in [0, T] \label{eq:dynamics_delay_u}\\
    & u(t) \in U& & \forall  t \in [0, T]  \\
    & x(t) = x_h(t) & & \forall t \in [-\tau, 0] \\
    & x(T) \in X_T.
    \end{align}
\end{subequations}

Problem \ref{eq:u_traj} was addressed in works such as \cite{warga1974optimal, rosenblueth1991relaxation, rosenblueth1992proper, rosenblueth1992strongly, plaksin2020minimax}, and was completely solved in the case of linear \ac{DDE} dynamics and a quadratic objective in \cite{santos2009linear, ortega2021comments}.

A pair of infinite-dimensional \acp{LP} are synthesized to bound the \ac{OCP} in \eqref{eq:u_traj}. 

This appendix assumes that the terminal time $T$ is fixed to simplify analysis.  
The \ac{MV}-solution from Section \ref{sec:peak_lp} involves a free terminal time, multiple histories, and zero running cost $(J=0)$.

\subsection{Control Measure Program}

The deterministic control law $u(t, x_0, x_1)$ at each time $t$ is relaxed into a probability distribution $\xi_u(u \mid t, x_0, x_1)$  \cite{young1942generalized}. 

The measures involved in \iac{MV}-solution of \eqref{eq:u_traj} are
\begin{subequations}
\label{eq:weak_solution_u}
    \begin{align}
       & \textrm{Initial} & \mu_0 &\in \Mp{X_0} \\
       &\textrm{Peak}  &\mu_p &\in \Mp{[0, T] \times X} \\
        &\textrm{Occupation Start }  & \bmu_0 &\in \Mp{[0, \kappa T] \times X^2 \times U} \label{eq:weak_solution_start_u}
         \\        
        &\textrm{Occupation End }  & \bmu_1 &\in \Mp{[\kappa T, T] \times X^2 \times U}. \label{eq:weak_solution_end_u}
    \end{align}
\end{subequations}

The time-slack measure is set to $\nu=0$ because of the fixed-terminal-time setting ($t^*=T$). The symbol $\mu_{x_h(\cdot)}$ is the occupation measure of $t \mapsto (t, x_h(t))$ between $t \in [-\tau, 0]$.
The history measure $\mu_h$ from \eqref{eq:weak_solution} is set equal to $\mu_{x_h(\cdot)}$ in the case of a single history. Similarly, the initial measure $\mu_0$ is set to the Dirac delta $\delta_{x=x_h(0^+)}$.

A measure relaxation to the optimal program in \eqref{eq:u_traj} is
\begin{subequations}
\label{eq:u_meas}
\begin{align}
p^* =  & \inf \quad \inp{J}{\bar{\mu}} + \inp{J_T}{\mu_T} \\
    & \delta_T \otimes \mu_T = \delta_{t=0, x=x_h(0^+)} + \pi^{t x_0}_\# \Lie_f^\dagger (\bmu_0 + \bmu_1)\\
    & \pi^{t x_1}_\# (\bmu_0 + \bmu_1) = S^\tau_\#( \mu_{x_h(\cdot)} + \pi^{t x_0}_\# \bmu_0)\\
    & \text{Measures from \eqref{eq:weak_solution_u}.}
\end{align}
\end{subequations}

This measure relaxation is based on the optimal control framework of \cite{lewis1980relaxation, henrion2008nonlinear}. Young measure formulations for \ac{DDE} \ac{OCP} have been developed in \cite{warga1974optimal, rosenblueth1991relaxation, rosenblueth1992proper, rosenblueth1992strongly, warga2014optimal}, but Liouville equations began use only in \cite{barati2012optimal}.

\subsection{Control Function Program}

Dual variables $v \in C^1([0, T] \times X)$ and $\phi \in C([0, T] \times X)$ may be introduced to form the dual program of \eqref{eq:u_meas}:
\begin{subequations}
\label{eq:u_cont}
\begin{align}
    d^* = & \ \sup \quad v(0, x_h(0)) +\textstyle \textstyle\int_{-\tau}^{0} \phi(t + \tau, x_h(t)) dt\label{eq:u_cont_obj}& \\
    &J_T(x) - v(T, x) \geq 0 \label{eq:u_cont_term} & & \forall x \in X_T \\
    & \Lie_f v + J(t, x_0, u) -  \phi(t, x_1)  +  \phi(t+\tau, x_0) \geq 0 \label{eq:u_cont_f} & & \forall (t, x_0, x_1 ,u) \in [0, T-\tau] \times X^2 \times U\\
    & \Lie_f v + J(t, x_0, u) -  \phi(t, x_1) \geq 0 \label{eq:u_cont_f_end} & &  \forall (t, x_0, x_1 ,u) \in [T-\tau, T] \times X^2 \times U\\
    &v \in C^1([0, T] \times X) & \label{eq:u_cont_v} \\
    &\phi \in C([0, T] \times X)  \label{eq:u_cont_phi}.
\end{align}
\end{subequations}

This dual program is obtained (with strong duality) by following nearly identical steps to Appendix \ref{app:strong_duality_delay}.

\subsection{True Value Functional}
\label{sec:value_functional}

Let $\mathcal{U}$ be the admissible class of control inputs (such as $\mathcal{U} = \{u : [0, T] \rightarrow U\}$).
Given a time $t \in [0, T]$, a current state $z \in X$, and a history $w(\cdot) \in PC([-\tau, 0], X)$, the value functional $V^*$ associated with the \ac{OCP} \eqref{eq:u_traj} is

\begin{align}
    V^*(t, z, w(\cdot)) = & \min_{u \in \mathcal{U}} \int_{t'=t}^T{J(t', x(t' \mid t, w, u), u(t')) dt'} + J_T(x(T \mid t,  w, u)) \nonumber\\
    & \dot{x} =  f(t, x(t), x(t - \tau), u(t)) & & \forall t \in [0, T] \nonumber\\
    & x(t') = w(t') & & \forall t \in [t-\tau, t) \nonumber\\
    & x(t) = z \\
        & x(T) \in X_T \nonumber \\
    & u(t) \in U& & \forall  t \in [0, T]. \nonumber
\end{align}

The value functional $V^*$ is the cost of solving Problem \eqref{eq:u_traj} starting at time $t$ and state $z$ with history $w(\cdot)$. The convention of arguments $t, z, w(\cdot)$ was taken from \cite{plaksin2020minimax}.

The \ac{HJB} equation of optimality is \cite{liberzon2011calculus}
\begin{subequations}
\label{eq:hjb}
\begin{align}
    0 &= J_T(x(T)) - V^*(T, x(T)) \label{eq:hjb_term_true}\\
    0 &= \inf_{u \in U} \left(\dot{V}^*(t, x(t), x_{\tau}(\cdot), u) + J(t, x(t), u(t)) \right) & \forall t \in [0, T].  \label{eq:hjb_traj_true}
\end{align}
\end{subequations}

The Cauchy problem of (7, 8) in \cite{plaksin2020minimax} has the form of \eqref{eq:hjb}.

\subsection{Subvalue Functional}

The solution of program \eqref{eq:u_cont} can create a lower-bound on the value functional $V^*$.

\subsubsection{Properties of Subvalue Functionals}

\begin{defn}
\label{def:subvalue}
A \textit{subvalue functional} is a functional $\mathcal{V}(t, x, w)$ such that
\begin{align}
   \mathcal{V}(t, x, w) &\leq V^*(t, x, w)  & & \forall t \in [0, T], x \in X, \ w \in PC([-\tau, 0], X).
\end{align}
\end{defn}

\begin{thm}
    Any functional $\mathcal{V}(t, x, x_\tau)$ with derivative $\dot{\mathcal{V}}(t, x, x_\tau, u)$ that satisfies the following two properties is a subvalue functional for $V^*$:
    \begin{subequations}        
    \begin{align}
        &J_T(x) - \mathcal{V}(T, x, w) \geq 0 & & \forall x \in X_T, \ w \in PC([-\tau, 0], X) \label{eq:hjb_term}\\
        &J(t, x, u) + \dot{\mathcal{V}}(t, x, w, u) \geq 0, & & \forall t \in T, \ x \in X, \ w \in PC([-\tau, 0], X), \ u \in U. \label{eq:hjb_lie}
    \end{align}
    \end{subequations}
\end{thm}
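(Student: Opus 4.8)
The plan is to run a standard verification (comparison) argument for Hamilton--Jacobi lower bounds, adapted to the functional (history-dependent) setting. Fix a time $t \in [0, T]$, a state $z \in X$, and a history $w(\cdot) \in PC([-\tau, 0], X)$. If no admissible control steers the resulting trajectory into $X_T$ at time $T$, then $V^*(t, z, w) = +\infty$ and the desired inequality $\mathcal{V} \leq V^*$ holds trivially; so I would assume such controls exist and fix an arbitrary admissible $u(\cdot) \in \mathcal{U}$ whose trajectory $x(\cdot \mid t, z, w, u)$ of \eqref{eq:dynamics_delay_u} satisfies $x(T) \in X_T$. For each $t' \in [t, T]$ let $x_{t'}(\cdot) \in PC([-\tau, 0], X)$ denote the history segment $s \mapsto x(t' + s)$ of this trajectory, so that $x_{t}(\cdot) = w(\cdot)$.

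First I would evaluate $\mathcal{V}$ along the trajectory by setting $g(t') := \mathcal{V}(t', x(t'), x_{t'}(\cdot))$. By the definition of $\dot{\mathcal{V}}$ as the rate of change of $\mathcal{V}$ along trajectories, $g'(t') = \dot{\mathcal{V}}(t', x(t'), x_{t'}(\cdot), u(t'))$, and integrating over $[t, T]$ gives
\begin{equation*}
    \mathcal{V}(T, x(T), x_T(\cdot)) - \mathcal{V}(t, z, w) = \int_t^T \dot{\mathcal{V}}(t', x(t'), x_{t'}(\cdot), u(t'))\, dt'.
\end{equation*}
Applying the Lie-type inequality \eqref{eq:hjb_lie} pointwise (with history $w = x_{t'}(\cdot)$ and input $u = u(t')$) bounds the integrand below by $-J(t', x(t'), u(t'))$, while the terminal inequality \eqref{eq:hjb_term} evaluated at $t'=T$ (legitimate because $x(T) \in X_T$) gives $\mathcal{V}(T, x(T), x_T(\cdot)) \leq J_T(x(T))$. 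Combining these two facts yields
\begin{equation*}
    \mathcal{V}(t, z, w) \leq J_T(x(T)) + \int_t^T J(t', x(t'), u(t'))\, dt',
\end{equation*}
whose right-hand side is exactly the \ac{OCP} cost incurred by $u(\cdot)$ starting from $(t, z, w)$.

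Finally, since $u(\cdot)$ was an arbitrary admissible control reaching $X_T$, I would take the infimum over all such controls. The right-hand side infimizes to $V^*(t, z, w)$ by the definition in Section \ref{sec:value_functional}, so $\mathcal{V}(t, z, w) \leq V^*(t, z, w)$; as $(t, z, w)$ were arbitrary, this is precisely Definition \ref{def:subvalue} and establishes that $\mathcal{V}$ is a subvalue functional.

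The main obstacle is rigorously justifying the chain-rule / fundamental-theorem-of-calculus step $g'(t') = \dot{\mathcal{V}}(t', x(t'), x_{t'}(\cdot), u(t'))$: because $\mathcal{V}$ depends on the infinite-dimensional history segment $x_{t'}(\cdot)$, the ordinary finite-dimensional chain rule does not apply, and $\dot{\mathcal{V}}$ must be read as a coinvariant (directional) derivative in the sense of the functional Hamilton--Jacobi theory of \cite{plaksin2020minimax}. One must verify enough regularity of $\mathcal{V}$ (e.g. that $t' \mapsto g(t')$ is absolutely continuous with a.e. derivative equal to $\dot{\mathcal{V}}$, and that the finitely many discontinuities of the \ac{PC} history do not contribute jumps to $g$) for the integration to be valid. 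Away from this regularity question, the remainder is a direct comparison computation.
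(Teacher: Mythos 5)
Your proof is correct and takes essentially the same route as the paper's: integrate the Lie-type inequality \eqref{eq:hjb_lie} along an arbitrary admissible trajectory, apply the terminal bound \eqref{eq:hjb_term} at $x(T) \in X_T$, and optimize over controls. Your two refinements---taking an infimum over admissible controls (covering the case where no minimizer exists, and the vacuous case $V^* = +\infty$ when no control reaches $X_T$) and explicitly flagging the coinvariant-derivative regularity needed to justify $g'(t') = \dot{\mathcal{V}}$, which the paper leaves implicit---strengthen rather than change the argument.
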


\begin{proof}
This result follows by following the steps of Proposition 1's proof from \cite{jones2021polynomial}.

Let $\tilde{u} \in \mathcal{U}$ be an arbitrary control policy starting at the initial condition $(t_0, z, w)$, resulting in a trajectory $\tilde{x}(t)$. Denote $\tilde{x}_\tau(\cdot)$ as the history function $\tilde{x}_t(t') = \tilde{x}(t + t') \ \forall t' \in [-\tau, 0]$.

Relation \eqref{eq:hjb_lie} ensures that for all $t \in [t_0, T]$,
\begin{equation}
    \dot{\mathcal{V}}(t, \tilde{x}(t), \tilde{x}_t(\cdot), \tilde{u}(t)) + J(t, \tilde{x}(t), \tilde{u}(t)) \geq 0.
\end{equation}
Integrating the above term with respect to $t$ yields
\begin{subequations}
\begin{align}
    0 &\leq \int_{t=t_0}^T \dot{\mathcal{V}}(t, \tilde{x}(t), \tilde{x}_t(\cdot), \tilde{u}(t)) + J(t, \tilde{x}(t), \tilde{u}(t))dt \\
    0&\leq \underbracket{\mathcal{V}(T, \tilde{x}(T), \tilde{x}_T(\cdot))}_{V(T)}- \underbracket{\mathcal{V}(t_0, \tilde{x}(t_0), \tilde{x}_{t_0}(\cdot))}_{\mathcal{V}(t_0)} +  \int_{t=t_0}^T
    J(t, \tilde{x}(t), \tilde{u}(t))dt \\
    \mathcal{V}(t_0) &\leq \mathcal{V}(T) + \int_{t=t_0}^T
    J(t, \tilde{x}(t), \tilde{u}(t))dt \label{eq:subvalue_vt1}\\
    \mathcal{V}(t_0) &\leq J_T(\tilde{x}(T)) + \int_{t=t_0}^T
    J(t, \tilde{x}(t), \tilde{u}(t))dt. \label{eq:subvalue_jt1}
\end{align}
\end{subequations}

The transformation of \eqref{eq:subvalue_vt1} to  \eqref{eq:subvalue_jt1} follows from relations \eqref{eq:value_term} and \eqref{eq:u_cont_term} $(\mathcal{V}(T) \leq J(\tilde{x}(T))$. When $\tilde{u}$ is a minimizing control  $u^*$ (if it exists), then the right-hand side of \eqref{eq:subvalue_jt1} is the optimal value functional $V^*(t_0, z, w(\cdot))$ and the left-hand side is $\mathcal{V}(t_0, z, w(\cdot))$. The proof that $\mathcal{V}$ is a lower bound on $V^*$ is therefore complete, given that $\mathcal{V}(t_0, z, w(\cdot)) \leq V^*(t_0, z, w(\cdot))$ will hold for all choices of $(t_0, z, w(\cdot))$.
\end{proof}

% \subsection{Recovered Subvalue Functional}
% A value functional may be formed in accordance with the relaxed optimal control program \eqref{eq:u_cont},

\subsubsection{Recovery of a Subvalue Functional}
The dual solution $(v, \phi)$ from \eqref{eq:u_cont} may be assembled into a functional,
\begin{equation}
\label{eq:subvalue}
    V(t, x, z(\cdot)) = v(t, x) + \int_{t}^{\min(t+\tau, T)} \phi(s, z(s-\tau)) ds.
\end{equation}

The bias term $\phi$ in \eqref{eq:u_cont_phi} is defined and is $C^0$ continuous only between times $t \in [0, T]$. If the hard integration limit at $T$ was not present, then $\phi$ would be queried at undefined values $t \in (T, T + \tau]$. 
The terminal value of the value functional is
\begin{equation}
    V(T) = V(t, x(T), x([T-\tau, T])) = v(T, x(T)) + \int_{T}^{T} \phi_i(s, x(s-\tau))ds = v(T, x(T)). \\ \label{eq:value_term}
\end{equation}

The objective in \eqref{eq:u_cont_obj} is the evaluation of the value functional at time $t=0$ along the optimal controlled trajectory $x^*(t)$:
\begin{equation}
    V(0) = V(0, x_h(0), x_h) = v(0, x_h(0)) + \int_{0}^{\tau} \phi_i(s, x_h(s-\tau))ds. \label{eq:value_init}
\end{equation}

The time-derivative (co-invariant derivative) of the value functional $\dot{V}$ is
\begin{align}
\label{eq:v_dot}
    \dot{V}(t, z, w(\cdot), u) &= \Lie_f v(t, x(t)) + I_{[0, T-\tau]}(t) \phi(t+\tau, z) -\phi(t, w(- \tau)).
\end{align}

\begin{thm}
    The functional \eqref{eq:subvalue} is a subvalue functional in the sense of \eqref{def:subvalue}.
\end{thm}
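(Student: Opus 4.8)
The plan is to invoke the preceding theorem, which certifies that any functional satisfying the two structural properties \eqref{eq:hjb_term} and \eqref{eq:hjb_lie} is a subvalue functional for $V^*$, and then to verify those two properties for the specific functional \eqref{eq:subvalue} using the dual feasibility constraints of program \eqref{eq:u_cont} together with the already-derived expressions \eqref{eq:value_term} and \eqref{eq:v_dot}. No fresh analysis is needed; the work is entirely a matching of terms.

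First I would check the terminal inequality \eqref{eq:hjb_term}. By \eqref{eq:value_term} the integral term collapses at $t = T$, so $V(T, x, w) = v(T, x)$ independently of the history argument $w$. The terminal dual constraint \eqref{eq:u_cont_term} then gives $J_T(x) - V(T, x, w) = J_T(x) - v(T, x) \ge 0$ for every $x \in X_T$, which is exactly \eqref{eq:hjb_term}.

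Next I would check the derivative inequality \eqref{eq:hjb_lie}. Writing $z$ for the current state $x_0$ and $w(-\tau)$ for the delayed state $x_1$, the co-invariant derivative \eqref{eq:v_dot} splits into two cases according to the indicator $I_{[0,T-\tau]}(t)$. For $t \in [0, T-\tau]$ the indicator equals $1$, so $J(t, z, u) + \dot V = \Lie_f v(t, z) + J(t, z, u) + \phi(t+\tau, z) - \phi(t, w(-\tau))$, which is precisely the left-hand side of \eqref{eq:u_cont_f} under the identification $x_0 = z$, $x_1 = w(-\tau)$, and is therefore nonnegative. For $t \in [T-\tau, T]$ the indicator equals $0$, dropping the $\phi(t+\tau, z)$ term, and the resulting expression matches the left-hand side of \eqref{eq:u_cont_f_end}, again nonnegative. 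Hence \eqref{eq:hjb_lie} holds on all of $[0, T]$, and the preceding theorem yields the claim.

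The step I expect to require the most care is the bookkeeping at the switching time $t = T-\tau$, where the upper integration limit $\min(t+\tau, T)$ changes from $t+\tau$ to $T$ and the boundary term $\phi(t+\tau, z)$ is switched off by the indicator. The Leibniz-rule computation that produced \eqref{eq:v_dot} already accounts for this, so the remaining subtlety is purely the consistent matching of the dual-variable arguments $(x_0, x_1)$ to the current and delayed states in both time regimes. I would also remark that $\phi$ is defined only on $[0, T] \times X$, which is exactly why the integral in \eqref{eq:subvalue} is truncated at $T$ and why no evaluation of $\phi$ at a time exceeding $T$ is ever required.
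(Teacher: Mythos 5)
Your proposal is correct and follows exactly the paper's own argument: the paper likewise verifies \eqref{eq:hjb_term} from the terminal constraint \eqref{eq:u_cont_term} via the evaluation \eqref{eq:value_term}, and verifies \eqref{eq:hjb_lie} from the feasibility constraints \eqref{eq:u_cont_f} and \eqref{eq:u_cont_f_end} via the derivative expression \eqref{eq:v_dot}, invoking the preceding theorem to conclude. Your case split at the switching time $t = T-\tau$ and the identification $x_0 = z$, $x_1 = w(-\tau)$ simply spell out in detail the same term-matching the paper states in one line.
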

\begin{proof}
    The terminal constraint \eqref{eq:u_cont_term} satisfies \eqref{eq:hjb_term} given the terminal value evaluation in \eqref{eq:value_term}. The combination of \eqref{eq:u_cont_f} and \eqref{eq:u_cont_f_end} together satisfy \eqref{eq:hjb_lie} under the derivative value expression in \eqref{eq:v_dot}.
\end{proof}

% \urg{A more complete expression would involve coinvariant derivatives with respect to $(t, w)$ in the $\phi$ expressions, see \cite{plaksin2020minimax} for more details. 

% The coinvariant derivative of $V$ is,
% \begin{align}
% \label{eq:coinvariant}
%     \partial_{t,w}^{ci} \phi(t, z, w(\cdot)) = q(t, z) - q(t - \tau, w(-\tau)).
% \end{align}

% Note that the argument of the first term in \eqref{eq:coinvariant} is $z$ rather than $w(0)$.

% I believe that the  \eqref{eq:v_dot} should remain accurate.
% }

\subsubsection{Continuity of the Recovered Value Functional}
\label{sec:continuity_subvalue}
Let $u^*(t) \in \mathcal{U}$ be the optimal (infimizing) trajectory of problem \eqref{eq:u_traj} given $z, w(\cdot)$, inducing a controlled trajectory $x^*(t) = x(t \mid z, w(\cdot))$.
The value functional evaluated along the optimal trajectory $V^*(t) = V(t, x^*(t), x^*([t-\tau, t])$ is $C^0$-continuous in $t$, but is not necessarily $C^1$-continuous in $t$. At the time $t=T-\tau$, define the value functional evaluations
\begin{align}
    V_-^* &= \lim_{t \rightarrow (T - \tau)^-} V^*(t) & V_+^* &= \lim_{t \rightarrow (T - \tau)^+} V^*(t).
\end{align}

The difference in these evaluations for every lag $i^*$ is
\begin{align}
   \Delta V^* = V_-^*-V_+^* &= \left(\int_{T - \tau^-}^{T^-} - \int_{T - \tau^+}^{T^+}\right) \phi(s, x(s-\tau_{i})) ds = 0.
\end{align}

The value functional $V^*(t)$ is therefore a member of $C^0([0, T])$ given that $v \in C^1([0, T] \times X)$ and $\phi \in C^0([0, T] \times X)$.

The value functional derivative evaluations are
\begin{align}
    \dot{V}_-^* &= \lim_{t \rightarrow (T - \tau)^-} \dot{V}^*(t) & \dot{V}_+^* &= \lim_{t \rightarrow (T - \tau_{i})^+} \dot{V}^*(t).
\end{align}

The difference in the derivative evaluations on both sides of $t=T-\tau$ is
\begin{align}
   \Delta \dot{V}^* = \dot{V}_-^*-\dot{V}_+^* &= \phi(T, x(T -\tau)).
\end{align}

It is not guaranteed that $\phi(T, x(T -\tau)) = 0$, so the value functional $V^*$ may have discontinuous first derivatives. The value functional is therefore $C^0$ in time along trajectories, and fails to be $C^1$ at the time $t=T-\tau$.

We form an additional conjecture to \ref{conj:delay} in the \ac{OCP} case based on the tightness conditions in \cite{lewis1980relaxation}.
\begin{conj}
Assume for the purposes of this conjecture that:
\begin{itemize}
    \item[A1'] The sets $\{[-\tau, T], X, X_T, U\}$ are all compact.
    \item[A2'] The costs $J, J_T$ are continuous.
    \item[A3'] The dynamics $f(t, x_0, x_1, u)$ are Lipschitz in their arguments.
    \item[A4'] The history $x_h$ is inside $PC([-\tau, 0], X)$.
    \item[A5'] The image of $f(t, x_0, X, U)$ is convex for each fixed $(t, x_0)$.
    \item[A6'] The mapping $v \mapsto \inf_{u \in U} J(t, x_0, x_1, u): f(t, x_0, x_1, u) = v$ is convex in $v \in \R^n$.
\end{itemize}
Then there is no relaxation gap between \eqref{eq:u_traj} and \eqref{eq:u_cont} ($P^*=p^*$).
\end{conj}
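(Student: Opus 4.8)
The plan is to prove the conjecture by establishing the two inequalities $p^* \le P^*$ and $P^* \le p^*$ between the trajectory problem \eqref{eq:u_traj} and the measure relaxation \eqref{eq:u_meas}; since strong duality between \eqref{eq:u_meas} and \eqref{eq:u_cont} is already in hand, this yields $P^* = p^* = d^*$. The relaxation direction $p^* \le P^*$ is the routine one: any admissible control $u(\cdot)$ together with its induced trajectory $x(\cdot \mid x_h)$ generates, exactly as in the proof of Theorem \ref{thm:delay_upper_bound}, a feasible tuple $(\mu_0, \mu_p, \bmu_0, \bmu_1)$ whose objective equals the trajectory cost, so the infimum over measures cannot exceed $P^*$. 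All the content lies in the reverse inequality $P^* \le p^*$, which requires synthesizing a genuine admissible control of \eqref{eq:u_traj} whose cost is at most the optimal measure value.

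For that synthesis I would adapt the template of Lewis and Vinter \cite{lewis1980relaxation} to the delay setting, organized along the \emph{method of steps}. Fix a minimizing (or optimal, if attained) tuple and disintegrate the joint occupation measure $\bmu = \bmu_0 + \bmu_1$ in time: testing the Liouville constraint against $v(t,x)=g(t)$ for arbitrary smooth $g$ forces $\pi^t_\# \bmu = \lambda_{[0,T]}$, so the disintegration $\bmu = \int_0^T \bmu_t\,dt$ into probability measures on $X^2 \times U$ is well defined. On the first interval $[0,\tau]$ the consistency constraint pins the $x_1$-marginal to the shifted history occupation measure $\mu_{x_h(\cdot)}$, so the delayed coordinate is a \emph{known} external signal and \eqref{eq:u_traj} restricted to $[0,\tau]$ is an ordinary ODE optimal control problem; the convexity hypotheses A5' and A6' then permit the classical tightness argument (Filippov-type measurable selection of the velocity, convexity of the cost-velocity relation to forbid any decrease of cost under relaxation) to recover a genuine control and trajectory on $[0,\tau]$ realizing the relaxed cost. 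Iterating, once the trajectory is reconstructed on $[0,k\tau]$ the delayed signal on $[k\tau,(k+1)\tau]$ is determined, reducing each subsequent step to the same ODE problem; stitching the finitely many ($\lceil T/\tau\rceil$) pieces produces an admissible control of \eqref{eq:u_traj} with cost $\le p^*$ (the terminal requirement $x(T)\in X_T$ being inherited from the support of the terminal measure on the last step).

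The step I expect to be the genuine obstacle is the upgrade from \emph{marginal} consistency to \emph{joint} consistency. The constraint in \eqref{eq:u_meas} equates only the pushforwards $\pi^{t x_1}_\#(\bmu_0+\bmu_1)$ and $S^\tau_\#(\mu_{x_h(\cdot)} + \pi^{t x_0}_\# \bmu_0)$; it constrains the $x_1$-coordinate of $\bmu$ to carry the correct \emph{averaged} distribution of delayed states, but not to equal, time-by-time, the pointwise delay $x_0(\cdot-\tau)$ of the reconstructed trajectory. Closing this gap — showing that an optimal measure may be taken supported on the graph of the genuine delay relation without raising the cost — is precisely where the single-delay assumption is indispensable: with one delay the shift $S^\tau$ is injective and the relaxation obstruction of Rosenblueth \cite{rosenblueth1991relaxation, rosenblueth1992proper} is absent, whereas with two or more incommensurate delays the averaged couplings genuinely fail to arise from trajectories and the gap can be strict. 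I would therefore concentrate the technical effort on a disintegration-plus-selection argument that respects the temporal shift, converting the marginal identity into a joint measure on the delay graph; the convexity assumptions A5'-A6' and the finiteness of the number of steps are what should make such a selection possible in the single-delay regime.
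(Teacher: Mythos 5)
First, a point of order: the statement you were asked to prove is stated in the paper as a \emph{conjecture} with no proof supplied --- the paper explicitly defers the analogous Conjecture \ref{conj:delay} to ongoing work and offers this OCP variant only as a further conjecture ``based on the tightness conditions in \cite{lewis1980relaxation}.'' So there is no paper proof to compare against, and the question is whether your sketch actually closes the problem. It does not, and the unresolved step is exactly the one you flag in your final paragraph and then defer. The easy half of your plan is fine: every admissible control yields a feasible tuple for \eqref{eq:u_meas} with matching cost, giving $p^* \le P^*$, and strong duality between \eqref{eq:u_meas} and \eqref{eq:u_cont} follows the pattern of Appendix \ref{app:strong_duality_delay}. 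Your time-disintegration observation is also correct, and on $[0,\tau]$ the consistency constraint does pin $x_1$ \emph{pointwise} to $x_h(t-\tau)$, since the occupation measure of the single given history disintegrates into Dirac masses on its graph; the first step is therefore a genuine ODE subproblem to which the Lewis--Vinter machinery applies.

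The method-of-steps iteration, however, breaks at the second step, and this failure is the whole conjecture rather than a technicality. For $t \in [\tau, 2\tau]$ the consistency constraint equates the $x_1$-marginal of $\bmu_t$ with the $x_0$-marginal of $\bmu_{t-\tau}$ --- the relaxed measure's \emph{own} past marginal --- not with the delayed evaluation of whatever trajectory you reconstructed on $[0,\tau]$. Lewis--Vinter gives equality of \emph{values} for the first-step subproblem; it does not give concentration of an optimal $\bmu$ on a single trajectory (optimal occupation measures may be mixtures even in the ODE case, where this is harmless because each branch of the mixture is separately admissible). In the delay setting mixtures are precisely not harmless: the marginal identity allows $\bmu_t$ to correlate the coordinate $x_1$ with the past in ways no single trajectory realizes, and substituting your reconstructed trajectory's occupation for the measure's marginal changes the feasible set and possibly the value, so the reduction to ``ODE problem with known exogenous delayed signal'' is unavailable beyond $[0,\tau]$. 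Nothing in A5'--A6' obviously repairs this: convexity of the velocity set and of the cost-velocity map controls relaxation in the $(x_1, u)$ ``control'' at fixed $(t, x_0)$, but does not forbid cost-lowering cross-time correlations that marginal consistency permits. Finally, the injectivity of $S^\tau$ is a red herring --- the Rosenblueth obstruction \cite{rosenblueth1991relaxation, rosenblueth1992proper} concerns the compatibility of several coupled marginal constraints, and injectivity of the shift does not convert a marginal identity into the pointwise one your selection argument would need. What you have is a plausible reduction of the conjecture to a concentration/selection statement for optimal relaxed measures, not a proof of it.
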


\subsection{Approximate Recovery}
A control policy $u(t)$ may be extracted from the value functional $V$ through the trajectory condition \eqref{eq:hjb_traj_true} by
\begin{subequations}
\begin{align}
    \label{eq:u_recovery}
    u(t) &= \argmin_u  \Lie_f v(t, x(t)) +  I_{[0, T-\tau]}(t) \phi(t+\tau, x(t))* - \phi(t, x(t - \tau)) + J(t, x(t), u(t))\\
    &=\argmin_u  f(t, x(t),  x(t - \tau), u) \cdot \nabla_{x} 
    v(t, x(t)) + J(t, x(t), u).
\end{align}
\end{subequations}

The work in \cite{jones2021polynomial} quantifies performance bounds of polynomial value function approximations for \ac{ODE} systems in terms of $W^1$ Sobolev norms away from the true value function. Quantifying performance bounds of the extracted controller of \eqref{eq:u_recovery} is an open problem.

\subsection{Example of Optimal Control}
\label{sec:u_example}

An example of optimal control is presented on the one-dimensional linear system:
\begin{align}
x'(t) &= -3x(t) - 5x(t-0.25) + u & &\forall t \in [0, 1] \nonumber \\
x_h(t) &= -1 & & \forall t \in [-0.25, 0].
\end{align}
This system has one lag with $\tau = 0.25$ and a time horizon of $T = 1$. The state and control constraints are $X = [-1, 1]$ and $U = [-1, 1]$. With a control weight of $R = 0.01$, the penalties are
\begin{equation}
    J(t, x, u) = 0.5 x^2 + 0.5 R u^2 \qquad J_T(x) = 0.\end{equation}

The open loop total cost is $0.0674$. Table \ref{tab:linear_bounds} lists optimal control value approximations for this system.
\begin{table}[h]
    \centering
    \caption{\label{tab:linear_bounds}\ac{SDP} approximation bounds to program \eqref{eq:u_meas}}
    \begin{tabular}{c|c c c c c c}
        order&	1&	2	&3	&4	&5	&6  \\
        bound	&7.90E-05	&0.0322&	0.0386&	0.0391	&0.0393	&0.0393 
    \end{tabular}
\end{table}

The applied control $u(t)$ may be recovered through equation \eqref{eq:u_recovery} as

\begin{equation}
    u(t) = \text{Saturate}_{[-1,1]} \left( -\frac{1}{R}\partial_x v(t,x(t))  \right).
\end{equation}

The trajectories and nonnegative functions are plotted for order 4 in Figures \ref{fig:traj}-\ref{fig:aux_value}. The order 4 control bound is 0.0391, and the cost evaluated along the controlled trajectory is 0.0394. 
% \urg{The bounds in Gloptipoly and Yalmip are the same, and Gloptipoly runs much faster. The functions $(v, \phi)$ returned by Gloptipoly are invalid, and I don't yet know why. Constraint \eqref{eq:u_cont_flow} sometimes takes on negative or large positive values in the Gloptipoly solution, but the value function seems accurate. Figures \ref{fig:traj}-\ref{fig:nonneg} were generated in Yalmip. I will need to figure out the dual recovery bug in my Gloptipoly implementation, and will separately need write Yalmip code to generically solve time-delay \ac{OCP}s. I need to demonstrate on nonlinear systems with state and control constraints, which is particularly challenging for other methods (likely, also need to find out the literature and what is state-of-the-art).}

\begin{figure}[h]
    \centering
    \includegraphics[width=0.7\linewidth]{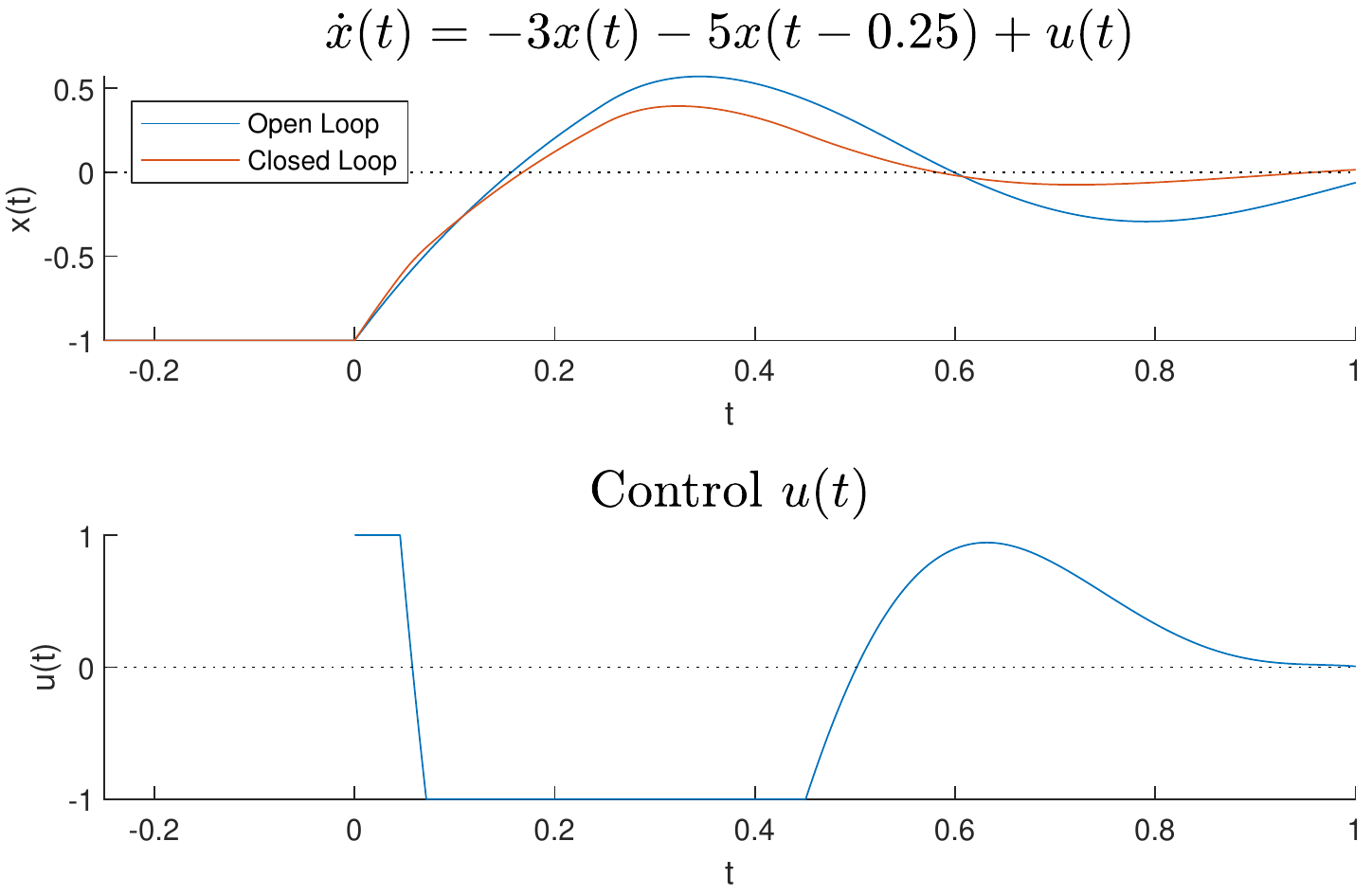}
    \caption{Open and closed loop trajectory with control}
    \label{fig:traj}
\end{figure}

\begin{figure}[!h]
    \centering
    \includegraphics[width=\linewidth]{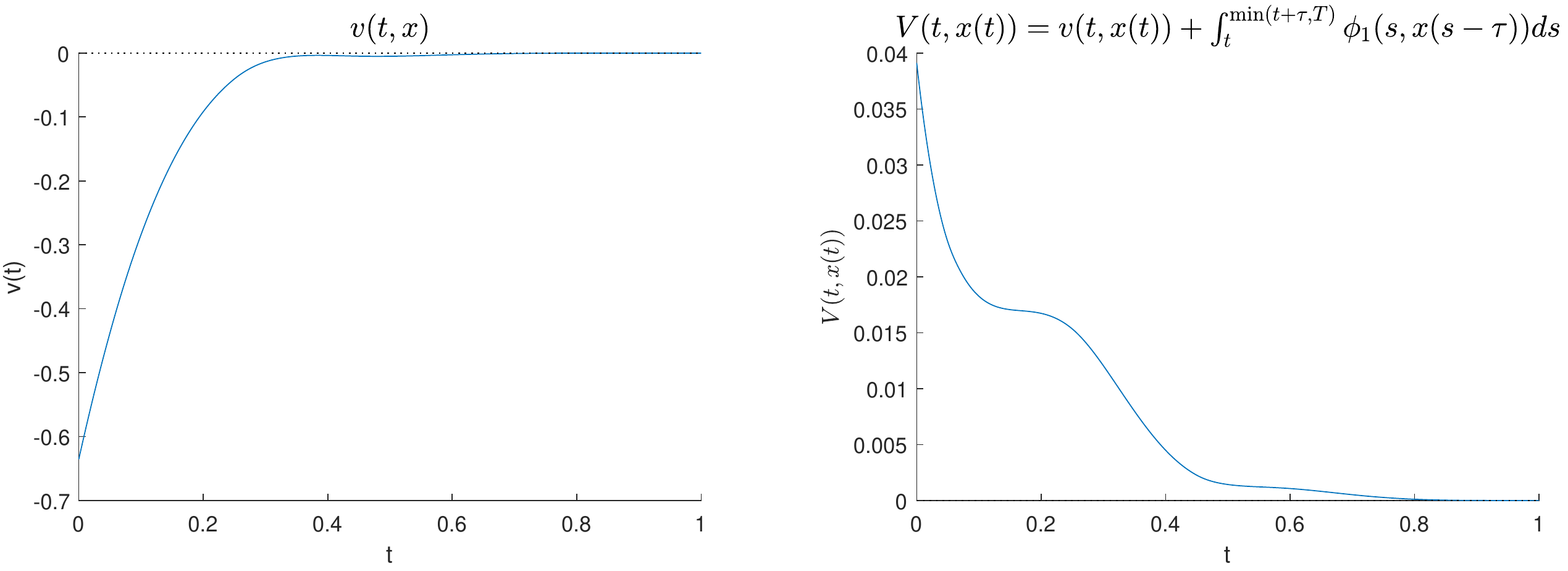}
    \caption{Auxiliary function $v$ and value functional $V$ from \eqref{eq:subvalue}}
    \label{fig:aux_value}
\end{figure}

\section{Improved Accuracy of Control Problems}
\label{app:more_accurate}

This appendix lays out methods to reduce the conservatism of \ac{DDE} \acp{OCP} from appendix \ref{app:ocp} by adding new infintie-dimensional nonnegativity constraints. 

All approaches discussed in this appendix may be applied to peak estimation, but are presented here in the simplified fixed-terminal-time single-history \ac{OCP} setting.

\subsection{Spatial Partitioning}

The constraints \eqref{eq:u_cont_term}-\eqref{eq:u_cont_f_end} must hold in support sets defined by $[0, T] \times X \times U$. Assume that there exists a decomposition of the state  spaces $X = \cup_k X_k$ such that $\forall k: \textrm{dim}(X_k) = n$ and  $ \forall k, k': \textrm{int}(X_k \cap X_{k'}) = \varnothing$ (cells $X_k$ are full-dimensional and their intersections are not full dimensional). Further assume a similar decomposition exists for the control set 
 $U = \cup_\ell U_k$.
 
Let $(v_k(t, x), \phi_k(t,x))$ be functions associated with each space $X^k$. A space-control partition of \eqref{eq:u_cont} is:
\begin{subequations}
\label{eq:u_cont_space}
\begin{align}
    d^* = & \ \sup \quad \sum_k \left(I_{X_k}(x_h(0)) v_k(0, x_h(0)) \right)+ \textstyle\int_{-\tau}^{0} \phi(t + \tau, x_h(t))  dt\label{eq:u_cont_obj_space}& \\
    & \forall x \in X_k: \nonumber \\
    & \qquad J_T(x) - v_k(T, x) \geq 0 \label{eq:u_cont_term_space}  \\
    & \forall (t, x_0, x_1 ,u) \in [0, T-\tau]  X_k \times X_{k'} \times U_\ell: \nonumber \\
    & \qquad \Lie_f v_k + J(t, x_0, u) -  \phi_{k'}(t, x_1)  +  \phi_k(t+\tau, x_0) \geq 0 \label{eq:u_cont_f_space}  \\
    & \forall (t, x_0, x_1 ,u) \in [T-\tau, T] \times X_k \times X_{k'} \times U_\ell: \nonumber \\        
    & \qquad  \Lie_f v_k + J(t, x_0, u) -  \phi_{k'}(t, x_1) \geq 0 \label{eq:u_cont_f_end_space} & &  \\
    & \forall (t, x) \in [0, T] \times (X_k \cap X_{k'}): \nonumber \\
    & \qquad v_k(t, x) = v_{k'}(t, x) \label{eq:u_cont_agreement_space} \\
    &\forall k: v_k \in C^1([0, T] \times X_k)  \label{eq:u_cont_v_space} \\
    &\forall k: \phi_k \in C([0, T] \times X_k)  \label{eq:u_cont_phi_space}.
\end{align}
\end{subequations}

The $v_k$ terms agree on boundary regions between state cells by \eqref{eq:u_cont_agreement_space}. The $\phi_k$ terms remain continuous (bounded measurable), but this partitioning has an impact when evaluating the finite-degree  \acp{SDP}.  
%The term $\phi(t, x)$ in the integral \eqref{eq:subvalue} is much more delicate, and I do not know if it can be incorporated into the spatial partition. Further details about this term will take place in the following temporal partition section.

\subsection{Temporal Partitioning}

We utilize the following lemma to provide conditions for temporal partitioning:
\begin{lem}
\label{lem:suff_int}
A sufficient condition for $\int_{t_0}^{t_1} g_1(t) dt \geq \int_{t_0}^{t_1} g_2(t) dt$ is that \begin{equation}
    \forall t \in [t_0, t_1]: g_1(t) \geq g_2(t).
\end{equation}
\end{lem}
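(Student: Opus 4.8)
The plan is to reduce the claim to the elementary monotonicity property of the integral. The idea is to form the pointwise difference $h(t) = g_1(t) - g_2(t)$ and show that its integral over $[t_0, t_1]$ is nonnegative, from which the stated inequality follows immediately by linearity.

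First I would set $h(t) = g_1(t) - g_2(t)$ and observe that the hypothesis $g_1(t) \geq g_2(t)$ for all $t \in [t_0, t_1]$ is precisely the statement that $h(t) \geq 0$ on $[t_0, t_1]$. Next I would invoke the order-preserving property of integration: the integral of a nonnegative function over an interval with $t_0 \leq t_1$ is itself nonnegative, so $\int_{t_0}^{t_1} h(t)\,dt \geq 0$. Finally, applying linearity of the integral gives
\begin{equation}
    \int_{t_0}^{t_1} g_1(t)\,dt - \int_{t_0}^{t_1} g_2(t)\,dt = \int_{t_0}^{t_1} h(t)\,dt \geq 0,
\end{equation}
which rearranges at once to the desired conclusion $\int_{t_0}^{t_1} g_1(t)\,dt \geq \int_{t_0}^{t_1} g_2(t)\,dt$.

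There is essentially no obstacle here, since the result is a direct consequence of the monotonicity of integration. The only points worth stating explicitly are that one assumes $t_0 \leq t_1$, so that the orientation of the integral does not flip the inequality, and that $g_1, g_2$ are integrable on $[t_0, t_1]$; both hold in the paper's setting because the relevant integrands (built from the continuous functions $v$ and $\phi$) are continuous. In the measure-theoretic language of the preliminaries one could equivalently remark that $h\,d\lambda_{[t_0, t_1]}$ defines a nonnegative measure, but the pointwise argument above is the most direct route and is all that is needed to support the temporal-partitioning constructions that follow.
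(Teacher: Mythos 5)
Your proof is correct and is exactly the standard monotonicity-of-integration argument; the paper itself states Lemma \ref{lem:suff_int} without proof, treating it as elementary, and your pointwise-difference argument (with the sensible explicit remarks that $t_0 \leq t_1$ and that the integrands are integrable) is precisely the reasoning the paper implicitly relies on.
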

Define the following time breaks (partition) arranged in sorted order as
\begin{equation}
    T_{break} = \{0, t_1, \ldots, t_{k-1}, t_k=T-\tau, t_{k+1}, \ldots, t_{k+\ell-1}, t_{k+\ell} = T\}.
\end{equation}

Let $[t_{b}, t_{b+1}]$ and $[t_{b-1}, t_{b}]$ be regions in $T_{break}$. The subvalue functionals from \eqref{eq:subvalue} defined in this region must satisfy 
\begin{subequations}
\label{eq:subvalue_break_decrease}
\begin{align}
    V_b(t_b, x, z(\cdot)) &\geq V_{b-1}(t_b, x, z(\cdot)) \\
    v_b(t_b, x) + \int_{t_b}^{\min(t_b+\tau, T)} \phi_b(s, z(s-\tau)) ds & \geq v_{b-1}(t_b, x) + \int_{t_b}^{\min(t_b+\tau, T)} \phi_{b-1}(s, z(s-\tau)) ds.
\end{align}
\end{subequations}

The sufficient condition in Lemma \ref{lem:suff_int} may be used to accomplish this relation in \eqref{eq:subvalue_break_decrease}, ensuring that the subvalue function will always decrease when traversing a time break:
\begin{subequations}
\label{eq:time_suff}
\begin{align}
    & v_b(t_b, x) \geq v_{b-1}(t_b, x) & & \forall x \in X\\
    & \phi_b(t, x) \geq \phi_{b-1}(t, x) & & \forall (t, x) \in [t_b, \min(t_b + \tau, T)] \times X.
\end{align}
\end{subequations}

The resultant time-partitioned \ac{LP} is,
\begin{subequations}
\label{eq:u_cont_time}
\begin{align}
    d^* = & \ \sup \quad v(0, x_h(0)) +\textstyle \textstyle\int_{-\tau}^{0} \phi_{k+\ell}(t + \tau, x_h(t)) dt\label{eq:u_cont_obj_time}& \\
    & \forall x \in X:  \nonumber \\
    &\qquad J_T(x) - v_{k+\ell}(T, x) \geq 0 \label{eq:u_cont_term_time} \\
    & \forall (t, x_0, x_1 ,u) \in [t_{k'}, t_{k'+1}] \times X^2 \times U, k'=0..k-1: \nonumber \\
    & \qquad  \Lie_f v_{k'} + J(t, x_0, u) -  \phi_{k'}(t, x_1)  +  \phi_{k'}(t+\tau, x_0) \geq 0 \label{eq:u_cont_f_time} \\
    &  \forall (t, x_0, x_1 ,u) \in [t_{k'}, t_{k'+1}] \times X^2 \times U, k'=k..k+\ell: \nonumber \\
    & \qquad \Lie_f v_{k'} + J(t, x_0, u) -  \phi_{k'}(t, x_1) \geq 0 \label{eq:u_cont_f_end_time} \\
    & \forall x \in X,  k'=1..k+\ell-1: \nonumber \\
    & \qquad  v_{k'}(t_{k'}, x) \leq v_{k'+1}(t_{k'}, x)  \\
    & \forall (t, x) \in [t_{k'}, \min(t_{k'} + \tau, T)] \times X, k'=1..k+\ell-1 \nonumber\\
    &  \qquad  \phi_{k'}(t, x) \leq \phi_{k'+1}(t, x)\\
    & \forall k'=0..k+\ell: \\
    &\qquad v_{k'} \in C^1([t_{k'}, t_{k'+1}] \times X) & & \label{eq:u_cont_v_time} \\
    &\qquad \phi_{k'} \in C([t_{k'}, \min(t_{k'+1}+\tau, T)] \times X).  \label{eq:u_cont_phi_time}
\end{align}
\end{subequations}

\subsection{Double Integral Functionals}

The subvalue functional in \eqref{eq:subvalue} has a single integral term for each delay. 
Some Lyapunov-Krasovskii or Barrier methods for \ac{DDE}  analysis employ double integrals, such as the following functional for a single delay $\tau$ \cite{papachristodoulou2005tutorial},
\begin{align}
    V(t, z, w) &= v(t, z) + \int_{t}^{\min(t+\tau, T)} \phi_i(s, w(s-\tau-t))ds \nonumber \\
    &+ \int_{t}^{\min(t+\tau, T)}\int_{-\tau}^{0}\psi(s, q, w(s-\tau-t), w(q))dq ds. \label{eq:subvalue_double}
\end{align}

The time derivative of \eqref{eq:subvalue_double} in the time span $t \in [0, T-\tau)$ is
\begin{align}
        \dot{V}(t,z,w) &= \Lie_f v(t, z) + \phi(t+\tau, w(0)) - \phi(t, w(-\tau))\\ &+\textstyle\int_{-\tau}^{0}\psi(t+\tau, q, w(0), w(q))dq \nonumber\\
    &-\textstyle\int_{-\tau}^{0}\psi(t, q, w(-\tau), w(q))dq, \label{eq:double_span_start}
\end{align}
and between $t \in (T-\tau, 0]$ is
\begin{align}
    \dot{V}(t,z,w) &= \Lie_f v(t, z)  - \phi(t, w(-\tau)) -\textstyle\int_{-\tau}^{0}\psi(t, q, w(-\tau), w(q))dq. \label{eq:double_span_end}
\end{align}

The derivative $\dot{V}$ has a discontinuity present at $t=T-\tau$, just as described in Section \eqref{sec:continuity_subvalue}.

A sufficient condition for the inequality \eqref{eq:hjb_lie} to be fulfilled is that the following functions associated with \eqref{eq:double_span_start} and  \eqref{eq:double_span_end}  (moving all terms under the $dq$ integral) are nonnegative:
\begin{align}
&\forall t\in [0, T-\tau], (x_0, x_1, \tilde{x}) \in X^3, q \in [-\tau, 0]: \nonumber \\
    &\qquad \tau^{-1}\left(\Lie_f v(t, z) + J(t, x_0, u) + \phi(t+\tau, x_0) - \phi(t, x_1) \right) \nonumber \\
    &\qquad + \psi(t+\tau, q, x_0, \tilde{x}) - \psi(t, q, x_0, \tilde{x}) \geq 0  \label{eq:double_nonneg_start} \\
&\forall t\in [T-\tau, T], (x_0, x_1, \tilde{x}) \in X^3, q : 
 \nonumber \\
    &\qquad \tau^{-1} \left(\Lie_f v(t, z) + J(t, x_0, u)  - \phi(t, x_1)\right) - \psi(t, q, x_0, \tilde{x}) \geq 0. \label{eq:double_nonneg_end}
\end{align}

Lemma \ref{lem:suff_int} is utilized to enforce nonnegativity of the integral terms in \eqref{eq:double_span_start} and \eqref{eq:double_span_end}. The $\tau^{-1}$ scale factor arises from placing a $q$-independent term (such as $J(t, x_0, u)$) inside the integral. The variable $q \in [-\tau, 0]$ is the integrated (swept) time, and $\tilde{x} \in X$ abstracts out the swept state $w(q)$. The dual formulation of constraints \eqref{eq:double_nonneg_start} and \eqref{eq:double_nonneg_end} involve occupation measures $\bar{\mu}_0 \in \Mp{[0, T-\tau] \times X^3 \times U}$ and $\bar{\mu}_1 \in \Mp{[T-\tau, T] \times X^3 \times U}$. This construction may be generalized to \acp{DDE} with $r$ delays by adding a double-integral term for each delay.

\section{Joint+Component Measure}
\label{app:joint_component}

This appendix details an alternate notion of \ac{MV}-solutions for \acp{DDE}. Solving peak estimation problems through these methods will return more conservative but quicker-executing 
 programs (computationally) as compared to the results in Section \ref{sec:peak_lp}.

\subsection{Measure Program}
The \ac{MV}-solution involves a joint occupation measure $\bmu$ and component measures $\omega_0, \omega_1$:
\begin{subequations}
\label{eq:weak_solution_jc}
    \begin{align}
        &\textrm{History}  & \mu_h &\in \Mp{H_0} \label{eq:weak_solution_jc_history} \\
       & \textrm{Initial} & \mu_0 &\in \Mp{X_0} \\
       &\textrm{Peak}  &\mu_p &\in \Mp{[0, T] \times X} \\
       &\textrm{Time-Slack} & \nu &\in \Mp{[0, T] \times X} \label{eq:weak_solution_jc_slack} \\
        &\textrm{Joint Occupation}  & \bmu &\in \Mp{[0, T] \times X^2} \label{eq:weak_solution_jc_occ} \\        
        &\textrm{Component Start }  & \omega_0 &\in \Mp{[0, T-\tau] \times X} \\
        &\textrm{Component End }  & \omega_1 &\in \Mp{[T-\tau, T] \times X}.
    \end{align}
\end{subequations}

The peak estimation \ac{LP} for the Joint+Component framework is
\begin{subequations}
\label{eq:peak_delay_jc_meas}
    \begin{align}
        p^* = & \ \sup \quad \inp{p}{\mu_p} \label{eq:peak_delay_jc_obj} \\
    & \inp{1}{\mu_0} = 1 \label{eq:peak_delay_jc_prob}\\    
    & \pi^{t}_\# \mu_h = \lambda_{[-\tau, 0]} \label{eq:peak_delay_jc_hist}\\   
    & \mu_p = \delta_0 \otimes\mu_0 + \pi^{t x_0}_\# \Lie_f^\dagger \bmu \label{eq:peak_delay_jc_flow}\\   
    & \pi^{t x_0}_\# \bmu = \omega_0 + \omega_1 \label{eq:peak_delay_jc_cons_start}\\ 
    & \pi^{t x_1}_\# \bmu + \nu = S^\tau_\#(\mu_h + \omega_0) \label{eq:peak_delay_jc_cons_end}\\ 
    & \textrm{Measure Definitions from  \eqref{eq:weak_solution_jc}.} \label{eq:peak_delay_jc_def}
    \end{align}
\end{subequations}

The history-validity and Liouville constraints in \eqref{eq:peak_delay_jc_meas} are the same as in \eqref{eq:peak_delay_meas} under the relation $\bmu = \bmu_0 + \bmu_1$. The consistency constraint in the Joint+Component formulation is split up into the pair \eqref{eq:peak_delay_jc_cons_start}-\eqref{eq:peak_delay_jc_cons_end}.

\begin{thm} 
Program \eqref{eq:peak_delay_jc_meas} returns an upper bound on \eqref{eq:peak_delay_traj}.
\end{thm}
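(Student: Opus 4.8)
The plan is to follow the constructive argument of Theorem \ref{thm:delay_upper_bound} essentially verbatim, adapting only the occupation-measure bookkeeping to the single joint measure $\bmu$ and its component marginals $(\omega_0, \omega_1)$. I would show that every feasible pair $(t^*, x_h)$ of \eqref{eq:peak_delay_traj} induces a feasible tuple $(\mu_h, \mu_0, \mu_p, \nu, \bmu, \omega_0, \omega_1)$ of \eqref{eq:peak_delay_jc_meas} whose objective equals $p(x(t^* \mid x_h))$; taking the supremum over all such candidates then yields $p^* \geq P^*$.

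First I would build the measures exactly as in the proof of Theorem \ref{thm:delay_upper_bound}. Writing $z(t) = (t, x(t \mid x_h), x(t-\tau \mid x_h))$ for the delay embedding, set $\mu_h$ to be the $[-\tau, 0]$ occupation measure of $t \mapsto (t, x(t))$, set $\mu_0 = \delta_{x_h(0^+)}$, and set $\mu_p = \delta_{t=t^*} \otimes \delta_{x = x(t^* \mid x_h)}$. Define the single joint measure $\bmu$ to be the $[0, t^*]$ occupation measure of $z(t)$, so that $\bmu \in \Mp{[0,T]\times X^2}$ by assumptions A1 and A3. The component measures are taken as the $(t, x_0)$ occupation measures of $t \mapsto (t, x(t))$ on the subintervals $[0, \min(t^*, T-\tau)]$ (for $\omega_0$) and $[\min(t^*, T-\tau), t^*]$ (for $\omega_1$, the zero measure when $t^* \leq T-\tau$), while $\nu$ is the occupation measure of $t \mapsto (t, x(t-\tau))$ on $[t^*, \min(T, t^*+\tau)]$.

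Next I would verify the constraints. The history-validity constraints \eqref{eq:peak_delay_jc_prob}--\eqref{eq:peak_delay_jc_hist} hold because $\mu_0$ is a unit Dirac and $\pi^t_\# \mu_h = \lambda_{[-\tau,0]}$ by construction, with support in $H_0$ provided by the graph-constrained assumption A5. The Liouville constraint \eqref{eq:peak_delay_jc_flow} is the fundamental-theorem-of-calculus identity $v(t^*, x(t^*)) - v(0, x_h(0^+)) = \int_0^{t^*} \Lie_f v(t, x(t), x(t-\tau))\, dt$ integrated against $C^1$ test functions, which is well posed since $v \in C^1$ implies $\Lie_f v \in C^0$ under A2. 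The first split consistency constraint \eqref{eq:peak_delay_jc_cons_start} is immediate: the sum $\omega_0 + \omega_1$ is precisely the $(t, x_0)$ occupation measure of the trajectory on $[0, t^*]$, which equals $\pi^{t x_0}_\# \bmu$.

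The only genuinely new verification is the second consistency constraint \eqref{eq:peak_delay_jc_cons_end}, and this is where I expect the main (though mild) difficulty. I would pair both sides against an arbitrary $\phi \in C([0,T]\times X)$ and invoke the change-of-variables identity of Lemma \ref{lem:consistency_free}. Pairing the left-hand side gives $\int_0^{t^*}\phi(t, x(t-\tau))\,dt + \int_{t^*}^{\min(T, t^*+\tau)}\phi(t, x(t-\tau))\,dt$, which Lemma \ref{lem:consistency_free} rewrites as $\int_{-\tau}^0 \phi(t'+\tau, x(t'))\,dt' + \int_0^{\min(t^*, T-\tau)}\phi(t'+\tau, x(t'))\,dt'$. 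Recognizing the first integral as $\inp{S^\tau \phi}{\mu_h}$ and the second as $\inp{S^\tau \phi}{\omega_0}$ (using that $\omega_0$ is supported on $[0, \min(t^*, T-\tau)]$) yields exactly $\inp{\phi}{S^\tau_\#(\mu_h + \omega_0)}$, as required. The care needed here is the case split at $t^* = T-\tau$, matching the $\bmu$ support windows against the $[0, T-\tau]$ and $[T-\tau, T]$ supports imposed on $\omega_0$ and $\omega_1$ in \eqref{eq:weak_solution_jc}; once the intervals are aligned, nonnegativity and support membership of every constructed measure follow directly, so the tuple is feasible and the bound $p^* \geq P^*$ is established.
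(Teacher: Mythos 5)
Your proposal is correct and takes essentially the same approach as the paper's proof: for each candidate $(t^*, x_h)$ you construct the same trajectory-induced tuple $(\mu_h, \mu_0, \mu_p, \nu, \bmu, \omega_0, \omega_1)$, with $\bmu$ the $[0, t^*]$ occupation measure of the delay embedding and $(\omega_0, \omega_1)$ splitting the $(t, x_0)$ occupation at $\min(t^*, T-\tau)$, and you verify feasibility via Lemma \ref{lem:consistency_free}. Your explicit constraint checks go slightly beyond the paper's terser argument, and your support window $[t^*, \min(T, t^*+\tau)]$ for $\nu$ is the correct one (the paper's second case writes $[T-\tau, T]$, a minor slip), but the route is the same.
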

\begin{proof}
Let $x_h \in \mathcal{H}$ be a history that generates the trajectory $x(t \mid x_h)$, and let $t^* \in [0, T]$ be a stopping time. 

Just as in Theorem \ref{thm:delay_upper_bound}, measures can be picked as $\mu_0 = \delta_{x = x_h(0^+ \mid x_h)}$, $\mu_p = \delta_{t=t^*} \otimes \delta_{x=x(t^* \mid x_h)}$, $\mu_h$ as the occupation measure of $x_\xi(t)$ in the times $[-\tau, 0]$, and $\bmu$ as the occupation measure of $z(t) = (x(t \mid x_h), x(t-\tau \mid x_h))$ in the times $[0, t^*]$.

When $t^* \in [0, T-\tau]$, then $\omega_0$ is the occupation measure of $x(t \mid x_h)$ in times $[0, t^*]$, $\omega_1$ is the zero measure, and $\nu$ is the occupation measure of $x(t-\tau \mid x_h)$ in times $[t^*, t^*+\tau]$. When $t^* \in (T-\tau, T]$, then $\omega_0$ is the occupation measure of $x(t \mid x_h)$  in the  times $[0, T-\tau]$, $\omega_1$ is the occupation measure of $x(t \mid x_h)$ in the times $[T-\tau, t^*$, and $\nu$ is the occupation measure of $x(t-\tau \mid x_h)$ in the times $[T-\tau, T]$.

All measures inside \eqref{eq:weak_solution_jc} have been defined for a valid trajectory, proving that \eqref{eq:peak_delay_jc_meas} upper-bounds \eqref{eq:peak_delay_traj}.
\end{proof}

\begin{thm}
    The Joint+Component measure program \eqref{eq:peak_delay_jc_meas} is also an upper bound on \eqref{eq:peak_delay_meas}.
\end{thm}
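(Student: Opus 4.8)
The plan is to show that the optimal value of \eqref{eq:peak_delay_jc_meas} is at least the optimal value of \eqref{eq:peak_delay_meas} by exhibiting a map that sends every feasible tuple of \eqref{eq:peak_delay_meas} to a feasible tuple of \eqref{eq:peak_delay_jc_meas} carrying the same objective value $\inp{p}{\mu_p}$. Since both programs maximize the identical linear objective and the construction preserves it, the supremum defining \eqref{eq:peak_delay_jc_meas} is no smaller than the supremum defining \eqref{eq:peak_delay_meas}, which is exactly the claim. This is the \emph{relaxation direction}, and it requires only forward verification of each constraint rather than any disintegration or recovery argument.

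Concretely, given a feasible $(\mu_h, \mu_0, \mu_p, \bmu_0, \bmu_1, \nu)$ of \eqref{eq:peak_delay_meas}, I would retain $\mu_h, \mu_0, \mu_p, \nu$ unchanged and define the new measures by
\[
    \bmu = \bmu_0 + \bmu_1, \qquad \omega_0 = \pi^{t x_0}_\# \bmu_0, \qquad \omega_1 = \pi^{t x_0}_\# \bmu_1.
\]
First I would check the support/definition constraints \eqref{eq:weak_solution_jc}: since $\bmu_0 \in \Mp{[0, T-\tau] \times X^2}$ and $\bmu_1 \in \Mp{[T-\tau, T] \times X^2}$ are both supported in $[0, T] \times X^2$, their sum satisfies $\bmu \in \Mp{[0, T] \times X^2}$; and because the projection-pushforward $\pi^{t x_0}_\#$ leaves the $t$-marginal support unchanged, one obtains $\omega_0 \in \Mp{[0, T-\tau] \times X}$ and $\omega_1 \in \Mp{[T-\tau, T] \times X}$ as required by \eqref{eq:weak_solution_jc}.

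The remaining constraint verification then proceeds termwise. The History-Validity constraints \eqref{eq:peak_delay_jc_prob}--\eqref{eq:peak_delay_jc_hist} are untouched and hold by feasibility of the original tuple. The Liouville constraint \eqref{eq:peak_delay_jc_flow} follows from \eqref{eq:peak_delay_flow} by linearity of $\Lie_f^\dagger$ and $\pi^{t x_0}_\#$, using $\bmu = \bmu_0 + \bmu_1$. The split consistency constraint \eqref{eq:peak_delay_jc_cons_start} is immediate from the definitions, $\pi^{t x_0}_\# \bmu = \pi^{t x_0}_\#(\bmu_0 + \bmu_1) = \omega_0 + \omega_1$, while \eqref{eq:peak_delay_jc_cons_end} is obtained by substituting $\bmu = \bmu_0 + \bmu_1$ and $\omega_0 = \pi^{t x_0}_\# \bmu_0$ into the original consistency constraint \eqref{eq:peak_delay_cons}.

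I expect essentially no obstacle in this direction; the only point requiring a moment of care is that $\omega_0$ and $\omega_1$ must respect the disjoint time windows $[0, T-\tau]$ and $[T-\tau, T]$, which is automatic since marginalizing out $x_1$ does not alter the time support of $\bmu_0$ or $\bmu_1$. To round out the discussion I would remark that the converse inclusion fails: a feasible tuple of \eqref{eq:peak_delay_jc_meas} need not arise from this construction, because a single joint measure $\bmu$ subject only to $\pi^{t x_0}_\# \bmu = \omega_0 + \omega_1$ does not encode a consistent split of the $x_1$-coordinate across the two time windows. This loss of structure is precisely why the Joint+Component program is the more conservative relaxation.
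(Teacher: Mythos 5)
Your proposal is correct and follows essentially the same route as the paper: the paper's proof uses the identical construction $\bmu = \bmu_0 + \bmu_1$, $\omega_0 = \pi^{t x_0}_\# \bmu_0$, $\omega_1 = \pi^{t x_0}_\# \bmu_1$ while keeping $(\mu_h, \mu_0, \mu_p, \nu)$ fixed, asserting feasibility for \eqref{eq:peak_delay_jc_meas}. Your writeup merely spells out the constraint-by-constraint verification and the support bookkeeping that the paper leaves implicit, which is fine.
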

\begin{proof}
    Let $(\mu_h, \mu_0, \mu_p, \nu, \bmu_0, \bmu_1)$ be a feasible set of measures for the constraints of \eqref{eq:peak_delay_meas}.

    After performing the following definitions,
    \begin{align}
        \bmu &= \bmu_0 + \bmu_1 & \omega_0 &= \pi^{t x_0} \bmu_0 & \omega_1 &= \pi^{t x_0} \bmu_1,
    \end{align}
    the measures $(\mu_h, \mu_0, \mu_p, \nu, \bmu, \omega_0, \omega_1)$ are feasible solutions for the constraints of \eqref{eq:peak_delay_jc_meas}.    
\end{proof}

Note how the Joint+Component \ac{MV}-solution involves only one measure involving $(t, x_0, x_1)$ together ($\bmu$ in \eqref{eq:weak_solution_jc_slack}), while the solution in \eqref{eq:weak_solution} has two measures ($\bmu_0, \bmu_1$). Application of the moment-\ac{SOS} hierarchy towards solving problems in \eqref{eq:weak_solution_jc_slack} result in only one Gram matrix of maximal size $\binom{1+2n+d}{d}$.

\subsection{Function Program}

The gap between \eqref{eq:peak_delay_jc_meas} and \eqref{eq:peak_delay_meas} can most easily be observed by examining the dual program of  \eqref{eq:peak_delay_jc_meas}:

\begin{subequations}
\label{eq:peak_delay_jc_cont}
\begin{align}
    d^* = & \ \inf_{\gamma \in \R} \gamma + \textstyle \int_{t=-\tau}^0 \xi(t) dt & & \\
    & \gamma \geq v(0, x)  & & \forall x \in  X_0 \label{eq:peak_delay_jc_cont_init}\\
    & v(t, x) \geq p(x) & & \forall (t, x) \in [0, T] \times X \label{eq:peak_delay_jc_cont_p} \\
    & \xi(t) + \phi_1(t+\tau, x) & & \forall (t, x) \in H_0 \\
    & 0 \geq \Lie_f v(t, x_0) + \phi_0(t, x_0) + \phi_1(t, x_1) & & \forall (t, x_0, x_1)\in [0, T] \times X^{2} \label{eq:jc_omega_Lie} \\
    & \phi_1(t, x) \leq 0 & & \forall (t, x) \in [0, T] \\ 
    & \phi_0(t, x) + \phi_1(t+\tau, x) \geq 0 & & \forall (t, x) \in [0, T-\tau] \times X \label{eq:jc_omega_0} \\
    & \phi_0(t, x)\geq 0 & & \forall (t, x) \in [T-\tau, T] \times X \label{eq:jc_omega_1} \\
    & v(t,x) \in C^1([0, T]\times X) \label{eq:peak_delay_jc_cont_v}& & \\
    &\phi_0(t, x), \phi_1(t, x) \in C([0, T] \times X)  \label{eq:peak_delay_jc_cont_phi} \\
    &\xi(t) \in C([-\tau, 0]).
\end{align}
\end{subequations}

Adding together \eqref{eq:jc_omega_Lie} and \eqref{eq:jc_omega_0} yields constraint \eqref{eq:peak_delay_cont_lie0} in $[0, T-\tau] \times X^2$. Similarly, the addition of \eqref{eq:jc_omega_Lie} and \eqref{eq:jc_omega_1} forms constraint \eqref{eq:peak_delay_cont_lie1}. The dual formulation in \eqref{eq:peak_delay_jc_cont} enforces nonnegativity of addends inside whole-terms of \eqref{eq:peak_delay_meas}. The constraints of \eqref{eq:peak_delay_jc_cont} are stricter than of \eqref{eq:peak_delay_cont}, resulting in a lowered infimum/upper bound on peak value.

\subsection{Joint+Component Example}

Table \eqref{tab:jc_compare_flow} compares moment-\ac{SOS} \acp{SDP} associated to programs \eqref{eq:peak_delay_meas} and \eqref{eq:peak_delay_jc_meas} for the delayed Flow example in Section \ref{ex:delay_flow}.

\begin{table}[h]
\caption{\label{tab:jc_compare_flow}Comparison of \eqref{eq:peak_delay_meas} and \eqref{eq:peak_delay_jc_meas} \ac{SDP} bounds for the delayed Flow system}
\centering
\begin{tabular}{llllll}
degree $d$               & 1    & 2      & 3      & 4    & 5  \\ \hline
Joint+Component \eqref{eq:peak_delay_jc_meas} & 1.25 & 1.223  & 1.1937 & 1.1751 & 1.1636\\
Standard \eqref{eq:peak_delay_meas}  & 1.25 & 1.2183 & 1.1913 & 1.1727 & 1.1630 
\end{tabular}
\end{table}

\begin{table}[h]
\caption{\label{tab:jc_compare_flow_time}Time (seconds) to obtain \ac{SDP} bounds in Table \ref{tab:jc_compare_flow}}
\centering
\begin{tabular}{llllll}
degree $d$  & 1    & 2      & 3      & 4    & 5  \\ \hline
Joint+Component \eqref{eq:peak_delay_jc_meas} & 0.782 & 0.991  & 5.271 & 31.885& 336.509\\
Standard \eqref{eq:peak_delay_meas}  & 0.937 & 1.190 & 9.508 & 105.777 & 552.496
\end{tabular}
\end{table}

\end{document}